\documentclass[oneside,12pt]{amsart}
\setlength{\vfuzz}{2mm} \setlength{\textwidth}{165mm}
\setlength{\textheight}{200mm} \setlength{\oddsidemargin}{0pt}
\setlength{\evensidemargin}{0pt}
\usepackage[all]{xy}
\usepackage{amsmath}
\usepackage{amsfonts}

\usepackage{amscd,amsmath,latexsym, amssymb}
\usepackage{verbatim}
\usepackage{xcolor}
\usepackage{tikz}
\usepackage{dsfont}
\usetikzlibrary{matrix}
\begin{document}

\newtheorem{conjecture}{Conjecture}
\newtheorem{thm}{Theorem}[section]
\newtheorem{cor}[thm]{Corollary}
\newtheorem{lem}[thm]{Lemma}
\newtheorem{prop}[thm]{Proposition}

\newtheorem{defin}[thm]{Definition}
\theoremstyle{definition}
\newtheorem{exm}[thm]{Example}
\newtheorem{ques}{Question}
\newtheorem{remark}[thm]{Remark}
\theoremstyle{remark}
\newtheorem*{rem}{Remark}
\newtheorem*{ack}{Acknowledgment}

%% User definitions:

\newcommand{\A}{{\mathcal A}}
\newcommand{\R}{{\mathbb R}}
\newcommand{\C}{{\mathbb C}}
\newcommand{\Z}{{\mathbb Z}}
\renewcommand{\baselinestretch}{1.15}

%%Martin definitions
\def\zk{Z(K; (\underline{X},\underline{A}))}
\def\zkh{\widehat{Z}(K;\xa)}
\def\xa{(\underline{X},\underline{A})}
\def\gt{ Grbi\'{c} and  Theriault \hspace{1 \jot}}
%\def\qedalone{\hspace*{1in}\hfill ${\rm {\bf q.}{\bf e.}{\bf d.}}$}
%%%%%%%%%%%
\def\hz{H^*(Z(K; (\underline{X},\underline{A})))}
\def\hzI{H^*(\widehat{Z}(K_I; (\underline{X},\underline{A})))}\numberwithin{equation}{section}

\setlength{\oddsidemargin}{.25in}
\setlength{\textwidth}{6in}

%%End Martin definitions

\title[Spectral Sequence] {A spectral sequence for polyhedral products}

\author[A.~Bahri]{A.~Bahri}
\address{Department of Mathematics,
Rider University, Lawrenceville, NJ 08648, U.S.A.}
\email{bahri@rider.edu}

\author[M.~Bendersky]{M.~Bendersky}
\address{Department of Mathematics
CUNY,  East 695 Park Avenue New York, NY 10065, U.S.A.}
\email{mbenders@hunter.cuny.edu}

\author[F.~R.~Cohen]{F.~R.~Cohen}
\address{Department of Mathematics,
University of Rochester, Rochester, NY 14625, U.S.A.}
\email{cohf@math.rochester.edu}

\author[S.~Gitler]{S.~Gitler}

\subjclass[2010]{Primary: 55T05, 55T25, 55N10, 55N35,  \/ Secondary: 52C35}

%\keywords{arrangements, cohomology ring, moment-angle complex,
%simplicial complex, simplicial sets, stable splittings,
%Stanley-Reisner ring, suspensions, toric varieties}
\keywords{polyhedral products, cohomology, spectral sequences, stable splittings, moment-angle complexes,
simplicial complexes.}

\maketitle

{\bf This paper is dedicated to Samuel Gitler Hammer who brought us much joy and interest in Mathematics } 
\begin{abstract}

The purpose of this paper is to exhibit fine structure for polyhedral products $Z(K; (\underline{X},\underline{A}))$,
and polyhedral smash products $\widehat{Z}(K; (\underline{X},\underline{A}))$.
%\cite{bbcg, bbcg2}
 Moment-angle complexes are special cases for which $(X,A) = (D^2,S^1)$
 % \cite{buchstaber.panov}.
  There are three main parts to this paper.

\begin{enumerate}
\item One part  gives a natural filtration of the polyhedral product together with properties
of the resulting spectral sequence in Theorem \ref{thm:spectral sequence}. Applications of this
spectral sequence are given.

\item The second part uses the first to give a homological decomposition of $\widehat{Z}(K; (\underline{X},\underline{A}))$
CW pairs $(\underline{X},\underline{A})$. 
\item Applications to the ring structure of $Z(K; (\underline{X},\underline{A}))$
are given for  CW-pairs $(X,A)$ satisfying suitable freeness conditions.

\end{enumerate}

\

\end{abstract}

% \maketitle

%{\bf This paper is dedicated to Samuel Gitler Hammer who brought us much joy and interest in Mathematics } 

\section{\bf {Introduction}}\label{Introduction}

The subject of this paper is the homology of polyhedral products $Z(K; (\underline{X},\underline{A}))$, and polyhedral smash products $\widehat{Z}(K; (\underline{X},\underline{A}))$ \cite{davis.jan,buchstaber.panov,bbcg, bbcg2, bbcg3}.
Definitions are listed in section \ref{definitions} of this paper.

One of the purposes of this article is to give the Hilbert-Poincar\'e series for the polyhedral product
$Z(K; (\underline{X},\underline{A}))$ in terms of

\begin{enumerate}
\item the kernel, image, and cokernel of the induced maps $$H^*(X_i) \to H^*(A_i)$$ for all $i$, and

\item the full sub-complexes of $K$.

\end{enumerate}

\

This computation was also worked out in \cite{cartan}  using more geometric methods.

\

This is achieved by  analysis of a spectral sequence abutting to the cohomology of the
polyhedral product $Z(K; (\underline{X},\underline{A}))$ by filtering this space with the left-lexicographical ordering
of simplices.    The method applies to a generalized multiplicative cohomology theory, $h^*$ as well.  The spectral sequence is then used to describe some features of the ring structure of $h^*(\zk)$.

\

Qibing Zheng \cite{zheng} gives an alternative description of the cohomology of a polyhedral product.
Our methods are distinct from his and the presentation of the computational results assumes
a different form. Unlike the spectral sequence developed here, his collapses at the $E_2$ term.

\section{\bf {Definitions, and main results}}\label{definitions}

The basic constructions addressed in this article are defined in this section.
First recall the definition of an abstract simplicial complex.

\begin{defin}\label{defin: simplicial complex}

\begin{enumerate}

\item  Let $K$ denote an abstract simplicial complex with $m$ vertices labeled by the set $[m]=\{1,2,\ldots, m\}$. Thus, $K$ is a subset of the power set of $[m]$ such that
an element given by a $(k-1)$-simplex $\sigma$ of $K$ is given by an ordered sequence $\sigma = (i_1,\ldots, i_k)$ with $1 \leq i_1 <\cdots < i_k \leq m$ such that if $\tau \subset \sigma$, then
$\tau$ is a simplex of $K$. In particular the empty set $\emptyset$ is a subset of $\sigma$ and so it is in $K$. 
The vertex set of $\sigma$, $\{i_1,\ldots, i_k \}$ will be denoted $|\sigma|$.

\item Given a sequence  $I = (i_1, \ldots, i_k)$ with $1 \leq i_1 <\ldots <
i_k \leq m $, define $K_I \subseteq K$ to be the {\it full
sub-complex } of $K$ consisting of all simplices of $K$ which have
all of their vertices in $I$, that is $K_I = \{\sigma \cap I \ | \
\sigma \in K\}.$

\item  In case $I = (i_1, \ldots, i_k)$, define $X^I = X_{i_1} \times X_{i_2} \times \ldots \times X_{i_k}.$

\item Let $\Delta[m-1]$ denote the abstract simplicial complex given
by the power set of $[m]=\{1,2,\ldots, m\}$.

\end{enumerate}
\end{defin}

Let $h^*$ be a generalized, multiplicative cohomology theory and $(\underline{X},\underline{A})$ denote a collection of based CW pairs  $\{(X_i, A_i,x_i)\}^m_{i=1}$.  We will also assume $h^*(X_i)$ and $h^*(A_i)$ are finite type, i.e. $h^*(X_i)$ and $h^*(A_i)$ are  generated as  $h^*$ modules by classes, $\{x_{\ell}\}$ and $\{a_{\ell}\}$ respectively  with finitely many generators in each degree.

\

For a generalized cohomology theory, $h^*$, we now describe  a strong freeness condition on $\xa$  that will be imposed in Section \ref{sec:spectral sequence}.

\

The strong freeness condition assumes that  the long exact sequence

$$  \overset{\delta}{\to} \widetilde{h}^*(X_i/A_i) \overset{\ell}{\to} h^*(X_i) \overset{\iota}{\to} h^*(A_i)  \overset{\delta}{\to} \widetilde{h}^{*+1}(X_i/A_i) \to $$

\

can be written in terms of explicit, free $h^*$ modules $E_i, B_i, C_i$ and $W_i$.

\

\begin{defin} \label{def:bas}

The pair $\xa$ is said to satisfy a strong $h^*$ freeness condition if there are free $h^*$-modules $E_i, B_i, C_i$ and  $W_i$  satisfying

\begin{enumerate}
\item
$ h^*(A_i)= E_i\oplus B_i $   $ (B_i \ni 1 \subset  h^0(A_i)).$

\item $h^*(X_i)= B_i \oplus C_i$
\newline where $B_i \underset{\simeq}{\overset{\iota}{\to}} B_i, \quad C_i \overset{\iota}{\mapsto} 0$

\item $ \widetilde{h}^*(X_i/A_i)=C_i \oplus W_i$.
\newline where $C_i \underset{\simeq}{\overset{\ell}{\to}} C_i, \quad  E_i \underset{\simeq}{\overset{\delta}{\to}} W_i \overset{\ell}{\mapsto} 0$
\end{enumerate}

	\end{defin}      

\

The goal of the spectral sequence is to compute the cohomology of the polyhedral product defined below.  Our answer will be given in terms of the strong $h^*$ free decomposition described in Definition \ref{def:bas}.  In particular the description of the cohomology is only natural with respect to mappings of $h^*(X_i)$ and $h^*(A_i)$ which preserve the chosen strong $h^*$ decomposition.  This point is further developed at the end of section \ref{revisited}.

In the following definition $\mathcal{K}$ denotes the category of simplicial complexes and $\mathcal{CW}_* $ is the category of based CW pairs.

\begin{defin}\label{defin:gmac}

\begin{enumerate}
\item The {\it polyhedral product } determined by
$(\underline{X},\underline{A})$  and $K$ denoted
$$Z(K;(\underline{X},\underline{A}))$$ is defined using the functor $$D: \mathcal{K} \to
\mathcal{CW}_{\ast}$$ as follows: For every $\sigma$ in $K$, let
$$
D(\sigma) =\prod^m_{i=1}Y_i,\quad {\rm where}\quad
Y_i=\left\{\begin{array}{lcl}
X_i &{\rm if} & i\in \sigma\\
A_i &{\rm if} & i\in [m]-\sigma
\end{array}\right.$$ with $D(\emptyset) = A_1 \times \ldots \times A_k$.

\item The  polyhedral product is $$Z(K;(\underline{X},\underline{A}))=\bigcup_{\sigma \in K}
D(\sigma)= \mbox{colim} D(\sigma)$$ where the colimit is defined by
the inclusions, $d_{\sigma,\tau}$ with $\sigma \subset \tau$ and
$D(\sigma)$ is topologized as a subspace of the product $X_1 \times
\ldots \times X_k$. The {\it polyhedral product} is the underlying space
$Z(K;(\underline{X},\underline{A}))$ with base-point $\underline{*}
= (x_1, \ldots, x_k) \in Z(K;(\underline{X},\underline{A}))$.

\item In the
special case where $X_i = X$ and $A_i = A$ for all $1 \leq i \leq
m$, it is convenient to denote the polyhedral product
by $Z(K;(X,A))$ to coincide with the notation in
\cite{denham.suciu}.

\end{enumerate}
\end{defin}

A direct variation of the structure of the polyhedral product follows next.
Spaces analogous to polyhedral products are given next where products of spaces are
replaced by smash products, a setting in which non-degenerate  base-points are required.   We will always assume the pairs $(X,A)$ are based CW pairs, in which case the base point condition is always satisfied.

The (reduced) suspension of a (pointed) space $(X,*)$
$$\Sigma(X)$$ is the smash product $$S^1 \wedge X.$$

\begin{defin}\label{defin:smash.product.moment.angle.complex}
Given a polyhedral product
$Z(K;(\underline{X},\underline{A}))$ obtained from
$(\underline{X},\underline{A}, \underline{*})$, the {\it polyhedral
smash product}
$$\widehat{Z}(K;(\underline{X},\underline{A}))$$ is defined to be the image of
$Z(K;(\underline{X},\underline{A}))$ in the smash product $X_1
\wedge X_2 \wedge \ldots \wedge X_k$.

The image of $D(\sigma)$ in $\widehat{Z}(K;
(\underline{X},\underline{A}))$ is denoted by $\widehat{D}(\sigma)$
and is $$Y_1 \wedge Y_2 \wedge \ldots \wedge Y_k$$ where
$$Y_i=\left\{\begin{array}{lcl}
X_i &{\rm if} & i\in \sigma\\
A_i &{\rm if} & i\in [m]-\sigma.
\end{array}\right.$$

\

In case it is important to distinguish the pair $(\underline{X},\underline{A})$, the notations
$D(\sigma;(\underline{X},\underline{A}, \underline{*}))$, and $\widehat{D}(\sigma;(\underline{X},\underline{A}))$
will be used.

\end{defin}

As in the case of $Z(K;(\underline{X},\underline{A}))$, note that
$\widehat{Z}(K;(\underline{X},\underline{A}))$ is the colimit
obtained from the spaces $ \widehat{D}(\sigma) .$

\

\begin{defin}\label{defin:smash.products}
Consider an ordered sequence $I = (i_1, \ldots, i_k)$ with $1 \leq
i_1 <\ldots < i_k \leq m$ together with pointed spaces $Y_1, \ldots,
Y_m$. Then
\begin{enumerate}
\item the length of $I$ is $|I|= k$,
\item the notation $I \subseteq [m]$ means
$I$ is any increasing subsequence of $(1,\ldots, m)$,
\item $Y^{[m]}=Y_1 \times \ldots \times Y_m,$
\item $Y^{I} = Y_{i_1} \times Y_{i_2} \times \ldots \times
Y_{i_k},$
\item $\widehat{Y}^{I} = Y_{i_1} \wedge \ldots \wedge Y_{i_k},$
\end{enumerate}
\end{defin}
%%%%%%%%%%%%%%%%%%%%%%

Given a sequence  $I = (i_1, \ldots, i_k)$ with $1 \leq i_1 <\ldots < i_k \leq m $, define $K_I \subseteq K$ to be the {\it full
sub-complex } of $K$ consisting of all simplices of $K$ which have
all of their vertices in $I$, that is $K_I = \{\sigma \cap I \ | \
\sigma \in K\}.$ This notation is used for the first decomposition proven in \cite{bbcg, bbcg2} stated next.

\begin{thm} \label{thm:decompositions.for.general.moment.angle.complexes}
Let $K$ be an abstract simplicial complex with $m$ vertices. Given
$(\underline{X},\underline{A}) =\{(X_i, A_i, x_i)\}^m_{i=1}$ where
$(X_i,A_i,x_i)$ are pointed triples of CW-complexes
%for which \color{red}$X_i$, and $A_i$ are connected for all $i$,

there is  a natural pointed homotopy
equivalence
$$H: \Sigma(Z(K;(\underline{X},\underline{A})))\to \Sigma(\bigvee_{I \subseteq [m]}
\widehat{Z}(K_I;(\underline{X_I},\underline{A_I}))).$$
\end{thm}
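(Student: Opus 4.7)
The plan is to reduce the statement to the classical James--Milnor stable splitting
$$\Sigma(Y_1 \times \cdots \times Y_m) \;\simeq\; \Sigma \bigvee_{I \subseteq [m]} \widehat{Y}^{I}$$
applied pointwise over the diagram whose colimit defines $Z(K;(\underline{X},\underline{A}))$. Since every face inclusion $d_{\sigma,\tau}: D(\sigma) \hookrightarrow D(\tau)$ is a coordinatewise based map of products of based CW pairs, it is a closed cofibration, and hence the ordinary colimit $\operatorname{colim}_{\sigma \in K} D(\sigma)$ agrees with the homotopy colimit. This allows me to commute $\Sigma$ and the wedge with the colimit without loss of homotopy information.

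The first step is to apply the James--Milnor splitting to each $D(\sigma) = \prod_{i \in [m]} Y_i$, obtaining
$$\Sigma D(\sigma) \;\simeq\; \Sigma \bigvee_{I \subseteq [m]} \widehat{D}_I(\sigma), \qquad \widehat{D}_I(\sigma) \;=\; \bigwedge_{i \in I} Y_i,$$
where as in Definition \ref{defin:gmac} we have $Y_i = X_i$ for $i \in \sigma$ and $Y_i = A_i$ otherwise. The second step is to verify that this collection of splittings is natural with respect to the diagram maps $d_{\sigma,\tau}$, which in each coordinate is either the identity or the based inclusion $A_i \hookrightarrow X_i$. Granting naturality, assembling over the diagram and commuting $\Sigma$ and $\bigvee$ with the colimit yields
$$\Sigma Z(K;(\underline{X},\underline{A})) \;\simeq\; \Sigma \bigvee_{I \subseteq [m]} \operatorname*{colim}_{\sigma \in K} \widehat{D}_I(\sigma).$$

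The final step is to identify, for each $I = (i_1, \ldots, i_k) \subseteq [m]$, the summand $\operatorname{colim}_{\sigma \in K} \widehat{D}_I(\sigma)$ with $\widehat{Z}(K_I;(\underline{X_I},\underline{A_I}))$. The key point is that $\widehat{D}_I(\sigma)$ depends only on the intersection $\sigma \cap I$, since every factor $Y_j$ with $j \notin I$ has been smashed away. As $\sigma$ ranges over $K$, the intersection $\sigma \cap I$ ranges over exactly the simplices of the full sub-complex $K_I$, and the resulting smash product is precisely $\widehat{D}(\sigma \cap I;(\underline{X_I},\underline{A_I}))$ in the polyhedral smash product built from $K_I$. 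A cofinality argument for the induced map of index categories $K \to K_I$, $\sigma \mapsto \sigma \cap I$, then identifies the two colimits.

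The main obstacle I expect is the naturality claim for the James--Milnor splitting on the whole diagram. The standard construction of the splitting chooses a coherent system of cofiber sequences and sections, and one must make these choices once and for all on the factors $(X_i, A_i)$ so that they are respected by every face inclusion $d_{\sigma,\tau}$ simultaneously. Once such a coherent choice is fixed, the rest of the argument is formal, and the naturality of $H$ in the pair $(\underline{X},\underline{A})$ follows directly from the naturality of the splitting in the individual factors.
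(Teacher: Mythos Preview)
The paper does not give a proof of this theorem; it is quoted from \cite{bbcg, bbcg2} and used as input. Your proposal is essentially the argument carried out in those references, so in that sense it agrees with ``the paper's'' proof.

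Your outline is correct, but your stated ``main obstacle'' is not actually an obstacle. The map $H$ is defined in the forward direction: for each $I \subseteq [m]$ one has the manifestly natural composite
\[
D(\sigma) \;=\; \prod_{i \in [m]} Y_i \;\longrightarrow\; \prod_{i \in I} Y_i \;\longrightarrow\; \bigwedge_{i \in I} Y_i \;=\; \widehat{D}_I(\sigma),
\]
(projection followed by quotient), and $H$ is the suspension of the wedge of these over all $I$. No sections or choices are involved in writing down $H$; naturality in both the diagram maps $d_{\sigma,\tau}$ and in the pairs $(X_i,A_i)$ is immediate. What requires work is only the statement that $H$ is a homotopy equivalence on each $\Sigma D(\sigma)$, which is the classical James--Milnor result and is a property, not a structure, so there is nothing to make coherent. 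The rest of your argument (commuting $\Sigma$ and $\bigvee$ with the cofibrant colimit, and the cofinality of $\sigma \mapsto \sigma \cap I$ from $K$ to $K_I$, which holds because $\tau$ is initial in the relevant comma category) is exactly the mechanism used in \cite{bbcg}.
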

%%%%%%%%%%%%%%%%%%%%%%

A second result in \cite{bbcg,bbcg2} is stated next where $|lk_{\sigma}(K)|$ denotes the geometric realization of the link of $\sigma$ in $K$.

\begin{thm}\label{thm:null.A}
Let $K$ be an abstract simplicial complex with $m$ vertices and
$\overline{K}$ its associated poset. Let
$(\underline{X},\underline{A})$  have the property that the
inclusion $A_i\subset X_i$ is null-homotopic for all $i$. Then there
is a homotopy equivalence
$$\widehat{Z}(K;(\underline{X},\underline{A}))\to\bigvee\limits_{\sigma\in
K} |\Delta(\overline{K}_{<\sigma})|*\widehat{D}(\sigma)$$ where
 $$|\Delta(\overline{K}_{<\sigma})| = |lk_{\sigma}(K)|$$  the link of $\sigma$ in $K$.

 \
 In particular if $X_i$ is contractible for all $i$ there is a homotopy equivalence

 $$\widehat{Z}(K;(\underline{X},\underline{A})) \to |K| \ast \widehat{A}^K.$$
Furthermore,  there is a
homotopy equivalence $$\Sigma (Z(K;(\underline{X},\underline{A})))
\to \Sigma(\bigvee\limits_{I\in [m]}(\bigvee\limits_{\sigma\in
K_I}|\Delta((\overline{K}_I)_{<\sigma})|*\widehat{D}(\sigma))).$$

\end{thm}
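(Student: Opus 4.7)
My plan is to establish the general wedge decomposition first, deduce the contractible-$X_i$ specialization as an immediate corollary, and then apply Theorem \ref{thm:decompositions.for.general.moment.angle.complexes} to obtain the suspension formula.

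For the main decomposition, I would induct over the face poset of $K$: enumerate its simplices in a way compatible with inclusion and attach them one at a time. Adding $\sigma$ to the already-assembled polyhedral smash product for a subcomplex $K' \subset K$ expresses $\widehat{Z}(K;(\underline{X},\underline{A}))$ as a pushout whose attaching map has image built from the proper faces of $\sigma$. On each $\widehat{D}(\tau;(\underline{X},\underline{A}))$ with $\tau \subsetneq \sigma$, the map into $\widehat{D}(\sigma;(\underline{X},\underline{A}))$ factors through at least one inclusion $A_j \hookrightarrow X_j$ for $j \in \sigma \setminus \tau$, which is null-homotopic by hypothesis, and a null-homotopic smash factor kills the entire smash product. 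Sliding these null-homotopies along the relevant smash coordinates shows that the full attaching map is null-homotopic, so the pushout splits up to homotopy as a wedge of $\widehat{Z}(K';(\underline{X},\underline{A}))$ with the corresponding mapping cone. A direct inspection, together with the combinatorial fact $|\Delta(\overline{K}_{<\sigma})| = |lk_\sigma(K)|$, identifies this cofiber as $|\Delta(\overline{K}_{<\sigma})| * \widehat{D}(\sigma)$: the order complex of simplices of $K$ related to $\sigma$ assembles the stacked joins coming from simplices that share $\sigma$ as a face. Iterating over all $\sigma \in K$ produces the full wedge.

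The contractible case is immediate from the decomposition: when every $X_i$ is contractible, $\widehat{D}(\sigma)$ is the basepoint for every $\sigma \neq \emptyset$ (each such smash has at least one contractible $X_i$ factor with $i \in \sigma$), so the only surviving summand is $|\Delta(\overline{K}_{<\emptyset})| * \widehat{D}(\emptyset) = |K| * \widehat{A}^K$. The suspension assertion likewise follows by substituting the wedge decomposition just established into Theorem \ref{thm:decompositions.for.general.moment.angle.complexes}, which stably splits $\Sigma(Z(K;(\underline{X},\underline{A})))$ as $\Sigma(\bigvee_{I\subseteq [m]} \widehat{Z}(K_I;(\underline{X_I},\underline{A_I})))$; each full subcomplex $K_I$ inherits the null-homotopy hypothesis, so the decomposition applies to every factor, giving the stated formula.

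The main obstacle is justifying the splitting step of the induction, namely showing that the null-homotopies of the individual inclusions $A_i \hookrightarrow X_i$ can be transported coherently through the colimit-and-smash construction to a genuine null-homotopy of the attaching map, and then that the resulting cofiber is exactly the advertised join $|lk_\sigma(K)| * \widehat{D}(\sigma)$ rather than merely having the right stable type. Both checks hinge on a careful sliding argument that moves the null-homotopies across smash coordinates without disturbing basepoint collapses, together with bookkeeping that matches the combinatorics of simplices above $\sigma$ to the link structure. Once these two geometric facts are in place, the remainder of the proof is routine.
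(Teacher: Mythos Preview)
The paper does not actually prove this theorem: it is introduced with ``A second result in \cite{bbcg,bbcg2} is stated next'' and is simply quoted from the authors' earlier work, so there is no in-paper proof to compare against. Your outline is broadly the strategy used in those references --- build the polyhedral smash product inductively over simplices, use the null-homotopy hypothesis to split each attaching stage, and identify the resulting cofibers with joins over links --- and your derivation of the contractible-$X_i$ case and the suspension formula from the main decomposition plus Theorem~\ref{thm:decompositions.for.general.moment.angle.complexes} is exactly how those corollaries are obtained.

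That said, your sketch is honest about where the real work lies, and you should be aware that the step you flag as ``the main obstacle'' is genuinely nontrivial. Showing that the individual null-homotopies $A_i\simeq *$ in $X_i$ assemble to a \emph{coherent} null-homotopy of the attaching map $\widehat{Z}(\partial\sigma;(\underline{X},\underline{A}))\to \widehat{D}(\sigma)$ is not just a matter of sliding one coordinate at a time: the pieces $\widehat{D}(\tau)$ for $\tau\subsetneq\sigma$ overlap along lower faces, and the null-homotopies must agree on those overlaps. In \cite{bbcg,bbcg2} this is handled by a careful cone/join argument (essentially replacing $X_i$ by the reduced cone $CA_i$ and using the explicit cone coordinate to parametrize the homotopy compatibly across all faces), which simultaneously produces the join description of the cofiber. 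Your proposal gestures at this but does not supply the mechanism; filling it in is the substance of the proof.
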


\

Theorem \ref{thm:null.A} for the case $X_i$ contractible for all $i$ is called the {\it wedge lemma}.  

\

A filtration on $\zk$ is next described. The purpose of introducing this filtration is that there is an associated spectral sequence which is the subject of the article.  The spectral sequence  converges to the cohomology of $\zk$.  

\begin{defin}\label{defin: lexicographical ordering}

 The $(m-1)$-simplex $\Delta[m-1]$ is totally ordered by the left-lexicographical ordering of all faces defined as follows:
 $$\sigma = (i_1,i_2,..., i_s) < \tau = (j_1,j_2,..., j_t)$$ if and only if either

\begin{enumerate}
\item $ 1 \leq s < t \leq m$ or
\item $t = s$, and there exists an integer $n$ such that $(i_1,i_2,..., i_n) = (j_1,j_2,..., j_n)$ but \\
$i_{n+1} < j_{n+1}$.
\end{enumerate}

There are $$(1+   \binom{m}{1}  + \binom{m}{ 2}+ \binom{m}{ 3} + \cdots + \binom {m} { m-1} +\binom {m}{m}) = 2^m$$ faces in $\Delta[m-1]$ (including the empty set) which are totally ordered by the integers $q$ such that $0 \leq q \leq 2^m-1$.

\

Furthermore, let $\sigma_0$ denote the emptyset $\emptyset$; thus $\sigma_0 \leq \sigma $ for all $\sigma$ in $\Delta[m-1]$.

The weight of a face $\sigma$ is that integer $q,$ denoted by $wt(\sigma)$  where $q$ is the position of $\sigma$ in this total left lexicographical ordering of the simplices.

The $(m-1)$-simplex $\Delta[m-1]$ is filtered by requiring  $$F_t\Delta[m-1] = \cup_{wt(\sigma) \leq t}\sigma. $$
\end{defin}

This filtration of $\Delta[m-1]$ induces a filtration of $K$ as given next.

 \begin{defin}\label{defin: filtration of polyhedral products}
 The $(m-1)$-simplex $\Delta[m-1]$ is filtered by the left lexicographical ordering of all faces as in Definition
 \ref{defin: lexicographical ordering}. Let $K$ be a simplicial complex with $m$ vertices.

Filter $K$ by $$F_tK = K \cap F_t\Delta[m-1].$$

Filter the polyhedral product $Z(K;X,A))$ and polyhedral smash product
$\widehat{Z}(K;(X,A))$  by

\begin{enumerate}
\item $$F_tZ(K;(X,A)) = \cup_{wt(\sigma) \leq t} D(\sigma;(X,A)),$$ and
\item  $$F_t\widehat{Z}(K;(X,A)) = \cup_{wt(\sigma) \leq t} \widehat{D}(\sigma;(X,A)).$$
\end{enumerate}

\end{defin}

\

Record this information stated as the next lemma.

\

\begin{lem} \label{lem: filtration of a simplicial complex}
There is a total ordering of all of the faces of a simplicial complex $K$ given by
\begin{enumerate}
\item the left-lexicographical ordering of all of the faces of $\Delta[m-1]$, and
\item the induced ordering via the natural inclusion $$K \subset \Delta[m-1].$$
\end{enumerate}

Furthermore, inclusions $$L \subset K$$ induced by an embedding of simplicial
complexes with $m$ vertices is order preserving, and filtration preserving where
$F_tK = K \cap F_t\Delta[m-1]$ as listed in Definition \ref{defin: filtration of polyhedral products}.
Namely, the inclusion $L \subset K$ induced by an embedding of simplicial
complexes with $m$ vertices is a morphism of filtered complexes $$F_*L \subset F_*K.$$

This filtration of $K$ induces a filtration of the polyhedral product $Z(K;X,A))$ and polyhedral smash product
$\widehat{Z}(K;(X,A))$ given by
\begin{enumerate}
\item $$F_tZ(K;(X,A)) = \cup_{wt(\sigma) \leq t} D(\sigma;(X,A)),$$ and
\item  $$F_t\widehat{Z}(K;(X,A)) = \cup_{wt(\sigma) \leq t} \widehat{D}(\sigma;(X,A)).$$
\end{enumerate} Furthermore, the natural quotient map
$Z(K;X,A)) \to \widehat{Z}(K;(X,A))$ is filtration preserving.

\end{lem}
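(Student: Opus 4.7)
The plan is to verify the lemma by systematic unpacking of Definitions \ref{defin: simplicial complex}, \ref{defin:smash.product.moment.angle.complex}, \ref{defin: lexicographical ordering}, and \ref{defin: filtration of polyhedral products}; the content is essentially bookkeeping, since all orderings and filtrations in sight are inherited from the left-lexicographical total order on $\Delta[m-1]$. First I would observe that because $K$ is a subset of the power set of $[m]$, we have $K \subset \Delta[m-1]$, so the total order on the $2^m$ faces of $\Delta[m-1]$ restricts to a total order on $K$; each $\sigma \in K$ then has a well-defined weight $wt(\sigma)$, namely its position in the ambient order on $\Delta[m-1]$. For an embedding $L \subset K$ of simplicial complexes with $m$ vertices, the weight of any $\sigma \in L$ is independent of whether $\sigma$ is viewed as a face of $L$, of $K$, or of $\Delta[m-1]$, so the inclusion is order-preserving, and
\[
F_tL = L \cap F_t\Delta[m-1] = L \cap K \cap F_t\Delta[m-1] = L \cap F_tK \subset F_tK,
\]
so the inclusion is filtration-preserving.

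Next, for the induced filtrations on $Z(K;(X,A))$ and $\widehat{Z}(K;(X,A))$, the listed formulas are precisely the definitions given in Definition \ref{defin: filtration of polyhedral products}, and monotonicity of the unions in $t$ is immediate from the total ordering, so these assertions are automatic. For the final claim that the natural quotient $q \colon Z(K;(X,A)) \to \widehat{Z}(K;(X,A))$ is filtration preserving, I would use that $q$ is the restriction of the componentwise quotient $X_1 \times \ldots \times X_m \to X_1 \wedge \ldots \wedge X_m$, which by Definition \ref{defin:smash.product.moment.angle.complex} carries each $D(\sigma)$ onto $\widehat{D}(\sigma)$. Since $q$ commutes with unions,
\[
q(F_tZ(K;(X,A))) = \bigcup_{wt(\sigma) \leq t} q(D(\sigma)) = \bigcup_{wt(\sigma) \leq t} \widehat{D}(\sigma) = F_t\widehat{Z}(K;(X,A)),
\]
which completes the verification.

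The only difficulty here is purely notational: one must keep straight the four filtered objects ($\Delta[m-1]$, $K$, $Z(K;(X,A))$, and $\widehat{Z}(K;(X,A))$) and observe that each filtration is pulled back from the previous one along a canonical map. There is no substantive mathematical obstruction beyond the elementary observations that total orders restrict to subsets, that intersections distribute through the chain $L \subset K \subset \Delta[m-1]$, and that the quotient map $q$ commutes with the union defining each filtration stage.
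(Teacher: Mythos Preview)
Your proposal is correct; indeed, the paper provides no proof at all for this lemma, introducing it with the phrase ``Record this information stated as the next lemma'' and treating every assertion as immediate from Definitions \ref{defin: lexicographical ordering} and \ref{defin: filtration of polyhedral products}. Your systematic unpacking is exactly the verification one would write if pressed, and is more explicit than anything in the paper.
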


\

\begin{remark}  The filtration constructed in Lemma \ref{lem: filtration of a simplicial complex} is exploited in \cite{cartan}.
\end{remark}

 \begin{defin}\label{defin: notation for spectral sequence}
 
 \
 
\

\begin{enumerate}
\item If $\sigma \in K$, write $$(\underline{X}/\underline{A})^{\sigma}$$ for the  smash product
 $(X_{i_1}/A_{i_i})\wedge \cdots \wedge (X_{i_q}/A_{i_q})$ where   $I=(i_1, \cdots , i_q)$  is as in Definition \ref{defin:smash.products} and $\sigma$ has vertex set $I$.

 \item Write $$\underline{A}^{\sigma^c}$$ for the product
 $ A_{j_1} \times \cdots \times A_{j_{k-q}}   $ where $\sigma \cup \{j_{1}, \cdots, j_{k-q}\}=[m]$, and $\sigma^c $ denotes the complement of $\sigma$. In particular for $\sigma =\emptyset, A^{\sigma^c}=A_1 \times \cdots \times A_k$.

%\item  For $X$ a pointed space, and $Y$ a space, the right half smash product, $X \rtimes Y$ is defined to be $X \wedge Y_+$.
% \item  Choose  an ordering of the vertices of  $K$.  The induced lexicographical ordering of the simplices of $K$ is defined by $\sigma < \tau$ if the dimension of $\sigma$ is less than the dimension of $\tau$ and  $$(i_1, \cdots, i_t, \cdots i_n )< (j_1, \cdots, j_t, \cdots j_n)$$ if $i_1 = j_1, \cdots, i_{t-1} =j_{t-1}, i_t < j_t$.

\end{enumerate}
 \end{defin}

Half-smash products are basic in this setting with their definition as follows.

 \begin{defin}\label{defin: half smash products}
 Let $$(X, x_0) \  \mbox{and} \ (Y,y_0)$$ denote pointed spaces.
 Define $$X\rtimes Y = (X \times Y)/(x_0 \times Y),$$
 and $$X \ltimes Y = (X \times Y)/ (X \times y_0).$$
\end{defin}

\
An example is given next.

\begin{exm} \label{exm:filtrations}
Let $K$ denote the simplicial complex with two vertices
$\{1,2\}$ and with one edge $(1,2)$.  Then $Z(K; (\underline{X},\underline{A})) = X_1 \times X_2.$
The filtration of $X_1 \times X_2$ given in Definition \ref{defin: lexicographical ordering}
 is stated next.

 \begin{enumerate}
\item $F_0Z(K; (\underline{X},\underline{A})) = A_1 \times A_2$,
\item $F_1Z(K; (\underline{X},\underline{A})) = X_1 \times A_2$,
\item $F_2Z(K; (\underline{X},\underline{A})) = (X_1 \times A_2) \cup (A_1\times X_2)$, and
\item $F_3Z(K; (\underline{X},\underline{A})) = X_1 \times X_2$.
\end{enumerate}

Let $$F_i = F_iZ(K; (\underline{X},\underline{A}))$$ in this example. If $(X_i,A_i)$ are pairs of finite CW-complexes, there are homeomorphisms

 \begin{enumerate}
\item $F_1/F_0 \to (X_1/A_1 \times A_2)/ (* \times A_2) \to X_1/A_1 \rtimes A_2$,
\item $F_2/F_1 = (X_1 \times A_2) \cup (A_1\times X_2)/ (X_1 \times A_2 ) \to
A_1 \ltimes( X_2/A_2),$ and
\item $F_3/F_2 = X_1 \times X_2/(X_1\times A_2 \cup A_1\times X_2 \to (X_1/A_1)\wedge (X_2/A_2)$.
\end{enumerate}

Letting $[x]$ denote image of  the projection of $x \in X_1$ to $X_1/A_1$ then  the homeomorphism in (2) is given by 
$$(X_1 \times A_2) \cup (A_1 \times X_2)/(X_1 \times A_2) \simeq (A_1 \times X_2)/(A_1 \times A_2) \overset{p}{\to} A_1 \ltimes( X_2/A_2)$$ with $p(a \times x)=a \times [x]$

The homeomorphism in (3) is a special case of Lemma \ref{lem: cofibrations}

\

The filtrations and their associated graded for the smash polyhedral products are exhibited next.

 \begin{enumerate}
\item $F_0\widehat{Z}(K; (\underline{X},\underline{A})) = A_1 \wedge A_2$,
\item $F_1\widehat{Z}(K; (\underline{X},\underline{A})) = X_1 \wedge A_2$,
\item $F_2\widehat{Z}(K; (\underline{X},\underline{A})) = (X_1 \wedge A_2) \cup (A_1\wedge X_2)$, and
\item $F_3\widehat{Z}(K; (\underline{X},\underline{A})) = X_1 \wedge X_2$.
\end{enumerate}

Let $$F_i = F_i\widehat{Z}(K; (\underline{X},\underline{A}))$$ in this example where $(X_i,A_i)$ are assumed to be pairs of finite CW-complexes. There are homeomorphisms

 \begin{enumerate}
\item $F_1/F_0 \to (X_1/A_1) \wedge A_2,$
\item $F_2/F_1 = (X_1 \wedge A_2) \cup (A_1\wedge X_2)/ X_1 \wedge A_2  \to A_1 \wedge (X_2/A_2),$ and
\item $F_3/F_2 = X_1 \times X_2/(X_1\times A_2 \cup A_1\times X_2 \to (X_1/A_1)\wedge (X_2/A_2)$.
\end{enumerate}

\end{exm}

Given a filtered space, there is a natural spectral sequence associated to that filtration.
The next theorem records the  properties of the resulting spectral sequence of a filtered space
in the context of polyhedral products with the left-lexicographical ordering obtained from
Definition \ref{defin: lexicographical ordering}.

\

\begin{thm}\label{thm:spectral sequence}
The left-lexicographical ordering of simplices induces spectral sequences of a filtered spaces 
\begin{enumerate}
\item $E_r(K; \xa) \Rightarrow \widetilde{h}^*(\zk) $ with  $$E_1(K; \xa) = \underset{\sigma \in K}{\bigoplus} \widetilde{h}^*(  (\underline{X}/\underline{A})^{\sigma} \rtimes \underline{A}^{\sigma^c} ),$$ 

 and a spectral sequence
\item 
$E_r(\widehat{K}; \xa) \Rightarrow \widetilde{h}^*(\zkh) $

 $$E_1^{s,t}(\widehat{Z}(K; (\underline{X}, \underline{A}))) = \underset{\sigma \in K}{\bigoplus} \widetilde{h}^*( (\underline{X}/\underline{A})^{\sigma}) \wedge \widehat{\underline{A}}^{\sigma^c} ).$$ \end{enumerate}

The notation defined in \ref{defin: notation for spectral sequence}.   The grading, $s$ is the index of the simplex, $\sigma$ in the left-lexicographical ordering.  $t$ is the cohomological degree.

The differentials  satisfy $$d_r: E_r^{s,t} \to E_r^{s+r,t+1}.$$

Furthermore, the spectral sequence is natural for embeddings of simplicial maps, $L \subset K$ with the same number of vertices and with respect to maps of pointed pairs $\xa \to (\underline{Y},\underline{B})$.

The natural quotient map $$Z(K; (\underline{X}, \underline{A})) \to \widehat{Z}(K; (\underline{X}, \underline{A}))$$ induces a morphism of spectral sequences,  and the stable decomposition of Theorem \ref{thm:decompositions.for.general.moment.angle.complexes}
induces a morphism of spectral sequences.

  \end{thm}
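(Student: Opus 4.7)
The plan is to derive both spectral sequences from the standard exact couple of a finite increasing filtration of spaces. Since $K$ has $m$ vertices, Definition \ref{defin: filtration of polyhedral products} yields a length-$2^m$ filtration
$$F_0\zk \subset F_1\zk \subset \cdots \subset F_{2^m-1}\zk = \zk,$$
and analogously for $\zkh$. Because all pairs $(X_i,A_i)$ are based CW pairs, each inclusion $F_{s-1}\hookrightarrow F_s$ is a cofibration; applying $\widetilde{h}^*$ to the long exact sequences of these pairs produces an exact couple with $E_1^{s,t}=\widetilde{h}^{t}(F_s/F_{s-1})$ and connecting differential of bidegree $(1,1)$, hence $d_r$ of bidegree $(r,1)$ on subsequent pages. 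The filtration being finite, strong convergence to $\widetilde{h}^*(\zk)$ is automatic.

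The key step is the identification of the subquotient $F_s/F_{s-1}$, where $\sigma\in K$ denotes the unique simplex of weight $s$. Since $F_s = F_{s-1}\cup D(\sigma)$, excision gives $F_s/F_{s-1}\cong D(\sigma)/(D(\sigma)\cap F_{s-1})$. For any $\tau\in K$, $D(\sigma)\cap D(\tau) = D(\sigma\cap\tau)$, and in the left-lexicographical ordering every simplex of cardinality strictly less than $|\sigma|$ precedes $\sigma$; for any $\tau$ of weight $<s$ with $|\tau|\geq|\sigma|$, the intersection $\sigma\cap\tau$ is a proper face of $\sigma$. Hence
$$D(\sigma)\cap F_{s-1} \;=\; \bigcup_{\tau\subsetneq \sigma} D(\tau) \;=\; \bigl\{(y_1,\ldots,y_m)\in D(\sigma) : y_i\in A_i \text{ for some } i\in\sigma\bigr\},$$
which is precisely the ``fat wedge'' inside $\prod_{i\in\sigma}X_i \times \prod_{j\notin\sigma}A_j$. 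Collapsing it produces $\bigl(\bigwedge_{i\in\sigma}X_i/A_i\bigr)\rtimes \prod_{j\notin\sigma}A_j = (\underline{X}/\underline{A})^{\sigma}\rtimes \underline{A}^{\sigma^c}$, as claimed. For $\zkh$ the identical argument, performed in smash products rather than Cartesian products, further collapses the remaining product factors to their smash, yielding $(\underline{X}/\underline{A})^{\sigma}\wedge \widehat{\underline{A}}^{\sigma^c}$.

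Naturality is then formal. By Lemma \ref{lem: filtration of a simplicial complex}, an embedding of simplicial complexes $L\subset K$ on the same vertex set is filtration-preserving, as is any map of pointed pairs $\xa\to(\underline{Y},\underline{B})$; each such morphism induces a map of exact couples and hence of spectral sequences. The natural quotient $\zk\to \zkh$ is filtration-preserving by the same lemma and on subquotients factors through the further collapse $\underline{A}^{\sigma^c}\to\widehat{\underline{A}}^{\sigma^c}$. For the stable decomposition of Theorem \ref{thm:decompositions.for.general.moment.angle.complexes}, one applies the construction levelwise to each full subcomplex $K_I\subset K$ and combines naturality under the inclusions $K_I\hookrightarrow K$ with naturality of the quotient $Z\to\widehat{Z}$ to obtain the required morphism of spectral sequences.

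The principal technical point is ensuring that $(F_s,F_{s-1})$ is a genuine cofibration pair, so that the naive quotient computes the cofiber homotopy type and excision applies on the nose. This rests on the CW-pair hypothesis: since $A_i\hookrightarrow X_i$ is a cofibration, products and finite unions of such NDR-pairs remain NDR-pairs, so each inclusion $D(\sigma)\cap F_{s-1}\hookrightarrow D(\sigma)$ is a cofibration and the fat-wedge identification above is strict rather than merely up to homotopy.
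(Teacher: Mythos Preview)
Your proof is correct and follows essentially the same route as the paper. Both arguments set up the spectral sequence of the finite lexicographic filtration and reduce everything to identifying the subquotient $F_s/F_{s-1}$; your excision identity $F_s/F_{s-1}\cong D(\sigma)/(D(\sigma)\cap F_{s-1})$ together with the fat-wedge computation $D(\sigma)\cap F_{s-1}=\bigcup_{\tau\subsetneq\sigma}D(\tau)$ is exactly the content of the paper's Lemma \ref{lem:associated graded}, where the same thing is phrased via the pushout square
\[
\begin{CD}
Z(\partial\Delta;\xa) @>>> Z(\Delta;\xa)\\
@VVV @VVV\\
F_{s-1}Z @>>> F_sZ
\end{CD}
\]
(citing \cite{GT}) and Lemmas \ref{lem: cofibrations}--\ref{lem: half.smashcofibrations} to identify the common cofibre as $(\underline{X}/\underline{A})^{\sigma}\rtimes\underline{A}^{\sigma^c}$. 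Your direct argument that $D(\sigma)\cap D(\tau)=D(\sigma\cap\tau)$ and that $\sigma\cap\tau$ is a proper face whenever $wt(\tau)<wt(\sigma)$ is a clean way to avoid that citation, and your final paragraph on NDR pairs supplies exactly the cofibration hypothesis the paper invokes. One small wording nit: when you write ``for any $\tau$ of weight $<s$ with $|\tau|\geq|\sigma|$'', the lexicographic definition forces $|\tau|=|\sigma|$ in that case, so you may as well say so.
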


 \
 
 \begin{remark}
 We remark  that $\zk$ and the spectral sequence commutes with colimits in $\xa$.  

\

 We also note that we only use the fact that the inclusions $F_t \subset F_{t+1}$ are cofibrations.  This follows from the hypothesis that that $(X_i,A_i)$ are finite CW pairs.  The argument generalizes to NDR pairs. 
 \end{remark}

Some consequences  of this spectral sequence are worked out below.  An explicit description of the cohomology of $Z(K; (\underline{X},\underline{A}))$ with field coefficients $\mathbb F$ will be given next followed by a section on examples. The answers for cohomology are given in terms of kernels and cokernels
of $$H^i(X_j) \to H^i(A_j).$$
\
%\color{blue} your definition of $SR(X,A)$ should include the complex, $K$. \color{red}
\begin{defin}\label{defin:SR.ideals.again}

Assume that $K$ is a simplicial complex and the pointed pairs $(\underline{X},\underline{A},\underline{*})$ are of finite type.  Assume that
the maps $A_i \to X_i$ induce split surjections 
in cohomology with field coefficients
$\mathbb F$. Consider the kernel of $H^i(X_j) \to H^i(A_j)$ together with the elements
$x_j \in \mbox{kernel}(H^i(X_j) \to H^i(A_j))$ together with the two-sided ideal
generated by all such $x_{i_1}\otimes  x_{i_2} \otimes \cdots \otimes x_{i_t}$ with $(i_1, i_2, \cdots, i_t)$ not a simplex in $K$, denoted
$$SR(K; (\underline{X},\underline{A})).$$

\end{defin}
\color{black}

%\begin{defin}\label{defin:SR.ideals.again}

%Assume that the pointed pairs $(\underline{X},\underline{A},\underline{*})$ are of finite type, and assume that the maps $A_i \to X_i$ induce split monomorphisms in homology with field coefficients $\mathbb F$. Consider the kernel of $H^i(X_j) \to H^i(A_j)$ together with the elements $x_j \in \mbox{kernel}(H^i(X_j) \to H^i(A_j))$ together with the two-sided ideal  generated by all such $x_1\otimes x_2 \otimes \cdots \otimes x_n$ denoted  $$SR(\underline{X},\underline{A}).$$ \end{defin}

A result which is analogous to Theorem $2.35$ of \cite{bbcg} follows next.

\begin{thm} \label{thm:Cartan_and_split.monomorphisms}
Let $K$ be an abstract simplicial complex with $m$ vertices. Assume
that $$(\underline{X},\underline{A},\underline{*})$$ are pointed triples of connected
CW-complexes of finite type for all $i$ for which cohomology is taken with field coefficients $\mathbb F$.
If the maps $A_i \to X_i$ induce split surjections in cohomology, then the induced map $$H^*(X^{[m]}) \to H^*(Z(K; (\underline{X},\underline{A})))$$ is an epimorphism of algebras which is additively split. Furthermore, there is an induced isomorphism of algebras $$H^*(X^{[m]})/SR(K; (\underline{X},\underline{A})) \to H^*(Z(K; (\underline{X},\underline{A}))).$$

\end{thm}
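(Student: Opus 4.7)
The plan is to set up the spectral sequence of Theorem~\ref{thm:spectral sequence}(1), compare it with the tautological case $K = \Delta[m-1]$ (where $Z(\Delta[m-1];\xa) = X^{[m]}$), and then identify the kernel of the restriction map $j^*: H^*(X^{[m]}) \to H^*(Z(K;\xa))$ via a relative-cohomology argument. Under the split-surjection hypothesis the connecting map $\delta$ vanishes, placing us in the strong freeness framework of Definition~\ref{def:bas} with $E_i = W_i = 0$: setting $C_i = \ker(\iota)$ and choosing a splitting $B_i \subset H^*(X_i)$ of $H^*(A_i)$ gives $H^*(X_i) = B_i \oplus C_i$ and $\widetilde H^*(X_i/A_i) \cong C_i$. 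The K\"unneth formula then yields
\[
H^*(X^{[m]}) \;=\; \bigoplus_{\sigma \subseteq [m]} \Big(\bigotimes_{i \in \sigma} C_i\Big) \otimes \Big(\bigotimes_{j \notin \sigma} B_j\Big),
\]
and since $B_i \cdot C_i \subseteq C_i$, the ideal $SR(K;\xa)$ is precisely the direct sum of the $\sigma$-summands with $\sigma \notin K$; hence $H^*(X^{[m]})/SR(K;\xa)$ is the direct sum indexed by $\sigma \in K$.

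Using the half-smash identity $\widetilde H^*(X \rtimes Y) \cong \widetilde H^*(X) \otimes H^*(Y)$, Theorem~\ref{thm:spectral sequence}(1) gives $E_1(K;\xa) \cong \bigoplus_{\sigma \in K} (\bigotimes_{i \in \sigma} C_i) \otimes (\bigotimes_{j \notin \sigma} B_j)$, matching $H^*(X^{[m]})/SR(K;\xa)$ as a graded $\mathbb F$-vector space. To show collapse, I compare with $K = \Delta[m-1]$: the spectral sequence for $X^{[m]}$ has $E_1(\Delta[m-1];\xa) \cong H^*(X^{[m]})$ of total dimension equal to the abutment, so it collapses at $E_1$. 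By Lemma~\ref{lem: filtration of a simplicial complex} and the naturality statement in Theorem~\ref{thm:spectral sequence}, the inclusion $K \subset \Delta[m-1]$ induces a morphism $E_r(\Delta[m-1];\xa) \to E_r(K;\xa)$; on $E_1$ this is the surjective projection killing the non-simplex summands, and an inductive application of naturality of differentials forces every $d_r^K$ to vanish. Consequently $\dim_{\mathbb F} H^*(Z(K;\xa)) = \dim_{\mathbb F} E_1(K;\xa) = \dim_{\mathbb F} H^*(X^{[m]})/SR(K;\xa)$.

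It remains to show $SR(K;\xa) \subseteq \ker j^*$. For $\sigma = (i_1,\ldots,i_t) \notin K$ and $x_{i_k} \in C_{i_k}$, each $x_{i_k}$ lifts to $\tilde x_{i_k} \in H^*(X_{i_k}, A_{i_k})$ via the long exact sequence of the pair, and the relative cup product lifts $x_{i_1}\cdots x_{i_t}$ to a class in $H^*(X^{[m]}, U_\sigma)$ where
\[
U_\sigma \;=\; \bigcup_{k=1}^t \Big(A_{i_k} \times \prod_{l \neq i_k} X_l\Big).
\]
Because $\sigma \not\subseteq \tau$ for every $\tau \in K$, each $D(\tau)$ lies in some $A_{i_k} \times \prod_{l \neq i_k} X_l$, whence $Z(K;\xa) \subseteq U_\sigma$, and exactness of the pair $(X^{[m]}, U_\sigma)$ forces $j^*(x_{i_1}\cdots x_{i_t}) = 0$. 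Thus $j^*$ factors through an algebra epimorphism $\bar j^*: H^*(X^{[m]})/SR(K;\xa) \twoheadrightarrow H^*(Z(K;\xa))$; the dimension equality from spectral-sequence collapse promotes $\bar j^*$ to an isomorphism of graded algebras. An additive splitting is realized geometrically by the stable wedge decomposition of Theorem~\ref{thm:decompositions.for.general.moment.angle.complexes}, whose wedge summands $\widetilde H^*(\widehat Z(K_I;\xa_I))$ assemble precisely the $\sigma$-summands of $E_1(K;\xa)$. The main obstacle is the collapse argument via naturality against $\Delta[m-1]$; once the spectral sequence is known to degenerate and $SR(K;\xa)$ is located inside $\ker j^*$, multiplicativity and the additive splitting are automatic from the ring structure of $j^*$ and the K\"unneth decomposition.
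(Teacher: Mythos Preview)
Your proof is correct and parallels the paper's strategy: collapse the lexicographic spectral sequence under the surjectivity hypothesis, then read off the answer from the K\"unneth decomposition of $H^*(X^{[m]})$. The paper argues collapse more locally (Proposition~\ref{exm:splitting}): at each filtration stage it compares the common cofibre $C$ in the two rows
\[
\begin{array}{ccccccc}
0 \leftarrow & h^*(Z(\partial\Delta;\xa)) & \leftarrow & h^*(Z(\Delta;\xa)) & \leftarrow & h^*(C) & \leftarrow 0\\
& \uparrow & & \uparrow & & \Vert & \\
\leftarrow & h^*(F_{s-1}Z) & \leftarrow & h^*(F_sZ) & \leftarrow & h^*(C) & \overset{\delta}{\leftarrow}
\end{array}
\]
and uses injectivity of $\widetilde H^*(X_i/A_i)\hookrightarrow H^*(X_i)$ (plus freeness) to make the top row short exact, forcing $\delta=0$ in the exact couple. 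Your global comparison with the $\Delta[m-1]$ spectral sequence via naturality is a clean repackaging of exactly this idea: the surjection $E_1(\Delta)\twoheadrightarrow E_1(K)$ together with $d_r^{\Delta}=0$ propagates inductively to kill all $d_r^{K}$. Your explicit relative-cohomology check that $SR(K;\xa)\subseteq\ker j^*$ via lifting $x_{i_1}\cdots x_{i_t}$ to $H^*(X^{[m]},U_\sigma)$ with $Z(K;\xa)\subset U_\sigma$ is not spelled out in the paper, which simply asserts the kernel from the additive decomposition after invoking Proposition~\ref{exm:splitting}; your version is slightly more self-contained on that point, while the paper's local $\delta=0$ argument has the advantage of working directly for any multiplicative $h^*$ under the stated freeness hypotheses without a separate dimension count.
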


\

Recall that a map $A_i \to X_i$ induces a split monomorphism in integer homology if and only
if it induces a split monomorphism with field coefficients for every prime field $\mathbb F_p$
and the rational numbers. A corollary of Theorem  \ref{thm:Cartan_and_split.monomorphisms}
which follows immediately is stated next. 

\

\begin{cor} \label{thm:Cartan_and_split.monomorphisms.INTEGRALLY}
Let $K$ be an abstract simplicial complex with $m$ vertices. Assume
that $$(\underline{X},\underline{A}, \underline{*})$$ are pointed triples of connected
CW-complexes of finite type for all $i$ for which cohomology is taken with coefficients $\mathbb Z$.
Assume that the maps $A_i \to X_i$ induce split monomorphisms in homology over $\mathbb Z$, then the induced map $$H^*(X^{[m]};\mathbb Z) \to H^*(Z(K; (\underline{X},\underline{A})); \mathbb Z)$$ is an epimorphism of algebras
which is additively split.
\end{cor}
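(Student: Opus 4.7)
My plan is to reduce the integral claim to the field case of Theorem~\ref{thm:Cartan_and_split.monomorphisms} and then lift the field-level splittings to an integral additive section. The equivalence recorded just before the corollary says that $A_i \to X_i$ is a split monomorphism in $H_*(-;\mathbb{Z})$ iff it is so on $H_*(-;\mathbb{F})$ for every prime field $\mathbb{F}_p$ and for $\mathbb{F} = \mathbb{Q}$. Dualizing via the Universal Coefficient Theorem yields split epimorphisms $H^*(X_i;\mathbb{F}) \to H^*(A_i;\mathbb{F})$ for each such $\mathbb{F}$, and applying Theorem~\ref{thm:Cartan_and_split.monomorphisms} over each of these coefficient fields produces split epimorphisms of $\mathbb{F}$-algebras
\[
H^*(X^{[m]};\mathbb{F}) \twoheadrightarrow H^*(Z(K;(\underline{X},\underline{A}));\mathbb{F})
\]
with kernel the Stanley--Reisner ideal $SR(K;(\underline{X},\underline{A}))$ in each case.

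For integral surjectivity, I would pass to cokernels: the finite-type hypothesis ensures the cokernel of the integral map is finitely generated in each degree, and its vanishing after tensoring with every $\mathbb{F}_p$ and with $\mathbb{Q}$ forces it to be zero. For the integral additive splitting, I would exploit the retraction $\rho_i\colon H_*(X_i;\mathbb{Z}) \to H_*(A_i;\mathbb{Z})$ supplied directly by the hypothesis, which by UCT dualizes to an explicit splitting $H^*(X_i;\mathbb{Z}) = \rho_i^*(H^*(A_i;\mathbb{Z})) \oplus \ker(\iota_i^*)$ of graded abelian groups. Combining this decomposition with the integral spectral sequence of Theorem~\ref{thm:spectral sequence} and the stable wedge decomposition of Theorem~\ref{thm:decompositions.for.general.moment.angle.complexes}, which is natural with respect to the inclusion $Z(K) \subseteq X^{[m]} = Z(\Delta[m-1])$, produces on each wedge summand $\widehat{Z}(K_I;(\underline{X}_I,\underline{A}_I)) \hookrightarrow \widehat{X}^I$ an integral decomposition whose collected summands assemble a candidate section.

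The main obstacle is verifying that this candidate is actually an additive section integrally; over a field one simply picks representatives of a basis of the quotient, but over $\mathbb{Z}$ one must control the extension obstructions that could prevent the summand-by-summand splittings from assembling into a global additive splitting. My approach is to show that the $E_1$ terms of the integral spectral sequence, built from the concrete splittings $H^*(X_i;\mathbb{Z}) = B_i \oplus K_i$ above, are $\mathbb{Z}$-free direct summands of the expected tensor factors, so that no $\mathrm{Ext}^1$ obstructions arise along the spectral sequence filtration and the section built summand-by-summand extends to a genuine additive section of $H^*(X^{[m]};\mathbb{Z}) \to H^*(Z(K;(\underline{X},\underline{A}));\mathbb{Z})$, thereby completing the proof.
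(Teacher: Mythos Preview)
Your high-level strategy matches the paper's, which simply records the equivalence ``split mono in $H_*(-;\mathbb Z)$ $\Leftrightarrow$ split mono in $H_*(-;\mathbb F)$ for every $\mathbb F_p$ and $\mathbb Q$'' and then asserts the corollary ``follows immediately'' from Theorem~\ref{thm:Cartan_and_split.monomorphisms}; no further argument is given there. So you are attempting to fill in exactly what the paper elides.

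That said, two steps in your execution do not go through as written. First, the cokernel argument for integral surjectivity tacitly identifies $H^*(-;\mathbb F)$ with $H^*(-;\mathbb Z)\otimes\mathbb F$. The universal coefficient sequence has a $\mathrm{Tor}$ term, so surjectivity of $H^*(X^{[m]};\mathbb F)\to H^*(Z(K);\mathbb F)$ does not literally say that the cokernel of the integral map becomes zero after tensoring with $\mathbb F$; a nonzero $p$-torsion element of the integral cokernel can die under $-\otimes\mathbb F_p$ (think of $p\in\mathbb Z/p^2$). You would need a more careful comparison through the natural UCT sequences, or an argument on the level of the connecting homomorphism of the pair $(X^{[m]},Z(K))$, to pass from all fields back to $\mathbb Z$.

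Second, your plan for the additive splitting asserts that the integral $E_1$-terms are $\mathbb Z$-free. Nothing in the hypotheses forces $H^*(X_i;\mathbb Z)$, $H^*(A_i;\mathbb Z)$ or $H^*(X_i/A_i;\mathbb Z)$ to be torsion-free, so Proposition~\ref{exm:splitting} and the ``strong freeness'' machinery of Definition~\ref{def:bas} are not available integrally, and the $\mathrm{Ext}^1$ obstructions you mention cannot be dismissed on freeness grounds. A cleaner route is to use the integral hypothesis directly: the retraction $H_*(X_i;\mathbb Z)\to H_*(A_i;\mathbb Z)$, pushed through the \emph{natural} UCT short exact sequences for $X_i$ and $A_i$ (both of which split, and whose outer terms $\mathrm{Hom}$ and $\mathrm{Ext}$ inherit split surjections from the homology splitting), yields a genuine integral split surjection $H^*(X_i;\mathbb Z)\to H^*(A_i;\mathbb Z)$. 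With that in hand you can rerun the argument proving Theorem~\ref{thm:Cartan_and_split.monomorphisms} (the decomposition $H^*(X_i)=H^*(A_i)\oplus\widetilde H^*(X_i/A_i)$ and the comparison with $Z(\Delta[m-1])$) over $\mathbb Z$ without ever invoking freeness of the pieces.
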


\begin{remark}
	The proof of Theorem \ref{thm:Cartan_and_split.monomorphisms} works just as well for any multiplicative cohomology $h^*$ and CW pairs with $h^*(X_i)$ and $h^*(A_i)$ finitely generated free $h^*$ modules. 
\end{remark}

%\begin{thm} \label{thm:wedge.homology.decompositions}
%Let $K$ be an abstract simplicial complex with $m$ vertices. Assume
%that $(\underline{X},\underline{A})$ are pointed triples of
%CW-complexes for all $i$. The homology of $Z(K;
%(\underline{X},\underline{A}))$ with any field coefficients $\mathbb F$ depends only
%on $K$, and  $$H_*(A_i; \mathbb F) \to H_*(X_i; \mathbb F)$$ for all $i$.
%\end{thm}

\

 \

%%%%%%%%%%%%%%%%%%%%%%%%%%%%%%%%%%%%%%%%%%%%%%%%%%%%%%%
\section{\bf A spectral sequence for the cohomology of $Z(K,\xa)$ and $\zkh$}\label{sec:spectral sequence one}

\

 The object of this section is to construct the  spectral sequences of Theorem \ref{thm:spectral sequence}.  In subsequent sections these spectral sequences will be used to compute the cohomology of $Z(K; \xa)$ when $\xa$ satisfies suitable flatness conditions.
 %{\bf Before giving details, it should be pointed out that this spectral sequence is a special case of the spectral arising from a filtered space with a filtration given for the polyhedral product. Furthermore, this filtration is induced by a refinement of G.~Whitehead's classical filtration of a product.}

 \

The spectral sequences of Theorem \ref{thm:spectral sequence} are precisely those obtained by filtering the spaces
$Z(K,\xa)$ and $\widehat{Z}(K; (\underline{X}, \underline{A}))$ by finite filtrations induced by the left-lexicographical ordering. Since these spectral sequences arise by finite filtrations,  the spectral sequences converge in the strong sense. It remains to identify the associated graded $E_0$ as well as $E_1$, and the first differential.

% \begin{enumerate}
%\item $E_r(K; \xa) \Rightarrow \widetilde{H}^*(\zk) $ with  $$E_1(K; \xa) = \underset{\sigma \in K}{\bigoplus} \widetilde{H}^*(  (\underline{X}/\underline{A})^{\sigma} \rtimes \underline{A}^{\sigma^c} ),$$ as well as, and \item $$E_1(\widehat{Z}(K; (\underline{X}, \underline{A}))) = \underset{\sigma \in K}{\bigoplus} \widetilde{H}^*( \widehat{D}(\sigma;(\underline{X},\underline {A})) \wedge \widehat{\underline{A}}^{\sigma^c} ).$$ \end{enumerate}

%\

%\begin{thm}\label{thm:th1}
% $$E_r(K,\xa) \Rightarrow \widetilde{H}^*(\zk) $$ with
%$$E_1(K,\xa) = \underset{\sigma \in K}{\bigoplus} \widetilde{H}^*(  (\underline{X}/\underline{A})^{\sigma} \rtimes \underline{A}^{\sigma^c} ) $$

\

The next three lemmas  give the identification of $E_0$.

\

Suppose that $(X, x_0) \  \mbox{and} \ (Y,y_0)$ are both pointed CW complex, then the right and left half-smash products were defined in Definition \ref{defin: half smash products} by $$X\rtimes Y = (X \times Y)/(x_0 \times Y), \ \mbox{and} \
X \ltimes Y = (X \times Y)/ (X \times y_0).$$  A useful lemma follows in which $X_{+}$ denotes $X$ with a disjoint base-point added.

\

 \begin{lem} \label{lem: cofibrations.two}

Let $$(X, x_0), \   \mbox{and} \  (Y,y_0)$$ be  pointed, finite CW pairs. Then there are homotopy equivalences

\begin{enumerate}
\item  $\Sigma(X \rtimes Y ) \to \Sigma(X \wedge Y) \vee \Sigma(X)$,
\item  $\Sigma(X \ltimes Y ) \to \Sigma(X \wedge Y) \vee \Sigma(Y)$, and
\item  $X \ltimes Y = (X \times Y)/ (X \times y_0) \to  X_{+} \wedge Y.$
%(X_{+} \times Y)/ (X \times y_0 \cup   \{+\} \times Y)= X_{+} \wedge Y.$$ a homeomorphism.

\end{enumerate}

 \end{lem}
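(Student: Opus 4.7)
The plan is to prove (3) directly by an explicit homeomorphism and to deduce (1) and (2) from a split cofibration whose cofiber is the smash product.

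For (3), I would define $\phi: X \times Y \to X_+ \wedge Y$ by $(x,y) \mapsto x \wedge y$, viewing $x \in X$ as an element of $X_+ = X \sqcup \{+\}$. Every point $(x, y_0)$ maps to the basepoint of $X_+ \wedge Y$, so $\phi$ descends to a continuous map $\bar\phi: X \ltimes Y \to X_+ \wedge Y$. A continuous inverse is defined by sending $x \wedge y$ with $x \in X$ to the class $[x, y]$, and sending any class involving $+$ to the basepoint of $X \ltimes Y$. These are mutually inverse bijections, and the finite CW hypothesis guarantees both are continuous, yielding a homeomorphism.

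For (1), I would observe that the first-factor projection $X \times Y \to X$ sends $\{x_0\} \times Y$ to $\{x_0\}$ and therefore descends to a continuous retraction $p: X \rtimes Y \to X$ of the section $s: X \to X \rtimes Y$, $x \mapsto [x, y_0]$. The cofiber of $s$ is
\[
(X \rtimes Y)/s(X) \;=\; (X \times Y)/\bigl(\{x_0\} \times Y \cup X \times \{y_0\}\bigr) \;=\; X \wedge Y,
\]
so we obtain a split cofibration sequence $X \xrightarrow{s} X \rtimes Y \xrightarrow{q} X \wedge Y$. It is a standard consequence of the Puppe sequence and the co-$H$-space structure on $\Sigma(X \rtimes Y)$ that a cofibration admitting a retraction splits off as a wedge after one suspension: the self-map $\mathrm{id}_{\Sigma(X \rtimes Y)} - \Sigma s \circ \Sigma p$ vanishes after precomposition with $\Sigma s$ and therefore factors through $\Sigma(X \wedge Y)$, yielding a section of $\Sigma q$ which, combined with $\Sigma s$, assembles into the desired homotopy equivalence
\[
\Sigma(X \rtimes Y) \;\simeq\; \Sigma X \vee \Sigma(X \wedge Y).
\]

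The proof of (2) is the mirror-image argument: the second-factor projection $X \times Y \to Y$ descends to a retraction $X \ltimes Y \to Y$ of the section $y \mapsto [x_0, y]$, with cofiber again $X \wedge Y$, and the same split-cofibration argument produces the suspension wedge decomposition. The only real technical point to check is that the relevant basepoint inclusions are genuine cofibrations so that the Puppe sequence argument applies cleanly; this is immediate from the hypothesis that $(X, x_0)$ and $(Y, y_0)$ are pointed finite CW pairs. With that in place, the arguments are completely formal.
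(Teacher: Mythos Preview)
Your proof is correct. The paper does not actually supply its own proof of this lemma; it is stated as a standard fact and used freely thereafter, so there is nothing to compare your argument against beyond noting that your split-cofibration/Puppe-sequence approach for (1) and (2) and explicit homeomorphism for (3) are exactly the kind of routine verifications one would expect here.
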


%%%%%%%%%%%%%%%%%
\begin{lem} \label{lem: cofibrations}

Let $$(X_i, A_i)$$ be  finite CW pairs. Let $\Lambda$ denote the subspace of
$$X_1 \times X_2 \times \cdots \times X_n$$ given by
$$\Lambda = \cup_{1 \leq i \leq m} X_1 \times X_2 \times \cdots  \times X_{i-1} \times A_i \times X_{i+1} \times \cdots \times X_m.$$ There is a  natural  homeomorphism $$\theta: X_1 \times X_2 \times \cdots X_m/\Lambda \to (X_1/A_1) \wedge (X_2/A_2) \wedge \cdots \wedge (X_m/A_m).$$
\end{lem}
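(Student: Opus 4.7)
The plan is to build $\theta$ from the universal property of quotient spaces. The product of the collapse maps $q_i \colon X_i \to X_i/A_i$ gives a continuous map
\[
Q \colon X_1 \times \cdots \times X_m \longrightarrow (X_1/A_1) \times \cdots \times (X_m/A_m),
\]
which I compose with the projection $\pi$ onto the smash product $(X_1/A_1) \wedge \cdots \wedge (X_m/A_m) = \bigl(\prod_i X_i/A_i\bigr)/W$, where $W = \bigcup_i (X_1/A_1) \times \cdots \times \{*\} \times \cdots \times (X_m/A_m)$ is the fat wedge. First I would observe that $\pi \circ Q$ sends $\Lambda$ to the basepoint: a point $(x_1,\dots,x_m)$ with some $x_i \in A_i$ maps under $q_i$ to the basepoint of $X_i/A_i$, so $Q$ sends it into $W$. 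By the universal property of the quotient by $\Lambda$, this induces the desired continuous map $\theta$.

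Next I would verify that $\theta$ is a bijection, by exhibiting an inverse at the set level. Any point of $(X_1/A_1)\wedge\cdots\wedge(X_m/A_m)$ other than the basepoint is uniquely of the form $[x_1]\wedge\cdots\wedge[x_m]$ with each $x_i \in X_i \setminus A_i$; since the $q_i$ restrict to bijections $X_i \setminus A_i \to (X_i/A_i)\setminus\{*\}$, this class corresponds to the single equivalence class $[(x_1,\dots,x_m)]$ in $\bigl(\prod_i X_i\bigr)/\Lambda$. The basepoints on both sides correspond. Equivalently, the equivalence relation on $\prod_i X_i$ collapsing $\Lambda$ to a point coincides with the preimage of the equivalence relation defining the smash product, so both quotient maps from $\prod_i X_i$ factor through one another.

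Finally, since each $(X_i,A_i)$ is a finite CW pair, $\prod_i X_i$ is compact and both quotients are compact Hausdorff. Hence the continuous bijection $\theta$ is automatically a homeomorphism. Naturality in $(X_i,A_i)$ is immediate because $\theta$ is defined via the functorial maps $q_i$. The only mild subtlety is ensuring the smash product is Hausdorff so that the compact-to-Hausdorff argument applies; this is guaranteed by the finite CW hypothesis (which ensures the fat wedge $W$ is closed in $\prod_i X_i/A_i$), so no real obstacle arises.
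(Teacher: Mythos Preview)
Your proof is correct and follows essentially the same line as the paper: construct the natural map $\theta$ by factoring the composite of the coordinatewise quotient maps through $\Lambda$, check it is a bijection by inspection of fibers, and then invoke the compact--Hausdorff argument (using the finite CW hypothesis) to upgrade the continuous bijection to a homeomorphism. Your write-up is in fact a bit more careful than the paper's about the Hausdorff condition on the target and about naturality, but there is no substantive difference in approach.
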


\

\begin{proof} Let  $[x_i]$ denote the image of $x_i \in X_i$ of   the projection $X_i \to X_i/A_i.$  The natural  map 
$$\phi: X_1 \times X_2 \times \cdots X_m\to (X_1/A_1) \wedge (X_2/A_2) \wedge \cdots \wedge (X_m/A_m)$$
which send $(x_1,\cdots,x_m) $ to $([x_1], \cdots, [x_m])$
is a continuous surjection.  The  class $(x_1,\cdots,x_m) $ maps to the base point in  $(X_1/A_1) \wedge (X_2/A_2) \wedge \cdots \wedge (X_m/A_m)$ if and only if at least one of the factors, $x_i$ maps to the base point in $X_i/A_i$.  Equivalently at least one of the factors $ x_i \in A_i$.  In particular $\phi$ factors through a map, $\theta$.  By construction $\theta$ is a bijection. The lemmas follow for compact CW complexes with $A_i$ closed in $X_i$, since the target space is Hausdorff while the domain is compact. Thus the natural map is a homeomorphism.

\

\end{proof}

\begin{remark}
	  The lemmas extends to locally finite  CW complexes  by taking a limit over finite skeleta. 
\end{remark}

%Suppose that $(X, x_0) \  \mbox{and} \ (Y,y_0)$ are both pointed CW complex, then the right and left half-smash products were defined in Definition \ref{defin: half smash products} by $$X\rtimes Y = (X \times Y)/(x_0 \times Y), \ \mbox{and} \  X \ltimes Y = (X \times Y)/ (X \times y_0).$$
Another useful lemma follows.
% in which $X_{+}$ denotes $X$ with a disjoint base-point added.

 %\begin{lem} \label{lem: cofibrations.two}
% Let $$(X, x_0), \   \mbox{and} \  (Y,y_0)$$ be  pointed, finite CW pairs. Then there are homotopy equivalences\begin{enumerate} \item  $\Sigma(X \rtimes Y ) \to \Sigma(X \wedge Y) \wedge \Sigma(X)$,\item  $\Sigma(X \ltimes Y ) \to \Sigma(X \wedge Y) \wedge \Sigma(Y)$, and \item  $X \ltimes Y = (X \times Y)/ (X \times y_0) \to  X_{+} \wedge Y.$
%(X_{+} \times Y)/ (X \times y_0 \cup   \{+\} \times Y)= X_{+} \wedge Y.$$ a homeomorphism. \end{enumerate} \end{lem}

%%%%%%%%%%%%%%%%%
\begin{lem} \label{lem: half.smashcofibrations}

Let $$(Y_i, A_i)$$ be finite, pointed CW pairs.  Then there is a homeomorphism
$$(Y_1 \times Y_2 )/(A_1 \times Y_2) \to (Y_1/A_1) \rtimes (Y_2).$$ 

\

Thus there are homeomorphisms

\begin{multline*}
(Y_1 \times Y_2 \times \cdots \times Y_n )/(A_1 \times Y_2 \times \cdots \times Y_n ) \to \\
(Y_1/A_1) \rtimes (Y_2 \times \cdots \times Y_n ) \to (\cdots (Y_1/A_1) \rtimes (Y_2)) \rtimes Y_3) \cdots \rtimes Y_n ).
\end{multline*}

\end{lem}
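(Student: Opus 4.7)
The plan is to mirror the argument used in the proof of Lemma \ref{lem: cofibrations}. For the first assertion, consider the continuous surjection
$$\phi: Y_1 \times Y_2 \to (Y_1/A_1) \rtimes Y_2$$
obtained by composing $q \times \mathrm{id}: Y_1 \times Y_2 \to (Y_1/A_1) \times Y_2$, where $q: Y_1 \to Y_1/A_1$ is the quotient map, with the quotient map $(Y_1/A_1) \times Y_2 \to (Y_1/A_1) \rtimes Y_2 = ((Y_1/A_1) \times Y_2)/(\{*\} \times Y_2)$. Since $\phi$ sends every point of $A_1 \times Y_2$ to the basepoint, the universal property of the quotient $(Y_1 \times Y_2)/(A_1 \times Y_2)$ yields a continuous map $\theta$ with $\phi = \theta \circ \pi$, where $\pi$ denotes the quotient projection.

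Next I would verify $\theta$ is a bijection. A point $(y_1, y_2) \in Y_1 \times Y_2$ maps to the basepoint of $(Y_1/A_1) \rtimes Y_2$ precisely when $y_1 \in A_1$, so the only nontrivial identifications performed by $\phi$ are collapsing $A_1 \times Y_2$ to a point, which exactly matches the identifications in the domain quotient; thus $\theta$ is injective, and surjectivity is clear. Finally, since the pairs are finite CW, the domain $(Y_1 \times Y_2)/(A_1 \times Y_2)$ is compact and the target is Hausdorff, so the continuous bijection $\theta$ is a homeomorphism.

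For the iterated statement, the first homeomorphism is an immediate application of the case just proved, taking $Y_2$ to be the product $Y_2 \times \cdots \times Y_n$ (with its product basepoint). For the second homeomorphism I would proceed by induction on $n$, using the identity
$$(X \rtimes Y) \rtimes Z \ \cong \ X \rtimes (Y \times Z),$$
proved by the same universal-property argument: both sides arise as the quotient of $X \times Y \times Z$ by the subspace $\{x_0\} \times Y \times Z$. The basepoint of $X \rtimes Y$ is the single collapsed point $[\{x_0\} \times Y]$, so in forming $(X \rtimes Y) \rtimes Z$ one collapses the image of $\{x_0\} \times Y \times Z$, which agrees with the subspace collapsed in $X \rtimes (Y \times Z)$.

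The main obstacle, if one exists at all, is the bookkeeping in tracking basepoints through the iterated half-smashes and confirming that the iterated collapse corresponds to collapsing the single slab $\{*\} \times Y_2 \times \cdots \times Y_n$ in $Y_1 \times Y_2 \times \cdots \times Y_n$; once that is done, the compact-to-Hausdorff argument delivers the homeomorphism at each stage, and the statement follows from an obvious induction.
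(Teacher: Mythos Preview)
Your proposal is correct and follows essentially the same approach as the paper: the paper's proof simply states that ``the natural quotient map is a continuous bijection by inspection'' and then invokes the compact-to-Hausdorff argument, so your version just spells out explicitly what the paper leaves implicit.
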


\

\begin{proof}
The natural quotient map is a continuous bijection by inspection. Since all spaces are finite complexes with
$A_i$ closed in $Y_i$, the target space is Hausdorff while the domain is compact. Thus the natural map is a homeomorphism.

\end{proof}

%%\begin{equation}\label{rtimesProperty} \Sigma(X \rtimes A ) \to \Sigma(X \wedge A) \wedge \Sigma(X). \end{equation}
%Since both $Z(K;X,A))$, and  $\widehat{Z}(K;(X,A))$ are filtered and the natural quotient map $Z(K;X,A)) \to \widehat{Z}(K;(X,A))$ preserves filtration by Lemma \ref{lem: filtration of a simplicial complex}, there are the associated spectral sequences for filtered spaces together with an identification of the $E^1$-term obtained directly from Lemma \ref{lem: cofibrations}.

\

In the next lemma, abbreviate $F_sZ(K;(X,A))$ by $F_sZ$, and $F_s\widehat{Z}(K;(X,A))$ by
$F_s\widehat{Z}$.  Let $(X_i,A_i), i= 1,\cdots, m$ be finite, pointed CW pairs.   The filtrations of Definition \ref{defin: filtration of polyhedral products}  are given by
\begin{enumerate}
\item $$F_tZ(K;(X,A)) = F_tZ = \cup_{wt(\sigma) \leq t} D(\sigma;(X,A)),$$ and
\item  $$F_t\widehat{Z}(K;(X,A)) = F_t\widehat{Z} = \cup_{wt(\sigma) \leq t} \widehat{D}(\sigma;(X,A))$$
\end{enumerate} have the property that  the natural quotient map
$$Z(K;X,A)) \to \widehat{Z}(K;(X,A))$$ is filtration preserving, and are natural for morphisms of simplicial
complexes $$L \to K$$ which induce an isomorphism of sets on the vertices.

\

The next step is to identify the filtration quotients $F_sZ/ F_{s-1}Z$ as well as $F_s\widehat{Z}/F_{s-1}\widehat{Z}.$
 \begin{lem} \label{lem:associated graded}

Let $(X_i,A_i), i= 1,\cdots, m$ be finite, pointed CW pairs.   The filtrations of Definition \ref{defin: filtration of polyhedral products}  have the following property. If $\sigma$ is the maximal simplex occurring in 
$$F_tZ(K;(X,A)) = F_tZ = \cup_{wt(\sigma) \leq t} D(\sigma;(X,A)),$$ 
then the natural quotient maps $$F_tZ/ F_{t-1}Z \to
 (\underline{X}/\underline{A})^{\sigma} \rtimes \underline{A}^{\sigma^c}\; \; \text{and}\;\;\;
F_t\widehat{Z}/F_{t-1}\widehat{Z} \to
 (\underline{X}/\underline{A})^{\sigma} \wedge \widehat{\underline{A}^{\sigma^c}}$$ 
 are homeomorphisms.

 \end{lem}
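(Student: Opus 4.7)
The approach is to show that $F_tZ$ is obtained from $F_{t-1}Z$ by attaching $D(\sigma)$ along the intersection, so that $F_tZ/F_{t-1}Z \cong D(\sigma)/(D(\sigma) \cap F_{t-1}Z)$, and then identify this quotient via Lemmas~\ref{lem: cofibrations} and~\ref{lem: half.smashcofibrations}. First I would verify the set-theoretic identity $D(\sigma) \cap D(\tau) = D(\sigma \cap \tau)$ inside $X_1 \times \cdots \times X_m$: a tuple $(y_1,\ldots,y_m)$ lies in both products exactly when $y_i \in A_i$ for every $i \notin \sigma \cap \tau$. Since $\sigma$ is by hypothesis the maximal simplex in $F_tZ$, any $\tau \in K$ with $wt(\tau) < wt(\sigma)$ must satisfy $\tau \cap \sigma \subsetneq \sigma$ --- otherwise $\tau \supsetneq \sigma$, but the left-lexicographical ordering puts shorter faces first, forcing $wt(\tau) > wt(\sigma)$, a contradiction. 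Consequently
\[
D(\sigma) \cap F_{t-1}Z \;=\; \bigcup_{\eta \subsetneq \sigma} D(\eta) \;=\; \Lambda(\sigma),
\]
where $\Lambda(\sigma) \subseteq D(\sigma)$ is the subspace of tuples with $y_i \in A_i$ for at least one index $i \in \sigma$.

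Next I would separate the $\sigma$-coordinates from the $\sigma^c$-coordinates: write $D(\sigma) = M \times N$ with $M = \prod_{i \in \sigma} X_i$ and $N = \underline{A}^{\sigma^c}$, and observe that $\Lambda(\sigma) = \Lambda' \times N$, where $\Lambda' \subseteq M$ consists of tuples with at least one coordinate in $A_i$. Lemma~\ref{lem: cofibrations} identifies $M/\Lambda'$ with the smash product $(\underline{X}/\underline{A})^{\sigma} = \bigwedge_{i \in \sigma}(X_i/A_i)$, while Lemma~\ref{lem: half.smashcofibrations} identifies $(M \times N)/(\Lambda' \times N)$ with $(M/\Lambda') \rtimes N$. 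Composing these homeomorphisms yields
\[
F_tZ/F_{t-1}Z \;\cong\; (\underline{X}/\underline{A})^{\sigma} \rtimes \underline{A}^{\sigma^c}.
\]

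For the polyhedral smash product, the same argument applies inside $X_1 \wedge \cdots \wedge X_m$: writing $\widehat{D}(\sigma) = M \wedge N'$ with $N' = \widehat{\underline{A}}^{\sigma^c}$, the intersection $\widehat{D}(\sigma) \cap F_{t-1}\widehat{Z}$ becomes the image of $\Lambda' \wedge N'$, and since smashing with the well-pointed CW complex $N'$ preserves the cofibration $\Lambda' \hookrightarrow M$, we obtain $\widehat{D}(\sigma)/(\Lambda' \wedge N') \cong (M/\Lambda') \wedge N' = (\underline{X}/\underline{A})^{\sigma} \wedge \widehat{\underline{A}}^{\sigma^c}$. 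The principal technical point is the combinatorial claim that no simplex in $F_tZ$ other than $\sigma$ contains $\sigma$, which is precisely where the ``shorter faces first'' clause of the left-lexicographical ordering is essential; the remaining topological content is the standard compactness--Hausdorff argument used in Lemmas~\ref{lem: cofibrations} and~\ref{lem: half.smashcofibrations} to promote continuous bijections to homeomorphisms.
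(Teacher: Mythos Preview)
Your proof is correct and follows essentially the same route as the paper: identify $F_tZ/F_{t-1}Z$ with $D(\sigma)$ modulo the ``boundary'' subspace $\bigcup_{\eta\subsetneq\sigma}D(\eta)$, then apply Lemmas~\ref{lem: cofibrations} and~\ref{lem: half.smashcofibrations}. The only difference is in how that first identification is justified: the paper packages it as a pushout diagram $Z(\partial\Delta;\xa)\to Z(\Delta;\xa)\to C$ mapping to $F_{t-1}Z\to F_tZ\to C$ and cites Grbi\'c--Theriault for the cofibration structure, whereas you prove it directly from the set-theoretic identity $D(\sigma)\cap D(\tau)=D(\sigma\cap\tau)$ together with the combinatorial observation that the left-lexicographical ordering forces $\tau\cap\sigma\subsetneq\sigma$ whenever $wt(\tau)<wt(\sigma)$. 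Your version is more self-contained and makes explicit exactly where the ``shorter faces first'' clause is used; the paper's version is terser but imports the pushout statement from an external reference.
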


\

 \begin{proof}

Suppose $F_sK$ is obtained from $F_{s-1}K$ by attaching an $n$-simplex $\Delta$.  For simplicity we may, after relabelling,  assume $\Delta$ has vertices $\{1,2 \cdots, n+1\}$.

$\Delta$ and its boundary, $\partial \Delta$ can be viewed as simplicial complexes on the vertex set $[m]$.  The vertices $\{n+2, \cdots, m\}$ are not zero simplices.  In the literature  they are referred to as ghost vertices.

It follows from \gt \cite{GT} that there is a commutative  diagram of cofibrations

\begin{equation}\label{diag:pushout}
 \begin{array}{ccccc}
Z(\partial \Delta; \xa)&\overset{j}{ \to} & Z(\Delta; \xa) & \to &C\\
\downarrow&&\downarrow&& \Vert \\
F_{s-1}Z & \to & F_sZ & \to &C
\end{array}
\end{equation}

Now  observe that

\begin{enumerate}

\item $Z(\Delta;\xa) = X_1 \times \cdots \times X_{n+1} \times A_{n+2} \times \cdots \times A_m$.

\item $Z(\partial \Delta; \xa ) = Z(\partial \overline{\Delta} ; \xa) \times  A_{n+2} \times \cdots \times A_m$
where $\partial \overline{\Delta}$ is a simplicial complex on $[n+1]$.  While $\partial \Delta$ is a simplicial complex on $[m]$ with the  set of $0-$simplices $=\{1,\cdots,n+1\}$ and ghost vertices $=\{n+2, \cdots m\}$.

\item If $\partial \overline{\Delta} \subset [n+1]$ then

$$Z(\partial \overline{\Delta}; \xa) = \underset{q}{\bigcup} X_1 \times \cdots \times \A_q \times \cdots X_{n+1}.$$

\item By lemma \ref{lem: cofibrations}, $$X_1 \times \cdots \times X_{n+1}/Z(\partial \overline{\Delta}; \xa) = (X_1/A_1) \wedge \cdots \wedge (X_{n+1}/A_{n+1}).$$

\item  It follows from Lemma \ref{lem: half.smashcofibrations} that for $A \subset X$ a CW pair the cofiber of $A  \times Y \hookrightarrow X \times Y$ is the right half smash $(X/A) \rtimes Y$.

\end{enumerate}

The following description of $C$ follows from the top row of diagram \ref{diag:pushout} and these  observations

\bigskip
 $$C = F_sZ/F_{s-1}Z= (X_1/A_1) \wedge \cdots \wedge (X_{n+1}/A_{n+1}) \rtimes  (A_{n+2} \times \cdots \times A_m).$$

 A similar argument using part (2) of Lemma \ref{lem: cofibrations} shows that

\begin{lem} \label{lemma:relhatspace}
 $$\widehat{F}_sZ/\widehat{F}_{s-1}Z = (X_1/A_1) \wedge \cdots \wedge (X_{n+1}/A_{n+1}) \wedge  (A_{n+2} \wedge \cdots \wedge A_m) = \widehat{D}(\sigma; (\underline{X/A},\underline{A}).$$
\end{lem}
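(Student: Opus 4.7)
The plan is to carry out the same argument that established the homeomorphism for $F_sZ/F_{s-1}Z$, but in the smash category throughout. Let $\sigma$ be the maximal simplex of $F_sK$, with vertex set $\{1,\ldots,n+1\}$ after relabelling, so that $\{n+2,\ldots,m\}$ are ghost vertices. The Grbi\'c--Theriault pushout of \eqref{diag:pushout} has an evident smash analogue: since the quotient $Z(K;\xa)\to\widehat Z(K;\xa)$ is filtration preserving and collapses each product of basepoint sections to a point factorwise, one obtains a commutative diagram of cofibrations
\[
\begin{array}{ccccc}
\widehat Z(\partial\Delta;\xa) & \to & \widehat Z(\Delta;\xa) & \to & \widehat C\\
\downarrow & & \downarrow & & \Vert\\
\widehat F_{s-1}Z & \to & \widehat F_sZ & \to & \widehat C
\end{array}
\]
reducing the lemma to identifying the top cofiber $\widehat C$.

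Next I would identify the two top spaces directly from the definition of the polyhedral smash product:
\[
\widehat Z(\Delta;\xa)= X_1\wedge\cdots\wedge X_{n+1}\wedge A_{n+2}\wedge\cdots\wedge A_m,
\]
while $\widehat Z(\partial\Delta;\xa)$ is the union, over proper faces of $\Delta$, of the corresponding smash products. Separating off the ghost factors, this exhibits $\widehat Z(\partial\Delta;\xa)$ as
\[
\Bigl(\bigcup_{q=1}^{n+1} X_1\wedge\cdots\wedge A_q\wedge\cdots\wedge X_{n+1}\Bigr)\wedge A_{n+2}\wedge\cdots\wedge A_m,
\]
that is, the subspace of $\widehat Z(\Delta;\xa)$ where at least one of the first $n+1$ coordinates lies in its $A_q$.

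The decisive step is the smash analogue of Lemma \ref{lem: cofibrations}: collapsing the fat-wedge subspace $\bigcup_{q} X_1\wedge\cdots\wedge A_q\wedge\cdots\wedge X_{n+1}$ inside $X_1\wedge\cdots\wedge X_{n+1}$ yields $(X_1/A_1)\wedge\cdots\wedge(X_{n+1}/A_{n+1})$. This follows by the same continuous-bijection-from-compact-to-Hausdorff argument given for Lemma \ref{lem: cofibrations}, together with the fact that forming $X_i/A_i$ factorwise commutes with taking smash products. Since smashing with the CW space $A_{n+2}\wedge\cdots\wedge A_m$ preserves the cofibration and hence its cofiber, one concludes
\[
\widehat C \;=\; (X_1/A_1)\wedge\cdots\wedge(X_{n+1}/A_{n+1})\wedge A_{n+2}\wedge\cdots\wedge A_m \;=\; \widehat D(\sigma;(\underline{X/A},\underline A)).
\]

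The main obstacle I expect is technical rather than conceptual: confirming that the smash version of the Grbi\'c--Theriault pushout is a genuine pushout of cofibrations, not just a homotopy pushout, so that the cofiber of the top row equals that of the bottom row as a homeomorphism rather than merely up to equivalence. This is where finiteness of the CW pairs $(X_i,A_i)$ is essential, as already flagged in the Remark following Theorem \ref{thm:spectral sequence}; the extension to NDR pairs should then follow mutatis mutandis.
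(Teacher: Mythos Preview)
Your proposal is correct and follows essentially the same approach as the paper. The paper's own proof of this lemma is literally the single sentence ``A similar argument using part (2) of Lemma \ref{lem: cofibrations} shows that\ldots'', and what you have written is precisely that similar argument spelled out: the smash analogue of the pushout diagram \eqref{diag:pushout}, the identification of $\widehat Z(\Delta;\xa)$ and $\widehat Z(\partial\Delta;\xa)$, and the smash version of the fat-wedge collapse in Lemma \ref{lem: cofibrations}, with the half-smash step replaced by the observation that smashing with the ghost factor $A_{n+2}\wedge\cdots\wedge A_m$ preserves cofibers.
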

\vspace {3 \jot}

This completes the description of the $E_1$ page and the proof of \ref{thm:spectral sequence}.

 \begin{comment}
 There are two natural cases to check according to the left-lexicographical ordering as follows.
Assume that $\sigma$ is the maximal simplex occurring in $F_tZ(K;(X,A)) = F_tZ = \cup_{wt(\sigma) \leq t} D(\sigma;(X,A))$ with respect to the left-lexicographical ordering of simplices. Assume that $\tau$ is the penultimate simplex
so that

\begin{enumerate}
\item $\tau < \sigma$, and
\item if $\tau \leq \lambda \leq \sigma$, then either $\tau = \lambda$, or $\lambda = \sigma$.
\end{enumerate}

In either case, the lemma follows directly from

Notice that there is a natural projection $F_tZ \to  (\underline{X}/\underline{A})^{\sigma} \rtimes \underline{A}^{\sigma^c} $ which passes to a continuous bijection
$$F_tZ/F_{t-1}Z \to  (\underline{X}/\underline{A})^{\sigma} \rtimes \underline{A}^{\sigma^c}.$$ It suffices to check that
this map is a homeomorphism, at least in special cases.

With the hypotheses that $(X_i, A_i)$ are pointed pairs of finite CW-complexes, it follows that $F_tZ/F_{t-1}Z$ is compact, and that $(\underline{X}/\underline{A})^{\sigma} \rtimes \underline{A}^{\sigma^c}$ is Hausdorff. Thus this map is a homeomorphism.

A similar argument applies to $F_t\widehat{Z}/F_{t-1}\widehat{Z}$.
\end{comment}

 \end{proof}

 \begin{cor} \label{cor: filtration of a simplicial complex}
Let $(X_i,A_i), i= 1,\cdots, m$ be finite CW pairs.   The filtrations of Definition \ref{defin: filtration of polyhedral products}  given by
\begin{enumerate}
\item $$F_tZ(K;(X,A)) = F_tZ = \cup_{wt(\sigma) \leq t} D(\sigma;(X,A)),$$ and
\item  $$F_t\widehat{Z}(K;(X,A)) = F_t\widehat{Z} = \cup_{wt(\sigma) \leq t} \widehat{D}(\sigma;(X,A))$$
\end{enumerate} have the property that  the natural quotient map
$$Z(K;X,A)) \to \widehat{Z}(K;(X,A))$$ is filtration preserving, and are natural for morphisms of simplicial
complexes $$L \to K$$ which induce an isomorphism of sets when restricted to the vertices.

Then there is a spectral sequence abutting to  $h^*(Z(K;(X,A)))$ obtained from these filtrations
for which the $E^{s,t}_0$-term is specified by

The $E^{s,t}_1$-term is specified by
\begin{enumerate}
\item $E^{s,t}_1Z(K;(X,A)) = h^{t}(F_sZ,F_{s-1}Z)=  h^{t}(\underline{X}/\underline{A})^{\sigma} \rtimes \underline{A}^{\sigma^c})$, and
\item $E^{s,t}_1\widehat{Z}(K;(X,A)) = h^{t}(F_s\widehat{Z},F_{s-1}\widehat{Z})= h^{t}( (\underline{X}/\underline{A})^{\sigma} \wedge \widehat{\underline{A}^{\sigma^c}})$.
\end{enumerate}

This spectral sequence has the following properties.

\begin{enumerate}
\item The spectral sequence is natural for embeddings of simplicial complexes $$L \subset K$$ with the same number of vertices.
\item The spectral sequence is natural for morphisms of simplicial complexes $$L \to K$$ which
are order preserving (of the left lexicographical ordering).

\

\item There is a finite filtration of $h^*(\zk)$ such that $E_{\infty}$ is the associated graded group of this filtration.

\end{enumerate}

\end{cor}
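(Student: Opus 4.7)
The plan is to present the Corollary as a direct packaging of machinery already assembled: apply the standard spectral sequence of a filtered space to the filtration of Definition \ref{defin: filtration of polyhedral products}, read off $E_1$ from Lemma \ref{lem:associated graded}, and derive naturality from Lemma \ref{lem: filtration of a simplicial complex}. Because $K$ has at most $2^m$ simplices and the pairs $(X_i,A_i)$ are finite CW pairs, the filtration $\{F_sZ\}$ (resp.\ $\{F_s\widehat{Z}\}$) is finite, and each inclusion $F_{s-1}\subset F_s$ is a cofibration. Hence the exact couple generated by the long exact sequences of the pairs $(F_sZ,F_{s-1}Z)$ converges strongly to $h^*(\zk)$, giving the abutment and claim (3) at once, with $E_\infty$ the associated graded of the induced finite filtration on $h^*(\zk)$.

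For the $E_1$-page, I would apply the cohomology theory $h^*$ to the homeomorphisms supplied by Lemma \ref{lem:associated graded}, namely $F_sZ/F_{s-1}Z \cong (\underline{X}/\underline{A})^{\sigma}\rtimes \underline{A}^{\sigma^c}$ and $F_s\widehat{Z}/F_{s-1}\widehat{Z}\cong (\underline{X}/\underline{A})^{\sigma}\wedge \widehat{\underline{A}^{\sigma^c}}$, where $\sigma$ is the unique simplex newly adjoined at filtration level $s$. This identifies $E_1^{s,t}=h^{t}(F_sZ,F_{s-1}Z)$ with the asserted cohomology of the half-smash (respectively smash) construction, and the differential $d_1$ is the connecting map of the triple $(F_{s+1}Z,F_sZ,F_{s-1}Z)$ as usual. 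Properties (1) and (2) follow from Lemma \ref{lem: filtration of a simplicial complex}: an embedding $L\subset K$ with the same vertex set, or more generally any simplicial map $L\to K$ respecting the left-lexicographical order on faces, induces a morphism of filtered spaces, hence of exact couples, hence of spectral sequences. The same lemma ensures that the quotient $Z(K;\xa)\to \widehat{Z}(K;\xa)$ is filtration preserving, yielding the comparison morphism between the two spectral sequences.

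The only point requiring care has already been dispatched in Lemma \ref{lem:associated graded}: upgrading the evident continuous bijection between filtration quotient and half-smash (or smash) from a set-theoretic identification to a genuine homeomorphism, which uses compactness of the source and the Hausdorff property of the target furnished by the finite CW hypothesis. Given that identification, the Corollary is essentially a bookkeeping statement about the spectral sequence of a finite filtration, and no further computation is required beyond tracking which simplex $\sigma$ is adjoined at each stage of the left-lexicographical filtration.
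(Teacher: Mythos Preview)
Your proposal is correct and matches the paper's approach: the corollary is presented there as an immediate consequence of the finite filtration (giving strong convergence and property (3)), Lemma~\ref{lem:associated graded} (identifying the filtration quotients and hence $E_1$), and Lemma~\ref{lem: filtration of a simplicial complex} (naturality and the filtration-preserving quotient map). The paper gives no separate argument beyond assembling these ingredients, which is exactly what you have done.
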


\bigskip
An immediate application of the spectral sequence is a computation of $h^*(K; \xa)$ as a ring when
 $h^*(X_i) \to h^*(A_i) $ is surjective for all $i$ and a freeness condition is satisfied.

 \begin{prop}\label{exm:splitting}

Suppose $(X_i,A_i)$ is a CW pair, $h^*(X_i) \to h^*(A_i) $ is surjective for all $i$ and $h^*(A_i) $ and $h^*(X_i/A_i)$ are free $h^*$ modules.  Then  $$\widetilde{h}^*(\zk) =
\underset{\sigma \in K }{\bigoplus} \widetilde{h}^*(  (\underline{X}/\underline{A})^{\sigma} \rtimes \underline{A}^{\sigma^c} ).$$
\end{prop}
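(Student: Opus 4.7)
My plan is to apply the spectral sequence of Theorem \ref{thm:spectral sequence}(1) and show that under the stated hypotheses it collapses at $E_1$, then verify that the induced filtration on $\widetilde h^*(\zk)$ actually splits as a direct sum. First I would unpack the hypotheses on each pair $(X_i,A_i)$. Surjectivity of $h^*(X_i)\to h^*(A_i)$ combined with freeness of $h^*(A_i)$ forces the surjection to split as an $h^*$-module map, so the long exact sequence of the pair breaks into split short exact sequences
\[
0 \to \widetilde h^*(X_i/A_i) \to h^*(X_i) \to h^*(A_i) \to 0
\]
with connecting homomorphism $\delta_i = 0$. Because $h^*(A_i)$ and $\widetilde h^*(X_i/A_i)$ are free, the K\"unneth formula identifies each summand $\widetilde h^*\bigl((\underline{X}/\underline{A})^{\sigma} \rtimes \underline{A}^{\sigma^c}\bigr)$ as a tensor product of free modules, so every $E_1$-summand is itself free over $h^*$.

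Next I would show $d_1 = 0$. Via the Grbi\'c--Theriault pushout of diagram \ref{diag:pushout}, which underlies the associated graded identification of Lemma \ref{lem:associated graded}, the differential $d_1$ at a simplex $\sigma$ is the coboundary of the cofiber sequence attaching $D(\sigma)$ to $F_{s-1}Z$, and on a tensor-product generator of $\widetilde h^*\bigl((\underline{X}/\underline{A})^{\sigma} \rtimes \underline{A}^{\sigma^c}\bigr)$ it operates coordinate-by-coordinate through the connecting homomorphisms $\delta_i$. Since each $\delta_i = 0$, we conclude $d_1 = 0$.

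The main obstacle is controlling the higher differentials and upgrading the $E_\infty = E_1$ identification to a genuine direct-sum decomposition of $\widetilde h^*(\zk)$, rather than merely the associated graded of a filtration. I would handle both at once by invoking the stable splitting of Theorem \ref{thm:decompositions.for.general.moment.angle.complexes}, which already provides a genuine isomorphism
\[
\widetilde h^*(\zk) \cong \bigoplus_{I \subseteq [m]} \widetilde h^*\bigl(\widehat Z(K_I;(\underline{X}_I,\underline{A}_I))\bigr).
\]
Lemma \ref{lem: cofibrations.two} together with the identity $X \rtimes Y = X \wedge Y_+$ stably decomposes $(\underline{X}/\underline{A})^{\sigma} \rtimes \underline{A}^{\sigma^c}$ as a wedge of smash products $(\underline{X}/\underline{A})^{\sigma} \wedge \widehat{\underline{A}}^{\,J}$ indexed by $J \subseteq \sigma^c$, and regrouping by $I = \sigma \sqcup J$ identifies the claimed sum in the Proposition with $\bigoplus_{I} \widetilde h^*\bigl(\widehat Z(K_I;(\underline{X}_I,\underline{A}_I))\bigr)$, provided each $\widetilde h^*\bigl(\widehat Z(K_I;(\underline{X}_I,\underline{A}_I))\bigr)$ equals the corresponding $E_1$ from Theorem \ref{thm:spectral sequence}(2). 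This reduces to the analogous collapse-and-split statement for the smash spectral sequence, which is handled by the same $\delta_i = 0$ computation, and freeness of the $E_1$-summands rules out any extension issue.
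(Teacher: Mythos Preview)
Your route is more circuitous than the paper's and leaves a real gap. The paper's argument is essentially two lines: surjectivity of $h^*(X_i)\to h^*(A_i)$ makes $\widetilde h^*(X_i/A_i)\to h^*(X_i)$ injective, and by K\"unneth (freeness) this makes the top-row map $h^*(C)\to h^*(Z(\Delta;\xa))$ in diagram \eqref{diag:pushout} injective. Commutativity of the pushout square then forces $j\colon h^*(C)\to h^*(F_sZ)$ in the bottom row to be injective as well, whence the exact-couple coboundary $\delta$ vanishes at every stage. That single observation kills \emph{all} $d_r$ simultaneously and breaks each filtration long exact sequence into a short exact sequence; these split inductively since $h^*(F_{s-1}Z)$ is free, so the direct-sum statement follows without any appeal to stable splittings.

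Your step 2 asserts that $d_1$ ``operates coordinate-by-coordinate through the connecting homomorphisms $\delta_i$,'' but this is the content of Theorem \ref{diff} and is not an immediate consequence of the pushout; making it precise requires unwinding the identification of Lemma \ref{lem:associated graded} against the triple coboundary, which you have not done. Even granting $d_1=0$, your steps 3--4 do not resolve the higher $d_r$: the stable splitting reduces matters to the smash spectral sequences for $\widehat Z(K_I)$, but those require the very collapse argument you have deferred, so the reduction is circular. The cleaner move---and you are one step from it---is to use the pushout square not to describe $d_1$ but to prove $j$ injective, hence $\delta=0$ in the exact couple; everything else then follows at once.
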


\begin{proof}  The hypothesis implies $\widetilde{h}^*(X_i/A_i)\to h^*(X_i)$ is injective, and by the  freeness assumption, 
$h^*(C) \to h^*( Z(\Delta; \xa) ) $ in the following diagram is injective.

$$ \begin{array}{ccccccccccc}
0\leftarrow &h^*(Z(\partial \Delta; \xa)) & \leftarrow  &h^*( Z(\Delta; \xa) ) & \leftarrow & h^*(C) & \leftarrow0 \\
&\uparrow&&\uparrow&& \Vert \\
\leftarrow & h^*(Z(K_{q-1} , \xa)) & \leftarrow &h^*( Z(K_q,\xa) )&\overset{j}{ \leftarrow} & h^*(C) &\overset{\delta}{ \leftarrow}
\end{array}$$
which implies $j$ is injective. Hence $\delta=0$.  This implies the differentials in the spectral sequence are zero.

\end{proof}

\

 This is particularly interesting in the cases where the surjectivity or the freeness conditions are  satisfied  for $h^*$, but not for ordinary cohomology.  For example $h^*=K^*$ and  $(X_i,A_i) = \newline (SO(2n+1), SO(2n))$. 
 
 \

Specializing   $h^*$  to ordinary cohomology with coefficients in a field we can now prove  theorem \ref{thm:Cartan_and_split.monomorphisms}.     The surjectivity condition of Proposition \ref{exm:splitting}  implies $H^*(X_i/A_i)$ is a subring of $H^*(X_i).$  Therefore  $I$ in the following corollary is an ideal in $H^*(X_1) \otimes \cdots \otimes H^*(X_m).$

\begin{cor} Assume $H^*$ is cohomology with coefficients in a field and that  $(X_i,A_i)$ is a CW pair, such that  $H^*(X_i) \to H^*(A_i) $ is surjective for all $i$.  Then there is a ring isomorphism

$$H^*(\zk) = H^*(X_1) \otimes \cdots \otimes H^*(X_m)/I$$ where $I$ is the ideal generated by
$\widetilde{H}^*(X_{j_1}/A_{j_1}) \otimes \cdots \otimes \widetilde{H}^*(X_{j_t}/A_{j_t}), $  with $(j_1,\cdots,j_t)$ not spanning  a simplex in $K$.

\end{cor}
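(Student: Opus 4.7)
The plan is to deduce the corollary from Proposition \ref{exm:splitting} together with a naturality argument that compares $\zk$ with the ambient product $X^{[m]} = Z(\Delta[m-1]; \xa)$. Since cohomology is taken with field coefficients, the surjectivity hypothesis $H^*(X_i) \twoheadrightarrow H^*(A_i)$ forces the long exact sequence of each pair $(X_i, A_i)$ to split into short exact sequences. So $H^*(X_i) \cong H^*(A_i) \oplus \widetilde{H}^*(X_i/A_i)$, with $\widetilde{H}^*(X_i/A_i)$ embedded as the kernel of the surjection (an ideal of $H^*(X_i)$). In particular both $H^*(A_i)$ and $\widetilde{H}^*(X_i/A_i)$ are free over the field, so the freeness hypothesis of Proposition \ref{exm:splitting} is automatic.

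Applying Proposition \ref{exm:splitting} to $\zk$ and invoking the half-smash splitting $\Sigma(X \rtimes Y) \simeq \Sigma(X \wedge Y) \vee \Sigma X$ from Lemma \ref{lem: cofibrations.two}, each $E_1$-summand simplifies via Künneth to
$$\widetilde{H}^*\bigl((\underline{X}/\underline{A})^{\sigma} \rtimes \underline{A}^{\sigma^c}\bigr) \;\cong\; \widetilde{H}^*\bigl((\underline{X}/\underline{A})^{\sigma}\bigr) \otimes H^*(\underline{A}^{\sigma^c}).$$
The same analysis applied to the full simplex $\Delta[m-1]$ recovers the tensor product $H^*(X^{[m]}) = \bigotimes_i H^*(X_i)$, now re-expressed as the direct sum
$$H^*(X^{[m]}) \;=\; \bigoplus_{\sigma \subseteq [m]} \widetilde{H}^*\bigl((\underline{X}/\underline{A})^{\sigma}\bigr) \otimes H^*(\underline{A}^{\sigma^c})$$
by choosing the $\widetilde{H}^*(X_i/A_i)$-summand of $H^*(X_i)$ at positions $i \in \sigma$ and the $H^*(A_i)$-summand at positions $i \notin \sigma$. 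I would then use naturality of the spectral sequence with respect to the inclusion of simplicial complexes $K \subset \Delta[m-1]$ (Lemma \ref{lem: filtration of a simplicial complex}, and Corollary \ref{cor: filtration of a simplicial complex}) together with the induced inclusion $\zk \hookrightarrow X^{[m]}$, to conclude that the restriction map $r: H^*(X^{[m]}) \to H^*(\zk)$ is diagonal in these decompositions: it is the identity on the $\sigma$-summand when $\sigma \in K$, and zero when $\sigma \notin K$, because no such $E_1$-summand occurs for $\zk$.

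It remains to identify $\ker r$ with the ideal $I$. On the one hand, every summand $\widetilde{H}^*((\underline{X}/\underline{A})^{\tau}) \otimes H^*(\underline{A}^{\tau^c})$ with $\tau = (j_1,\dots,j_t) \notin K$ is generated, as an element of the tensor algebra, by pure tensors $y_{j_1} \otimes \cdots \otimes y_{j_t}$ with $y_{j_\ell} \in \widetilde{H}^*(X_{j_\ell}/A_{j_\ell})$, so these summands lie in $I$. Conversely, since each $\widetilde{H}^*(X_i/A_i) \subset H^*(X_i)$ is an ideal, multiplying such a generator by an arbitrary element of $H^*(X^{[m]})$ produces a sum of $\tau'$-summands with $\tau' \supseteq \{j_1,\dots,j_t\}$; because $K$ is downward closed, every such $\tau'$ is again a non-simplex, so $I$ is contained in the direct sum of $\tau$-summands with $\tau \notin K$. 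Combining the two inclusions, $\ker r = I$ and $r$ descends to the desired ring isomorphism $H^*(X^{[m]})/I \xrightarrow{\cong} H^*(\zk)$.

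The main obstacle is the diagonality of $r$ in the two decompositions. This is not a formal consequence of the additive splittings alone: one must verify that the identification of the filtration quotients in Lemma \ref{lem:associated graded} is natural with respect to the inclusion $\zk \hookrightarrow X^{[m]}$, so that the projection of $H^*(X^{[m]})$ onto its $\sigma$-summand agrees under $r$ with the corresponding projection of $H^*(\zk)$. Once this naturality step is secured, the rest of the argument is a straightforward manipulation of the tensor-algebra decomposition.
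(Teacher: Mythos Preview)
Your proposal is correct and follows essentially the same route as the paper: split $H^*(X_i) \cong H^*(A_i) \oplus \widetilde{H}^*(X_i/A_i)$ using the surjectivity hypothesis, expand $H^*(X^{[m]})$ accordingly, and invoke Proposition~\ref{exm:splitting} to identify $H^*(\zk)$ with the sub-sum over $\sigma \in K$, so that the restriction map has kernel exactly $I$. The paper's proof is terser---it simply asserts that the natural ring map $H^*(X^{[m]}) \to H^*(\zk)$ is surjective with kernel $I$ once the tensor-algebra decomposition is in hand---whereas you supply the naturality argument via the inclusion $K \subset \Delta[m-1]$ and the two-sided inclusion for $\ker r = I$; your flagged ``obstacle'' about diagonality is exactly the point the paper leaves implicit, and it is handled by the collapse of both spectral sequences at $E_1$ together with the naturality in Corollary~\ref{cor: filtration of a simplicial complex}.
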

\begin{proof}  The hypothesis implies there is a split short exact sequence
$$0 \to \widetilde{H}^*(X_i/A_i) \to H^*(X_i) \to H^*(A_i) \to 0$$for all $*>0$.

After choosing a splitting we may write $H^*(X_i) = H^*(A_i) \oplus \widetilde{H}^*(X_i/A_i)$.

\
 The tensor product
 $H^*(X_1) \otimes \cdots \otimes H^*(X_m)$ may now be written as a sum of terms of the form
 $H^*(Y_1) \otimes \cdots \otimes H^*(Y_m)$ where $Y_i$ is $A_i$ or $X_i/A_i$.  In particular Proposition \ref{exm:splitting} implies  the natural map of
 rings $ H^*(X_1) \otimes \cdots \otimes H^*(X_m) \to H^*(\zk)$ is surjective with kernel given by the ideal $I$.
 \end{proof}
The Stanley-Reisner ring is the cohomology ring in the 
special case, $\xa = (CP^{\infty},\ast)$

\

\begin{exm}
 It follows from \cite{Coxeter} and   \cite[Example 6.40]{buchstaber.panov} that $Z(K; (D^1,S^0))$ is an orientable surface  if $K$ is the boundary of an $n$-gon. The next application of the spectral sequence is to determine the genus of this surface.  The genus is determined by the Euler characteristic of $H^*(Z(K; (D^1,S^0))$ which is the Euler characteristic of the $E^1$ page.

The boundary of an $n-$gon has $n$ $0$-simplicies and $n$  $1$-simplicies.  To compute the Euler characteristic,  the ranks of $\widetilde{H}^*(  (\underline{X}/\underline{A})^{\sigma} \rtimes \underline{A}^{\sigma^c} )$ are computed.

 \begin{enumerate}

\item
For $\sigma =\emptyset$, $H^*(  (\underline{X}/\underline{A})^{\sigma} \rtimes \underline{A}^{\sigma^c}) =  H^*(\overbrace{S^0 \times \cdots \times S^0}^n) $ where $\overbrace{S^0 \times \cdots \times S^0}^n$ is $2^n$ distinct points. \newline  So the unreduced homology has  $2^n$ $0$-dimensional classes.
\
\

\item
For $\sigma $ a 0-simplex

\begin{setlength}{\multlinegap}{100pt}
\begin{multline*}
(\underline{X}/\underline{A})^{\sigma} \rtimes \underline{A}^{\sigma^c} =\\
 S^1 \rtimes \overbrace{S^0 \times \cdots \times S^0}^{n-1}=\\
S^1 \wedge ( 2^{n-1} \mbox{ points })_+ = S^1 \wedge (\underset{2^{n-1}}{\vee} S^0) $$
\end{multline*}
\end{setlength}

So  $\widetilde{H}^*((\underline{X}/\underline{A})^{\sigma} \rtimes \underline{A}^{\sigma^c)}$
 has  $ 2^{n-1}$ $1-$ dimensional classes.  There are $n$ $0$-simplices so there are a total of   $n(2^{n-1})$ $1$-dimensional classes.

\
\
\item If $\sigma $ is a 1-simplex, the computation is similar.
 
 \begin{setlength}{\multlinegap}{100pt}
  \begin{multline*}
   (\underline{X}/\underline{A})^{\sigma} \rtimes \underline{A}^{\sigma^c} =\\
   S^2 \rtimes \overbrace{S^0 \times \cdots \times S^0}^{n-2}=\\
S^2 \wedge ( 2^{n-2} \mbox{ points })_+ = S^2 \wedge (\underset{2^{n-2}}{\vee} S^0)$$
\end{multline*}
\end{setlength}
which contributes  $2^{n-2}$ $2-$ dimensional classes.  There are $n$ $1$-simplices.  So there are a total  of $n(2^{n-2})$ 2-cells.
\end{enumerate}

So the Euler charateristic of  $E_1$ and hence of $Z(K; (D^1,S^0))$ is  $(4-n)2^{n-2}$.  This proves a theorem of Coxeter, \cite{Coxeter},  i.e. if $K$ is the boundary of an $n$-gon
 $Z(K; (D^1,S^0))$ is a surface of genus  $1 + (n -4)2^{n-3}$.

 \

  M. Davis first computed the Euler characteristic of $Z(K;(D^1,S^0))$ \cite{Davis}, but the analogous spectral sequence argument as given in the above example also gives the following result.
  
  \

 \begin{prop}\label{prop:genus} if $K$ is a simplicial complex with $m$ vertices which has $t_n$ n-simplices then

 $$\chi(Z(K;(D^1,S^0)) = \Sigma (-1)^{n+1} t_n 2^{m-n-1}.$$
 (The empty simplex is considered to be a $(-1)$-simplex,  $t_{-1}=1$).

  \end{prop}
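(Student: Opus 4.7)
The plan is to follow the $n$-gon computation in the preceding example and carry it out for an arbitrary simplicial complex $K$ on $m$ vertices, taking $\xa$ to be the constant pair $(D^1,S^0)$ on every vertex. Since the filtration of Lemma~\ref{lem: filtration of a simplicial complex} is finite and every differential of the spectral sequence of Theorem~\ref{thm:spectral sequence} preserves Euler characteristic, $\chi(Z(K;(D^1,S^0)))$ equals the alternating-sum Euler characteristic of the $E_1$-page, which by Lemma~\ref{lem:associated graded} is a sum of contributions indexed by the faces of $K$ (including the empty face).

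First I would identify the contribution of each non-empty simplex. For $\sigma \in K$ of dimension $n$ (so $|\sigma|=n+1$), the associated graded piece is
$$(\underline{X}/\underline{A})^\sigma \rtimes \underline{A}^{\sigma^c} \;=\; (D^1/S^0)^{\wedge(n+1)} \rtimes (S^0)^{m-n-1} \;=\; S^{n+1} \rtimes (S^0)^{m-n-1}.$$
The factor $(S^0)^{m-n-1}$ is a pointed discrete set with $2^{m-n-1}$ elements, so by direct inspection of the half-smash (collapsing $\{*\}\times(S^0)^{m-n-1}$ in $S^{n+1}\times(S^0)^{m-n-1}$) this space is homeomorphic to $\bigvee_{2^{m-n-1}} S^{n+1}$ and has reduced Euler characteristic $(-1)^{n+1}\cdot 2^{m-n-1}$. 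Multiplying by $t_n$ and summing over $n\geq 0$ gives the total contribution $\sum_{n\geq 0} t_n(-1)^{n+1}2^{m-n-1}$.

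Next I would account for the empty face separately. The base of the filtration is $F_0 = A^m = (S^0)^m$, a discrete set of $2^m$ points whose (unreduced) Euler characteristic contributes $\chi(F_0)=2^m$ to $\chi(Z)$. Under the convention $t_{-1}=1$, this is precisely the $n=-1$ term of the claimed formula, since $(-1)^{(-1)+1}\cdot 1\cdot 2^{m-(-1)-1}=2^m$.

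Combining the two contributions gives
$$\chi(Z(K;(D^1,S^0))) \;=\; 2^m + \sum_{n\geq 0} t_n(-1)^{n+1}2^{m-n-1} \;=\; \sum_{n\geq -1}(-1)^{n+1}t_n 2^{m-n-1},$$
which is the stated identity. The only delicate point is the bookkeeping at $\sigma=\emptyset$, where the relevant filtration quotient is $F_0$ itself rather than a genuine pointed quotient, so its unreduced Euler characteristic must be used; once this is noted, the rest of the proof is an immediate consequence of the spectral sequence together with the half-smash identification $S^{n+1}\rtimes(S^0)^{k}\simeq \bigvee_{2^k} S^{n+1}$.
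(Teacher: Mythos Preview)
Your proof is correct and follows essentially the same approach the paper indicates: it is precisely the ``analogous spectral sequence argument'' from the $n$-gon example, carried out for an arbitrary $K$. You correctly identify the filtration quotients as $S^{n+1}\rtimes(S^0)^{m-n-1}\simeq\bigvee_{2^{m-n-1}}S^{n+1}$, and your handling of the $\sigma=\emptyset$ term (using the unreduced Euler characteristic of $F_0=(S^0)^m$) matches what the paper does in the example.
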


\end{exm}
\section{\bf Computing the differentials.}\label{sec:spectral sequence}

For CW pairs  satisfying  a freeness condition, the differentials in the spectral sequence are shown to be determined by the coboundaries of the long exact sequences of the pairs $(X_i,A_i)$.    This result is used  to  compute the generalized cohomology of $\zkh$.

\

We recall the description of the $E_1$ page of the spectral sequences constructed in section \ref{sec:spectral sequence one}.

     \begin{equation} \label{thm:h*spectral sequence} \end{equation}
        
  \begin{enumerate}
\item
   $E_r(\zk) \Rightarrow \widetilde{h}^*(\zk) $ with

  $$E_1(\zk) = \underset{\sigma \in K}{\bigoplus} \widetilde{h}^*( ( \underline{X}/\underline{A})^{\sigma}
   \rtimes ( A^{\sigma^c})).$$

      \item
         $E_r(\zkh) \Rightarrow \widetilde{h}^*(\zkh) $ with

  $$E_1(\zkh) = \underset{\sigma \in K}{\bigoplus} \widetilde{h}^*( ( \underline{X}/\underline{A})^{\sigma}
   \wedge ( \widehat{A}^{\sigma^c}).$$
   \end{enumerate}

\begin{comment}
   The bi-degree $\{s,t\}$ in either spectral sequence is specified by $s=$ the index of the simplex $\sigma$ is the left-lexicographical order and $t=$ the cohomological degree.  The differential
$d_r$ defined on a class supported on a simplex of index $s$ increases $t$ by $1$ and has target a class supported on a simplex of index $s+r$.
\end{comment}

\

We next compute the differentials in the spectral sequence.  In order to do so  the strong freeness assumption, Definition \ref{def:bas}, on the pairs $(X_i,A_i)$ is imposed. 

\ 

Using the K\"{u}nneth theorem the $E_1$ page of the spectral spectral sequence converging to $\widetilde{h}^*(\zkh)$ is isomorphic  to a direct sum of $h^*-$ modules

$$ \widetilde{h}^*( ( \underline{X} /\underline{A})) ^{\sigma}
   \bigotimes   \underset{ j_i \notin \sigma}\otimes (\widetilde{h}^*(A_{j_i}))$$

 \bigskip

    \

Each summand  is a tensor product of the cohomology of $X_i/A_i$ (which is a sum of $C_i$ and $W_i$)  and the cohomology of $A_i$ (which is the sum of $E_i$ and $B_i$).   After expanding the tensor product the $E_1$ page becomes a sum of tensor products of $E_i, C_i, B_i$ and $W_i$.    Hence  any class in $ E^{s,t}_1$ is a sum of classes   $$  y_1  \otimes \cdots \otimes y_m$$
with  $y_j \in E_i, C_i, B_i$ or $W_i$.

\

There are coboundary maps $$\delta_i: H^*(A_i) \supset E_i  \to W_i \subset  H^{*+1}( X_i/A_i).$$

\begin{defin}\label{coboundary}
There is a coboundary map, $\delta$ defined on the $h^*-$generators $y_1  \otimes \cdots \otimes y_m$ of 
$$\widetilde{h}^*( ( \underline{X} /\underline{A})) ^{\sigma}
\bigotimes   \underset{ j_i \notin \sigma}\otimes (\widetilde{h}^*(A_{j_i})) $$ by the coboundary maps $\delta_i$ and the graded Leibniz rule.

\end{defin}

A monomial, $$y_1  \otimes \cdots \otimes y_m \in \widetilde{h}^*( ( \underline{X} /\underline{A})) ^{\sigma}
\bigotimes   \underset{ j_i \notin \sigma}\otimes (\widetilde{h}^*(A_{j_i}))$$ defines a simplex, $\sigma(y_1  \otimes \cdots \otimes y_m)$ as follows.  There are two indexing sets determined by $ y_1  \otimes \cdots \otimes y_m$.

$$ I_1 = \{i \in [m] |\quad  y_i \in C_i\}$$
$$I_2 =\{ i \in [m] |\quad  y_i \in W_i\}$$

\

The $h^*$-modules $C_i$ and $W_i$ are summands of $\widetilde{h}^*(X_i/A_i)$.   The $E_1$ page of the spectral sequence  is a sum of terms with factors of  $\widetilde{h}^*(X_i/A_i)$  indexed by the  simplices of $K$.   It follows that   $\sigma(y_1  \otimes \cdots \otimes y_m) = I_1 \cup I_2$   is a simplex of $K$. 

\

The  weight of $\sigma(y_1  \otimes \cdots \otimes y_m)$ in the left-lexicographical ordering of the simplices of $K$ is the filtration of $y_1  \otimes \cdots \otimes y_m$ in the spectral sequence. The simplex $\sigma(y_1  \otimes \cdots \otimes y_m)$   is called the support of $y_1  \otimes \cdots \otimes y_m$.

  \begin{thm} \label{diff}

  Asssume $(\underline{X},\underline{A})$ satisfies the strong freeness condition in definition \ref{def:bas}, then

$$y = \underset{\ell}{\Sigma} \quad  y_{1_{\ell}} \cdots y_{m_{\ell}}$$ survives to $E^{s,*}_r$ if

  \begin{enumerate}
 \item
   
   $\sigma( y_{1_{\ell}} \cdots y_{m_{\ell}})$ has weight $s$ for all $\ell$ 
   
   \

   	and,  with $\delta$ as in Definition \ref{coboundary} 
   	
   	\

   	\item 
   $$\delta(y) = \underset{t}{\Sigma} \quad  
        \overline{y}_{1_{t}} \cdots \overline{y}_{m_{t}}$$
   where 
   $$\mbox{ weight } \sigma(\overline{y}_{1_{t}} \cdots \overline{y}_{m_{t}})\geq s+r$$
   for all $t$.
   
   \end{enumerate}
    \
     Then $$d_r(y) = \underset{wt(\sigma(\overline{y}_{1_{t}} \cdots \overline{y}_{m_{t}}))=s+r}{\Sigma}  \overline{y}_{1_{t}} \cdots \overline{y}_{m_{t}}$$

   \end{thm}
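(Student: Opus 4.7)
The plan is to construct explicit cocycle representatives for classes in $E_1$ using the strong freeness decomposition, and then track how the filtration coboundary acts on them. Under the strong $h^*$-freeness assumption, each $h^*(A_i)$ splits as $E_i \oplus B_i$ and each $\widetilde{h}^*(X_i/A_i)$ splits as $C_i \oplus W_i$, with $\delta_i: E_i \xrightarrow{\cong} W_i$ the only nontrivial component of the pair's coboundary. By the K\"{u}nneth theorem (applicable since all modules are free of finite type), the $E_1$-term decomposes naturally as a direct sum of tensor monomials $y_1 \otimes \cdots \otimes y_m$ with $y_i$ in one of $C_i, W_i, E_i, B_i$; such a monomial sits in the summand of $E_1$ indexed by the simplex $\sigma(y) = \{i : y_i \in C_i \cup W_i\} \in K$, and its filtration degree $s$ equals $\mathrm{wt}(\sigma(y))$.

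First I would identify $d_1$ using the cofiber sequence $F_{s-1} \hookrightarrow F_s \to F_s/F_{s-1}$ together with Lemma \ref{lem:associated graded}, which identifies the filtration quotient with $(\underline{X}/\underline{A})^\sigma \wedge \widehat{\underline{A}}^{\sigma^c}$. When the next simplex $\sigma' = \sigma \cup \{i\}$ is attached, Lemmas \ref{lem: cofibrations} and \ref{lem: half.smashcofibrations} identify the attaching data geometrically with the pair cofibration $A_i \to X_i \to X_i/A_i$ smashed with the remaining factors. The resulting connecting morphism at the level of cohomology is, by naturality of the K\"{u}nneth splitting, precisely $\mathrm{id} \otimes \cdots \otimes \delta_i \otimes \cdots \otimes \mathrm{id}$ with the appropriate Koszul sign. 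Summing over all vertices $i \notin \sigma$ that give a next simplex in the left-lexicographical order yields exactly the graded Leibniz expansion $\delta(y_1 \otimes \cdots \otimes y_m) = \sum_i \pm y_1 \otimes \cdots \otimes \delta_i(y_i) \otimes \cdots \otimes y_m$ restricted to its weight-$(s+1)$ part; contributions with $\sigma(y) \cup \{i\} \notin K$ vanish because those summands are absent from $E_1$.

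For $r > 1$ I would argue inductively using the standard spectral-sequence recipe. A class $y \in E_1^{s,t}$ survives to $E_r^{s,t}$ precisely when every term of $\delta(y)$ supported on a simplex of weight strictly less than $s+r$ is zero, for then, by the freeness splittings, $y$ admits an honest cocycle lift $\tilde y$ into $h^*(F_{s+r-1})$ built by successively cancelling lower-weight coboundary terms with representatives chosen via the $E_i \xrightarrow{\cong} W_i$ isomorphism. Applying the absolute coboundary to $\tilde y$ and projecting into $F_{s+r}/F_{s+r-1}$ yields $d_r(y)$. Because the successive lifts were constructed purely from the algebraic Leibniz expansion of $\delta$, the output is precisely the weight-$(s+r)$ part of $\delta(y)$, establishing the theorem.

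The main obstacle is the compatibility verification in $d_1$: one must check that the geometric connecting morphism of the filtration's cofiber sequence really does produce the algebraic $\delta$ with correct signs and Leibniz behavior across the different simplices whose attachments occur at that filtration step. Once this is secured --- through a careful application of Lemmas \ref{lem: cofibrations}, \ref{lem: cofibrations.two}, \ref{lem: half.smashcofibrations} and naturality in the pairs $(X_i, A_i)$ --- the higher differentials follow by the inductive lift-and-coboundary procedure together with the fact that the strong freeness hypothesis makes every required lift explicit and avoids any extension problems.
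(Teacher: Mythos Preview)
Your proposal is correct and follows essentially the same approach as the paper: both arguments reduce the spectral-sequence differentials to the Leibniz-rule coboundary $\delta$ built from the pairwise $\delta_i\colon E_i\to W_i$, using the strong freeness to decompose $E_1$ into tensor monomials and the filtration cofiber sequences to identify the connecting maps. The only presentational difference is that the paper inducts on the filtration index $s$ and factors the coboundary through the map $\theta\colon F_{s-1}\to \widetilde h^*(\widehat Z(\partial\Delta;\xa))$ coming from the pushout square of Lemma~\ref{lem:associated graded}, so that the relevant $\widetilde\delta$ is the coboundary of the single cofibration $\widehat Z(\partial\Delta)\to\widehat Z(\Delta)\to\widehat D(\sigma_s)$, whereas you induct on the page $r$ and build explicit lifts; these are two packagings of the same exact-couple computation.
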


\bigskip

   \begin{exm}\label{ssd2}
  We illustrate Theorem \ref{diff} for $h^*=H^*$,  $K$ a  simplicial complex on $3$ points, i.e. $K=$ $3$ distinct points,  $K=$ an edge and a disjoint point, $K=$ two edges meeting at a common vertex,  $K=$ the boundary of the $2$ simplex and finally $K=$ the $2$ simplex.  $\xa =(D^1,S^0)$.  There are the generators, $e_0 \in \widetilde{H}^0(S^0) $ and $w_1 \in \widetilde{H}^1(S^1)$.

  We first build up the spectral sequence for three distinct points  then add one edge at a time followed by the two simplex.

  \begin{itemize}
  \item Three distinct points.

 $$ \begin{array}{ccccc}
  \mbox {filtration}& 0 &1&2&3\\
   & e_0 \otimes e_0 \otimes e_0& w_1 \otimes e_0 \otimes  e_0 & e_0 \otimes w_1 \otimes e_0 & e_0 \otimes e_0\otimes w_1
 \end{array}$$

 \ref{diff} implies there is a differential from filtration $0$ to filtration $1$.

 \item Add one edge.

 $$\begin{array}{ccccccccc}

  \mbox {filtration}& 0& &1&&2&3&4\\
   & 0&& 0 && e_0 \otimes w_1 \otimes e_0 & e_0 \otimes e_0\otimes w_1&w_1 \otimes w_1 \otimes e_0
 \end{array}$$
 \ref{diff} implies a differential from filtration 2 to filtration 4.

 \item Add another edge

 $$\begin{array}{cccccccc}

  \mbox {filtration}& 0 &1&2&3&4&5\\
   & 0& 0 & 0& e_0 \otimes e_0\otimes w_1&0& w_1 \otimes e_0 \otimes w_1
 \end{array}$$
   \ref{diff}  implies a differential from filtration 3 to 5.

  \item Add another edge to form  $\partial \Delta^2$

   $$\begin{array}{cccccccc}
 \mbox {filtration}& 0 &1&2&3&4&5&6 \\
   & 0& 0 & 0& 0 &0& 0 &e_0 \otimes w_1 \otimes w_1
 \end{array}$$

 \item Add $\Delta^2$

  $$\begin{array}{ccccccccc}
 \mbox {filtration}& 0 &1&2&3&4&5&6 &7\\
   & 0& 0 & 0& 0 &0& 0 &e_0 \otimes w_1 \otimes w_1 & w_1 \otimes w_1 \otimes w_1
 \end{array}$$
 There is a differential from filtration 6 to filtration 7.
  \end{itemize}
  \end{exm}

\begin{proof}[Proof of  Theorem \ref{diff}.] The filtration in the spectral sequence is induced by the\\
 left-lexicographical order of the  simplices which are added one at a time.  The weight of a simplex is its position in this order. In particular the empty simplex has index $0$.  The proof is an induction on the index of the simplex being added.  In the proof,  $\widetilde{h}^*(F_s \widehat{K})$ is written $F_s$.  Inductively assume $F_{s-1}$ is given by Theorem \ref{diff}.  The induction starts with $F_0 = \widetilde{h}^*(A_1) \otimes \cdots \otimes \widetilde{h}^*(A_m)$.

	\

	Recall the definition of the  differentials in the spectral sequence.
	The spectral sequence is induced by the exact couple
	
	\bigskip
	\begin{center}
		
		\begin{tikzpicture}
		
		\draw [<-, thick ] (1.1,2)node[above ]{$F_t$} --( 0,-2)node[below ] {$\widetilde{h}^*(\widehat{D}(\sigma_t; (\underline{X}/\underline{A},\underline{A}))$}
		node[pos=0.4 ,right]{$\widetilde{h}^*(j)$}
		node[pos=1.05]{$\bullet$}
		node[pos=-0.01]{$\bullet$};
		\draw [<-, thick] (-2.9,2.0)node[above]{$F_{t-1}$} -- (.8,2)
		node[pos=0.5, left, above]{$\widetilde{h}^*(i)$}
		node[pos=-.025]{$\bullet$};
		\draw[->,thick] (-3,2)--(-.1,-2)node[pos=0.4, left]{$\delta$};
		
		\end{tikzpicture}
	\end{center}
	\bigskip
	Where the maps are induced by the inclusions, $i: F_{t-1} \widehat{K} \to F_t\widehat{K}$,  and the projections  $j:F_t \widehat{K} \to F_t \widehat{K}/F_{t-1}\widehat{K} = \widehat{D}(\sigma_t; (\underline{X}/\underline{A},\underline{A}))$  where $\widehat{D}(\sigma_t; (\underline{X}/\underline{A},\underline{A}))$ is defined in Lemma \ref{lemma:relhatspace}.  $\delta $ denotes the coboundary map.
	
	\vspace {5 \jot}
	
	The differential on a class $\alpha \in \widetilde{h}(\widehat{D}(\sigma_t; (\underline{X}/\underline{A},\underline{A})))$ which has survived to  $E_{s-t}$  is defined by  pulling $\widetilde{h}^*(j)(\alpha)$ back to  $\beta \in F_{s-1}$ followed by the coboundary.  i.e.  $ d_{s-t}(\alpha)= \gamma = \delta(\beta)$
	
	\vspace {10 \jot}
	
	\begin{tikzpicture}
	
	\draw [<-, thick ] (1,2)node[above ]{$F_t$} --( 0,-2)node[below left] {$\alpha \in \widetilde{h}^*(\widehat{D}(\sigma_t; (\underline{X}/\underline{A},\underline{A})))$}
	node[pos=0.4 ,left]{$\widetilde{h}^*(j)$}
	node[pos=1.05]{$\bullet$}
	node[pos=-0.01]{$\bullet$};
	
	\draw [<-,thick] (1.2,2)-- (5.9,2)node[pos=0.4 , left, above ]{$\widetilde{h}^*(i)$};
	
	\draw [->,thick] (6,1.9)node[above ]{$\beta \in F_{s-1}$}-- ( 5,-2)node[below ]{$\gamma \in h^{*+1}(\widehat{D}(\sigma_{s}; (\underline{X}/\underline{A},\underline{A})))$}
	node[pos=0.4, left]{$\delta$}
	node[pos=1.05]{$\bullet$}
	node[pos=-0.024]{$\bullet$};
	
	\end{tikzpicture}

	The diagram defining the differential fits into a larger diagram.

	\bigskip
	
	\begin{tikzpicture}
	
	\draw [<-, thick ] (1,2)node[above ]{$F_t$} --( 0,-2)node[below left] {$\alpha \in \widetilde{h}^*(\widehat{D}(\sigma_t; (\underline{X}/\underline{A},\underline{A})))$}
	node[pos=0.4 ,left]{$\widetilde{h}^*(j)$}
	node[pos=1.05]{$\bullet$}
	node[pos=-0.01]{$\bullet$};
	\draw [<-, thick, dashed] (-3,2)node[above]{$F_{t-1}$} -- (.8,2)
	node[pos=0.5, left, above]{$\widetilde{h}^*(i)$}
	node[pos=-.025]{$\bullet$};
	\draw [<-,thick] (1.2,2)-- (6,2)node[pos=0.4 , left, above ]{$\widetilde{h}^*(i)$};
	\draw [<-,thick,dashed] (6.2,2) -- (10, 2)node[above ]{$F_{s}$}
	node[pos=1.025]{$\bullet$};
	\draw [->,thick] (6,1.9)node[above ]{$\beta \in F_{s-1}$}-- ( 5,-2)node[below ]{$h^{*+1}(\widehat{D}(\sigma_{s}; (\underline{X}/\underline{A},\underline{A})))$}
	node[pos=0.4, left]{$\delta$}
	node[pos=1.05]{$\bullet$}
	node[pos=-0.024]{$\bullet$};
	\draw[->,thick] (6, 2.7) -- (6, 5)node[pos=0.4, left]{$\theta$} node[above]{$\widetilde{h}^*(\widehat{Z}(\partial \Delta; \xa))$}
	node[pos=1.024]{$\bullet$};
	
	\draw[<-,thick] (6.2, 5)-- (9.9, 5)node[above]{$\widetilde{h}^*(\widehat{Z}(\Delta; \xa))$}
	node[pos=1.025]{$\bullet$};
	\draw[<-,thick] (10,4.8)--(10,2.5);
	
	\draw[->, thick] (5.8,5) .. controls   +(left:2cm) ..  (4.8,-2.03)
	node[pos=.9, left ]{$\widetilde{\delta}$};
	\end{tikzpicture}
	
	\bigskip
	
	Where the right square is induced by the pushout diagram in Lemma \ref{lem:associated graded} and $\widetilde{\delta}$ is the coboundary map associated to the cofibration
	$$ \widehat{Z}(\partial \Delta;(\underline{X},\underline{A})) \to \widehat{Z}( \Delta;(\underline{X},\underline{A})) \to \widehat{D}(\sigma_{s}; (\underline{X}/\underline{A},\underline{A})).$$

	\

	By induction $\beta$ is a sum of terms of the form $y_1 \otimes \cdots \otimes y_m$.  We write  $(y_1 \otimes \cdots \otimes y_m)_{\ell}$ for the terms that appear in $\beta$
	
	$$\beta = \underset{\ell}{\Sigma} (y_1 \otimes \cdots \otimes y_m)_{\ell}.$$
	The differential of $\alpha$ is given by   $$d(\alpha) = \widetilde{\delta}(\theta(\underset{\ell}{\Sigma} (y_1 \otimes \cdots \otimes y_m)_{\ell}))$$
	where $\widetilde{\delta}$ is given by the formula in part (3) of Theorem \ref{diff}.
	
	\end{proof}

\

The following Lemma \ref{lem: strongly isomorphic homology and homology of smash products}, motivates the definition  of {\it strongly isomorphic $h^*-$homology}, which was also   defined in \cite{cartan}.

\begin{defin}\label{defin:maps between infinite symmetric products}

The pairs $(\underline{Y},\underline{B})$ and $(\underline{X},\underline{A})$
are said to have {\bf strongly isomorphic $h^*-$cohomology} provided

\begin{enumerate}
\item there are $h^*-$isomorphisms 
$$\beta_j: h^*(Y_j) \to h^*(X_j),$$  $$\alpha_j: h^*(B_j) \to h^*(A_j),$$ 
and 
$$\gamma_j:h^*(Y_j/B_j)\to h^*(X_j/A_j)$$

\

\
		\item there is an commutative diagram of exact sequences

		\[
		\begin{CD}
		@>{}>>  \bar{H}_i(B_j ) @>{{\lambda_j}_*}>>  \bar{H}_i(Y_j ) @>{}>> H_*(Y_j/B_j)@>{\delta}>>   \\
		&&        @VV{\alpha_j}V     @VV{\beta_j}V    @VV{\gamma_j}V \\
		@>{}>>  \bar{H}_i(A_j) @>{{\iota_j}*}>>  \bar{H}_i(X_j) @>{}>> H_*(X_j/A_j)) @>{\delta}>>  
		\end{CD}
		\]. 
		
		where  $\lambda_j: B_j \subset Y_j$, and $\iota_j: A_j \subset X_j$ are the natural inclusions.
		
		\item The maps of triples 
		$$(\alpha_j,\beta_j,\gamma_j): (H_*(Y_j), H_*(B_j), H_*(Y_j/B_j) ) \to (H_*(X_j), H_*(A_j), H_*(X_j/A_j)$$
		which satisfy conditions $1-2$ are said to {\bf induce a strong homology isomorphism}.

\end{enumerate}

\end{defin}

\
\begin{comment}
The feature of the pairs $(\underline{Y},\underline{B})$ and $(\underline{X},\underline{A})$
having strongly isomorphic $h^*-$cohomology groups  implies that the spaces $\widehat{D}(\sigma;(\underline{Y},\underline{B}))$
$\widehat{D}(\sigma;(\underline{X},\underline{A}))$ have isomorphic $h^*-$cohomology groups.
This is stated in the next lemma.

\begin{lem} \label{lem: strongly isomorphic homology and homology of smash products}
Given pointed, path-connected pairs of finite CW-complexes
 $(\underline{X},\underline{A},\underline{*})$, and  $(\underline{Y},\underline{B},\underline{*})$
 with strongly isomorphic {\bf free} $h^*-$cohomology groups, and $\sigma \in K$ any face of the simplicial complex $K$.
 Then the $h^*-$cohomology groups of $$\widehat{D}(\sigma;(\underline{X},\underline{A}))$$
 are isomorphic to the $h^*-$cohomology groups of $$\widehat{D}(\sigma;(\underline{Y},\underline{B})).$$

\

Hence there are isomorphisms  $$h^*(\zk) \to  h^*(Z(K;(\underline{Y},\underline{B})))$$

and 
$$ h^*(\zkh) \to h^*(\widehat{Z}(K;(\underline{Y},\underline{B})))$$

\end{lem}

\end{comment}

  We  will sometimes say that pairs of $h^*$ modules with maps satisfying conditions $1-3$ are strongly $h^*$ isomorphic without reference to any spaces.   The next corollary is an immediate consequence of \ref{diff}.

 \begin{cor}\label{cor:specSeqstrong isomorphism} Assume $(X_i,A_i)$ are CW pairs for all $i$ which satisfy the freeness conditions of \ref{diff}.  Then the spectral sequence  $E_r(\zk) \Rightarrow \widetilde{h}^*(\zk) $ depends only on the strong  $h^*$ cohomology isomorphism type of the pairs, $(X_i,A_i)$.  
 \end{cor}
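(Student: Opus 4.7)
The plan is to trace through the description of the spectral sequence given by Theorem \ref{thm:spectral sequence} together with the identification of the differentials in Theorem \ref{diff} and observe that every ingredient in sight is determined algebraically by the strong freeness data of the pairs. Since the corollary is advertised as ``an immediate consequence of \ref{diff}'', the aim is simply to name the isomorphism explicitly and check naturality.

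First I would fix a strong $h^*$-cohomology isomorphism between the families $(\underline{X},\underline{A})$ and $(\underline{Y},\underline{B})$ in the sense of Definition \ref{defin:maps between infinite symmetric products}. Because $(X_i,A_i)$ satisfies the strong freeness condition of Definition \ref{def:bas}, one has a preferred decomposition $h^*(A_i)=E_i\oplus B_i$, $h^*(X_i)=B_i\oplus C_i$, $\widetilde{h}^*(X_i/A_i)=C_i\oplus W_i$. The compatible isomorphisms $\alpha_j,\beta_j,\gamma_j$ form a ladder of long exact sequences, and I would transport this decomposition along them to obtain strong freeness data $E_j^Y,B_j^Y,C_j^Y,W_j^Y$ on the $(Y_j,B_j)$ side. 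A five-lemma style check shows that the coboundary $\delta_j^Y\colon E_j^Y\to W_j^Y$ corresponds under $(\alpha_j,\gamma_j)$ to $\delta_j^X\colon E_j\to W_j$; in particular the coboundaries that govern the differentials of Theorem \ref{diff} are intertwined by the isomorphism.

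Next, by the freeness assumption and the K\"unneth theorem, the $E_1$ pages of both spectral sequences split, over the simplices $\sigma\in K$, as direct sums of tensor products of the $h^*$-modules $C_i,W_i,E_i,B_i$ (with indices determined by $\sigma$ and $\sigma^c$). Taking the termwise tensor product of $\alpha_j,\beta_j,\gamma_j$ gives an $h^*$-module isomorphism of $E_1$ pages which respects the filtration index (the weight of $\sigma$). Theorem \ref{diff} expresses each $d_r$ entirely in terms of applications of the coboundaries $\delta_i$ to monomial summands via the graded Leibniz rule. Since the induced map on $E_1$ intertwines the $\delta_i$'s on each factor, it commutes with every $d_r$, and by induction on $r$ it descends to an isomorphism of spectral sequences converging to $\widetilde{h}^*(\zk)$ and $\widetilde{h}^*(Z(K;(\underline{Y},\underline{B})))$ respectively. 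Naturality with respect to morphisms of pairs follows from the naturality clause of Theorem \ref{thm:spectral sequence}.

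The main obstacle I anticipate is the passage from an abstract strong $h^*$-isomorphism of Definition \ref{defin:maps between infinite symmetric products} to an actual compatibility of the chosen strong freeness splittings on the two sides. Strong freeness is defined in terms of specific direct-sum decompositions rather than up to isomorphism, so one must be careful to show that the transported splittings on $(Y_j,B_j)$ really do identify with the kernel, image and cokernel data of the coboundaries $\delta_j^Y$. This is where the commutativity of the ladder in Definition \ref{defin:maps between infinite symmetric products}, together with the five-lemma applied componentwise, is essential; once that is in place, the rest of the proof is a purely bookkeeping application of Theorem \ref{diff}.
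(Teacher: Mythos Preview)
Your proposal is correct and matches the paper's approach: the paper simply asserts that the corollary ``is an immediate consequence of \ref{diff}'' and gives no further argument, and what you have written is exactly the unpacking of that implication. The concern you flag about transporting the strong freeness splittings along the ladder of Definition~\ref{defin:maps between infinite symmetric products} is legitimate but resolves routinely, since the commutative ladder of long exact sequences forces the isomorphisms to respect kernel, image, and cokernel of $\iota$ and $\delta$ and hence to carry one choice of $E_i,B_i,C_i,W_i$ to another.
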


\

Specifically the filtration, differentials and extensions depend only on $K$ and the  $h^*$ cohomology isomorphism type of the pairs, $(X_i,A_i)$.

\

A straightforward, recursive application of the splitting for the right smash,  Lemma \ref{lem: cofibrations.two},  shows that there is a homotopy equivalence

\begin{equation}\label{splittingofSS}\Sigma ( \underset{I \subset [m]}{\bigvee} [ \underset{\sigma \in K_I}{\bigvee}  ( \underline{X}/\underline{A})^{\sigma}
   \wedge (\widehat{A}^{I-|\sigma|})] )\to
\Sigma( \underset{\sigma \in K}{\bigvee}  ( \underline{X}/\underline{A})^{\sigma}
   \rtimes ( A^{\sigma^c})).
             \end{equation}

This implies  a splitting of  $E_1(\zk)$ into a  sum over $I$ of $E_1(\widehat{Z}(K_I; \xa))$.  Hence Theorem \ref{thm:decompositions.for.general.moment.angle.complexes} appears at the level of the $E_1$ pages of the spectral sequences in equation \eqref{thm:h*spectral sequence}.   The differentials respect the splitting.  The next corollary records this.

\begin{cor}\label{cor:SSsplitting} Assume $(X_i,A_i)$ are CW pairs  which satisfy the freeness conditions of \ref{diff}. Then
$$E_r(Z(K; \xa)) = \underset{I \subset [m]}{\bigoplus}E_r(\widehat{Z}(K_I; \xa_I)).$$

\end{cor}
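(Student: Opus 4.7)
The plan is to prove the splitting first at $E_1$ and then show that the differentials preserve the decomposition, so the splitting passes inductively to every page.

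At $E_1$ the splitting is immediate from the wedge decomposition \eqref{splittingofSS}, which is obtained by iterating the stable splitting $\Sigma(X \rtimes Y) \simeq \Sigma(X \wedge Y) \vee \Sigma X$ of Lemma~\ref{lem: cofibrations.two}(1) on each factor $\widetilde{h}^*((\underline{X}/\underline{A})^{\sigma} \rtimes \underline{A}^{\sigma^{c}})$. Reduced cohomology turns a wedge into a direct sum, yielding
\[
E_{1}(\zk) \;\cong\; \bigoplus_{I \subset [m]} E_{1}(\widehat{Z}(K_{I};\xa_{I})).
\]
Using the strong freeness of Definition~\ref{def:bas}, I would make this decomposition explicit: a tensor monomial $y = y_{1}\otimes\cdots\otimes y_{m}$ attached to a simplex $\sigma\in K$ has $y_{i}\in C_{i}\cup W_{i}$ for $i\in\sigma$ and $y_{i}\in B_{i}\cup E_{i}$ for $i\notin\sigma$, so the total support $I(y) = \{i : y_{i}\neq 1\}$ is well-defined, contains $\sigma$, and the collection of monomials with fixed $I(y)=I$ reconstructs $E_{1}(\widehat{Z}(K_{I};\xa_{I}))$ term by term.

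The next step is to show the differentials respect this $I$-grading. By Theorem~\ref{diff}, $d_{r}$ is the component of the global coboundary $\delta$ of Definition~\ref{coboundary} that lands on simplices of weight $s+r$, and $\delta$ is the Leibniz-rule extension of the pointwise coboundaries $\delta_{i}\colon h^{*}(A_{i})\to\widetilde{h}^{*+1}(X_{i}/A_{i})$. Definition~\ref{def:bas} forces $\delta_{i}$ to vanish on $B_{i}$ and to restrict to an isomorphism $E_{i}\xrightarrow{\cong} W_{i}$. Each Leibniz term in $\delta(y)$ therefore replaces a single factor $y_{i}\in E_{i}$ by a nonzero $\delta_{i}(y_{i})\in W_{i}$, which keeps $y_{i}\neq 1$ and so leaves $I(y)$ unchanged. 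Hence $\delta$, and therefore every $d_{r}$, preserves the $I$-grading. Moreover, restricted to the $I$-summand, $\delta$ agrees monomial by monomial with the coboundary driving the spectral sequence of $\widehat{Z}(K_{I};\xa_{I})$, because the factors $y_{i}=1$ with $i\notin I$ contribute nothing ($\delta_{i}(1)=0$). An induction on $r$ then propagates the splitting from $E_{1}$ to every page.

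The main technical obstacle I anticipate is bigrading bookkeeping: a simplex lying in both $K$ and $K_{I}$ generally has different weights in the two left-lexicographic orderings, so a $d_{r}$ in the $I$-summand of $E_{r}(\zk)$ need not correspond to a $d_{r}$ with the same $r$ on the spectral sequence of $\widehat{Z}(K_{I};\xa_{I})$. The stated equality is therefore to be read as an isomorphism of bigraded $h^{*}$-modules up to the natural reindexing of filtration degree induced by the order-preserving inclusion $K_{I}\hookrightarrow K$ recorded in Lemma~\ref{lem: filtration of a simplicial complex}; with that convention in place, the splitting holds exactly on each page.
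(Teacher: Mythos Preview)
Your proposal is correct and follows essentially the same route as the paper: the $E_{1}$-splitting comes from distributing the units $1\in B_{i}$ (your $I(y)=\{i:y_{i}\neq 1\}$ is exactly the paper's bookkeeping device $1^{[m]-I}$), and the differentials preserve the splitting because, by Theorem~\ref{diff}, they act only through $\delta_{i}\colon E_{i}\to W_{i}$ and annihilate $1$. Your explicit Leibniz-rule check that $I(y)$ is preserved is more detailed than the paper's terse ``summing over all simplices,'' and your remark on filtration reindexing under $K_{I}\hookrightarrow K$ is a point the paper leaves implicit.
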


The following example illustrates the main ideas in the proof.  The notation is as in (\ref{def:bas}).

\

\begin{equation}\label{ideaProof}\end{equation}
\begin{itemize}

\item  The splitting at   $E_1$ is a consequence of the presence of the classes $1\in B_i$ defined in \ref{def:bas} which appears in  $E_r(Z(K; \xa)) $ but not in $E_r(\widehat{Z}(K_I; \xa_I))$.

 \item    The differentials only involve  $E_i$ and $W_i$, hence the spectral sequences are modules over $B_i$ and $C_i$.
\end{itemize}

\begin{exm}   Let  $h^*=$ ordinary cohomology,  $K$ the simplicial complex of two disjoint points, $\{1,2\}$,  and $X_i = D^2 \bigvee S^3$, $A_i = S^1 \subset D^2 \subset X_i$ for $i=1,2$.  Since $i$ is determined by the coordinate in the subsequent tensor products, we omit it from the notation.

The modules  of (\ref{def:bas}) are as follows:
$$E =\Z\{e_1\}\in H^1(S^1),  \quad B = \Z\{1\} \in H^0(D^2 \bigvee S^3), $$$$ C=\Z\{c_3\} \in H^3(D^2 \bigvee S^3), \quad W=\Z\{w_2\} \in H^2((D^2/S^1) \bigvee S^3).$$

For this case, the decomposition given by equation \eqref{thm:h*spectral sequence} has the form

$$E_1(Z(K; \xa))  =
\begin{array}{cccccccccc}
H^*(A \times A) & \bigoplus & H^*(X/A \rtimes A) & \bigoplus & H^*(A \ltimes X/A)\\

\Vert && \Vert && \Vert\\
(e_1 \oplus 1)\otimes  ( 1 \oplus e_1) & \bigoplus &(c_3 \oplus w_2) \otimes (e_1 \oplus 1) & \bigoplus & (e_1 \oplus 1) \otimes (c_3 \oplus w_2)
 \end{array}
 $$
which equals

$$\begin{array}{llll}
(1 \otimes 1) \oplus
(e_1 \otimes 1) \oplus (1 \otimes e_1) \oplus(e_1 \oplus e_1) \\
\hspace{26 \jot} \bigoplus \\
(c_3 \otimes e_1) \oplus (c_3 \otimes 1) \oplus (w_2 \otimes e_1) \oplus (w_2 \otimes 1) \\
\hspace {26 \jot} \bigoplus\\
(e_1 \otimes c_3) \oplus (1 \otimes c_3) \oplus (e_1 \otimes w_2) \oplus (1 \otimes w_2)\\

\end{array}$$

Now arrange the summands according to the location of the unit

$$ \begin{array}{lllll}
(e_1 \otimes e_1) \bigoplus (c_3\otimes e_1) \oplus (w_2 \otimes e_1) \bigoplus (e_1 \otimes c_3) \oplus (e_1 \otimes w_2)\\
(e_1 \otimes 1) \bigoplus (c_3 \otimes 1) \oplus (w_2 \otimes 1)\\
(1 \otimes e_1) \bigoplus (1 \otimes c_3) \oplus (1 \otimes w_2)\\
(1 \otimes 1)
\end{array}$$

\

The first line is $E_1(\widehat{Z}(K_{\{1,2\}}; \xa))$, the second line is $E_1(\widehat{Z}(K_{\{1\}}; \xa))$ and the third line is $E_1(\widehat{Z}(K_{\{2\}}; \xa))$.  The last line is the unit in $H^0(Z(K; \xa) $ which appears as $E_1(\widehat{Z}(K_{\emptyset}; \xa))$.  This decomposition is an example of the first observation of  (\ref{ideaProof}).

\

  There is a differential in the first line from $e_1 \otimes e_1$ to $w_2 \otimes e_1$, a differential in the second line from $e_1 \otimes 1$ to $w_2 \otimes 1$ and a similar differential from $1 \otimes e_1$  to $ 1 \otimes w_2$.  The differentials are zero on $1$ and $c_3$.    This illustrates the second point of \ref{ideaProof}.

\end{exm}

  Using the units we have  splitting of the spectral sequence.

  \
  \begin{equation}\label{algebraicSplitting}
  E_r(Z(K; \xa)) = \underset{I\subset [m]}{\bigoplus} E_r(\widehat{Z}(K_I; \xa_I)) \bigotimes 1^{[m]-I}
  \end{equation}
where $1^{[m]-I}$ denotes a factor of $1$ in each coordinate in the complement of $I$.

\begin{proof} [Proof of Corollary  \ref{cor:SSsplitting}.]   First some equalities that were used in \ref{ideaProof} and are relevant to proving  \ref{cor:SSsplitting} that follow from the K\"{u}nneth formula  are listed.  We assume $X$ and $Y$  have free $h^*$ cohomology.

\begin{enumerate}

\item $$\widetilde{h}^*(X \times Y) =(\widetilde{h}^*(X)\otimes \widetilde{h}^*(Y)) \oplus( \widetilde{h}^*(X) \otimes 1)\oplus (1 \otimes \widetilde{h}^*(Y)).$$

\

This is the algebraic version of the homotopy splitting
$$ \Sigma (X \times Y) = \Sigma (X \wedge Y) \vee \Sigma (X) \vee \Sigma (Y).$$

\

\item  $$\widetilde{h}^*(X \rtimes Y) =[ \widetilde{h}^*(X)\otimes \widetilde{h}^*(Y)]  \oplus[ \widetilde{h}^*(X) \otimes 1] .$$

This is the algebraic version of Lemma \ref{lem: cofibrations.two}.

\

\item More generally,  with $\sigma = \{1,2,\cdots, n\}$.

\

$h^*( (\underline{X}/\underline{A})^{\sigma} \rtimes \underline{A}^{\sigma^c})$
$$= \widetilde{h}^*( (X_1/A_1) \wedge \cdots \wedge (X_n/A_n) \bigwedge (A_{n+1} \times \cdots \times A_m)) \bigoplus \widetilde{h}^*((X_1/A_1) \wedge \cdots \wedge (X_n/A_n)) \otimes 1 \cdots \otimes 1$$
$$=\underset{ \{i_1,\cdots i_p \} \subset \{1, \cdots, n\}}{\bigoplus} \widetilde{h}^*( (X_1/A_1)) \otimes \cdots \otimes  \widetilde{h}^*(X_n/A_n)  \otimes \widetilde{h}^*(A_{i_1}) \otimes \cdots \otimes \widetilde{h}^*(A_{i_p})) \otimes 1 \otimes \cdots \otimes 1$$
where the units appear in the factors with coordinates in $[m]$ not in the set $\{i_1,\cdots, i_p \}$.
\end{enumerate}

\

Summing over all simplices gives the isomorphism in Corollary \ref{cor:SSsplitting}.
\end{proof}
 \

\section{\bf The Cohomology of the polyhedral product. }\label{revisited}

 The freeness conditions of Section \ref{sec:spectral sequence} are assumed for $\xa$ throughout this section the goal of which  is to compute $h^*(\zkh)$ in terms of the strong $h^*$-cohomology isomorphism type of the pairs $(X_i,A_i)$ and the cohomology of sub-complexes of $K$.
  A consequence is a formula for the reduced Poincare series for $\widetilde{H}^*(\zk)$.

 \

 $K$ denotes a simplicial complex with $m$ vertices. $E_i, B_i$, $C_i$ and $W_i$ are as in Definition  \ref{def:bas}.

\

Write $E_1$ for $E_1(\widehat{Z}(K; \xa).$  

\

Recall the following  from Section
\ref{sec:spectral sequence}.

\

 \begin{enumerate}
\item
    $E_1$ is a sum of $h^*-$modules
$$ \widetilde{h}^*( ( \underline{X}/\underline{A})^{\sigma}
   \wedge ( \widehat{A}^{\sigma^c}))$$
   which is a sum of tensor products of $E_i, B_i, C_i, W_i$.
   
   \
   
 \noindent In particular a typical summand in $E_1$ may be written in the form
 
 \

   \begin{equation}
   \label{JST}
    E^J \otimes W^L \otimes C^S \otimes B^T, \end{equation} with $ J \cup L \cup S \cup T = [m]$ and   $J, L, S, T$ mutually disjoint.

 \

   \item The indexing set $L \cup S$ is a simplex in $K$ (see Definition  \ref{coboundary}). 

\

   \item  By Theorem \ref{diff} the differentials in the spectral sequence are induced by the coboundary maps, $\delta_i: \widetilde{h}^*(A_i) \to \widetilde{h}^*(X_i/A_i)$.  with $\delta_i$ mapping   $E_i$ to $W_i$.

    \end{enumerate}

   \

We next show that $E_r$ is a sum of simpler spectral sequences.  To this end fix $S$ and $T$.  The next definition, which is simply a  reparametrization of $C^S \otimes B^T$ is for convenient bookkeeping.
\begin{defin}
 Let $I \subset [m], \quad I=(i_1,\cdots , i_p)$.

     Let $\sigma \subset I$ be a simplex in $K$.  Define

  $Y^{I,\sigma} =  Y_1 \otimes \cdots \otimes Y_p \mbox{where} $

 $ Y_t=  \begin{cases}

  C_{i_t} & \mbox{ if }  i_t \in \sigma \\

  B_{i_t} & \mbox{ if } i_t \notin \sigma
  \end{cases}$

\end{defin}

 In the notation of (\ref{JST})  $\sigma=S$ and  $I = S \cup T$.
 
 \

 By definition, $Y^{I,\sigma}$ is fixed because $S$ and $T$ are fixed .  Now  consider the sum
  $$\Big[ \underset{\substack{J\cup L \cup I=[m] \\ L \cup \sigma \in K}}{\bigoplus}E^J\otimes W^L \Big]\otimes Y^{I,\sigma}$$
    which is a sub-sum of the $E_1$ page of the spectral sequence.
    Since $J, L$ and $I$ are mutually disjoint, and $L \cup \sigma \in K$ it follows that  $L$ is a simplex of $K$ belonging to the link of $\sigma$ in the complement of $I$, which is now defined.

 \begin{defin}\label{def:N(Isigma)} If $I \subset [m]$ and  $\sigma \in K$, $\sigma \subset I$ then the link of $\sigma$ in the complement of $I$,
 $N(I,\sigma),$ is the simplicial complex, $N(I,\sigma)$ on vertex set $[m]-I$ such that

  $$\overline{\sigma} \in N(I,\sigma)  \Leftrightarrow \overline{\sigma} \cup \sigma \in K.$$
  \end{defin}
  Note that
  \begin{enumerate}

  \item $N(I,\sigma)$ is indeed a simplicial complex since $\sigma^{\prime} \subset \overline{\sigma} $ implies $\sigma^{\prime} \cup \sigma \in K$ which implies $\sigma^{\prime} \in N(I,\sigma)$.
  
  \
  
\item   If $N(I,\sigma) =\emptyset$   then $\sigma \cup \{v\} \notin K$ for all $v \in [m]-I$.  This implies \newline $N(I,\sigma) =\emptyset \Leftrightarrow \sigma$ is a maximal simplex in $K$.

\

\item $N(I, \emptyset)= K_{[m]-I}$.

\

\item $N(|\sigma|,\sigma) = lk_{\sigma}(K)$, the link of $\sigma$ in $K$.

 \end{enumerate}

\

Since all differentials take place between the terms $E^J$ and $W^L$ in $E_r$ it follows that  for  fixed $I,\sigma$ 
$$\Big[ \underset{\substack{J\cup L \cup I=[m] \\ L \cup \sigma \in K}}{\bigoplus}E^J\otimes W^L \Big]\otimes Y^{I,\sigma}$$
is a sub spectral sequence of $E_r$ with all differentials taking place within the brackets. In particular 

\

$$ \Big[ \underset{\substack{J\cup L \cup I=[m] \\ L \cup \sigma \in K}}{\bigoplus}E^J\otimes W^L \Big]$$is the $E_1-$page of a spectral sequence.  

\

 This spectral sequence is next identified as a spectral sequence
$$\Big[ E_r(\widehat{Z}(N(I,\sigma); (\underline{CV},\underline{V})))\Big]$$ for some collection of CW complexes, $\{V_i\}$.

\

Recall that $h^*(A_i) = E_i \oplus B_i$ for free $h^*-$modules $E_i, B_i$.  Let 
$$\{e^{\ell}_i\}$$ be a set of generators for $E_i$.

\

\begin{defin}
Set $$V_i = \bigvee S^{|e^{\ell}_i|}$$
\end{defin}
\noindent where $|e^{\ell}_i|$ is the dimensions of  generator $e^{\ell}_i$.

\

The $V_i$ are constructed so that $(0, \widetilde{h}^*(V_i))$ is strongly $h^*$ cohomology isomorphic to $(0,E_i)$.  {\it Strongly $h^*$ cohomology isomorphic} is defined in definition \ref{defin:maps between infinite symmetric products} and the paragraph before corollary \ref{cor:specSeqstrong isomorphism}.  By construction the spectral sequence $$ E_r(\widehat{Z}((N(I,\sigma); (\underline{CV},\underline{V}))$$is isomorphic to the spectral sequence 
$$\Big[ \underset{\substack{J\cup L \cup I=[m] \\ L \cup \sigma \in K}}{\bigoplus}E^J\otimes W^L\Big].$$  The spectral sequence converges to $h^*(\widehat{Z}(N(I,\sigma));(\underline{CV},\underline{V}))$. 
 By the  wedge lemma,  \ref{thm:null.A} 
$$ h^*(\widehat{Z}(N(I,\sigma); (\underline{CV},\underline{V}) = h^*(\Sigma |N(I,\sigma|) \otimes E^{[m]-I}.$$

\

\

Combining these results gives the following calculation of the cohomology of the polyhedral smash product functor, which by the algebraic splitting theorem, \ref{cor:SSsplitting}, gives the cohomology of the polyhedral product functor for CW pairs satisfying the freeness condition.  The difference between the spectral sequence converging to $h^*(Z(K; \xa)$ and the one converging to $\widetilde{h}^*(\widehat{Z}(K; \xa)$ is describe in  the  proof of  Corollary \ref{cor:SSsplitting}.

  \

The extensions in the spectral sequence converging to 
$$h^*(\zkh)$$ appear as extension is the spectral sequence converging to $h^*(\Sigma |N(I,\sigma|) \otimes E^{[m]-I}$.  
So we may write   $ E^{[m]-I}\otimes \widetilde{h}^*(\Sigma |N(I,\sigma|)$ below rather than the associated graded group.

\noindent \begin{thm}\label{Ps}

\begin{enumerate}

\

\item
            $$\widetilde{h}^*(\zkh)) = \underset{I\subset [m], \sigma \in K}{\bigoplus} E^{[m]-I} \otimes \widetilde{h}^*(\Sigma|N(I,\sigma)|) \otimes Y^{I,\sigma}.$$as $h^*-$modules
    with $  \widetilde{h}^*(\Sigma \emptyset)  =1$.   The factors, $B_i$ of $Y^{I,\sigma}$ are   subsets of the \textbf{reduced  cohomology} of $A_i$.  i.e. $1\in B_i$ as defined in Definition \ref{def:bas} does not appear.

    \

\item  Similarly $$ h^*(\zk)) = \underset{I \subset [m], \sigma \in K}{\bigoplus} E^{[m]-I} \otimes \widetilde{h}^*(\Sigma|N(I,\sigma)|) \otimes Y^{I,\sigma}. $$as $h^*-$modules
 with $  \widetilde{h}^*(\Sigma \emptyset)  =1$, but now  the factors, $B_i$ of $Y^{I,\sigma}$ are  subsets of the \textbf{un-reduced cohomology} of $A_i$.  i.e. $ 1$ as defined in Definition \ref{def:bas} may appear in a coordinate of $Y^{I,\sigma}$.

\end{enumerate}
     \end{thm}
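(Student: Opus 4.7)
The strategy is to decompose the $E_1$ page of the spectral sequence of Theorem \ref{thm:spectral sequence} according to the fourfold freeness splitting, and then to identify each $d_r$-stable sub-summand as the spectral sequence of an auxiliary polyhedral smash product whose cohomology has already been computed via the wedge lemma.

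First I would parameterise a monomial summand of $E_1(\widehat{Z}(K;\xa))$ as $E^J \otimes W^L \otimes C^S \otimes B^T$ with $J,L,S,T$ pairwise disjoint, $J \cup L \cup S \cup T = [m]$, and $L \cup S$ a simplex of $K$ (this last condition is the support condition from Definition \ref{coboundary}). By Theorem \ref{diff} the differential is generated by the coboundaries $\delta_i : E_i \to W_i$ and the Leibniz rule, so it moves tensorands only between the $E$ and $W$ factors while fixing the $C$ and $B$ factors. Consequently, for each fixed pair $(I,\sigma)$ with $I := S \cup T$ and $\sigma := S$, the subcollection
\[
\Bigl[\,\bigoplus_{\substack{J \cup L = [m] \setminus I \\ L \cup \sigma \in K}} E^J \otimes W^L\,\Bigr] \otimes Y^{I,\sigma}
\]
is a $d_r$-stable sub-spectral-sequence of $E_r(\widehat{Z}(K;\xa))$, and the full spectral sequence is a direct sum of these pieces over all admissible $(I,\sigma)$.

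To identify the bracketed factor, I would introduce the CW pairs $(CV_i,V_i)$ with $V_i = \bigvee_\ell S^{|e_i^\ell|}$ as in the definition preceding the theorem, so that $\widetilde{h}^*(V_i) \cong E_i$ and, via the suspension isomorphism $\widetilde{h}^*(CV_i/V_i) = \widetilde{h}^*(\Sigma V_i) \cong W_i$, with coboundary $\delta_i$ an isomorphism. Thus $(CV_i,V_i)$ realises the restricted strong freeness datum in which the $B$- and $C$-summands are trivial. The condition ``$L \cup \sigma \in K$ with $L \subseteq [m] \setminus I$'' is exactly ``$L \in N(I,\sigma)$'' from Definition \ref{def:N(Isigma)}, so by Corollary \ref{cor:specSeqstrong isomorphism} the bracketed sub-spectral-sequence is isomorphic to $E_r(\widehat{Z}(N(I,\sigma);(\underline{CV},\underline{V})))$. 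Because each $CV_i$ is contractible, the wedge lemma (Theorem \ref{thm:null.A}) gives a homotopy equivalence $\widehat{Z}(N(I,\sigma);(\underline{CV},\underline{V})) \simeq |N(I,\sigma)| \ast \widehat{V}^{[m] \setminus I}$, and then the K\"unneth theorem (available by the freeness of $\widetilde{h}^*(V_i) = E_i$) yields
\[
\widetilde{h}^*\bigl(\widehat{Z}(N(I,\sigma);(\underline{CV},\underline{V}))\bigr) \;\cong\; \widetilde{h}^*(\Sigma|N(I,\sigma)|) \otimes E^{[m] \setminus I}.
\]
Crucially, this computes the actual cohomology rather than merely its associated graded, so it simultaneously resolves every possible extension inside the bracketed block. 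Tensoring with the spectator factor $Y^{I,\sigma}$ and summing over all $(I,\sigma)$ with $\sigma \in K$ and $\sigma \subseteq I \subseteq [m]$ gives part (1).

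Part (2) would then follow immediately from part (1) together with the algebraic splitting of Corollary \ref{cor:SSsplitting}, which writes $E_r(Z(K;\xa)) = \bigoplus_{I \subseteq [m]} E_r(\widehat{Z}(K_I;\xa_I)) \otimes 1^{[m] \setminus I}$; the extra unit factors in the complementary coordinates are exactly what turns the reduced $B_i$ of part (1) into the unreduced $B_i$ of part (2). The main obstacle I anticipate is ensuring that the identification with $(CV_i,V_i)$ really transports the higher differentials and not just the $E_1$ page, so that the wedge-lemma computation genuinely resolves the extensions in the original sequence; this is the role played by Corollary \ref{cor:specSeqstrong isomorphism}, and a careful check that the strong $h^*$-isomorphism type (and not merely the $E_1$ term as a module) determines the whole spectral sequence is the technical linchpin of the argument.
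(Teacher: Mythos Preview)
Your proposal is correct and follows essentially the same route as the paper: decompose $E_1$ into the blocks $\bigl[\bigoplus E^J\otimes W^L\bigr]\otimes Y^{I,\sigma}$, identify each bracket with $E_r(\widehat{Z}(N(I,\sigma);(\underline{CV},\underline{V})))$ via the strong $h^*$-isomorphism of Corollary~\ref{cor:specSeqstrong isomorphism}, compute the latter by the wedge lemma, and deduce part~(2) from part~(1) using Corollary~\ref{cor:SSsplitting}. The technical linchpin you flag---that the strong $h^*$-isomorphism type transports the entire spectral sequence including extensions, not just the $E_1$ page---is exactly the point the paper isolates in Corollary~\ref{cor:specSeqstrong isomorphism} and in the sentence immediately preceding the theorem statement.
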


A convenient reformulation of Theorem \ref{Ps} is given in the next corollary.

\begin{cor}\label{srstar} 
	$$h^*=\underset{}{\underset{  \underset{I \cup J =[m] } {I \cap J =\emptyset}  }{\bigoplus} }\widetilde{h}^*(\Sigma | N_J|) \otimes E^J \otimes h^*(X^I)/(R)$$ where $I=\{i_1, \cdots, i_t\}$, $X^I=X_{i_1} \times \cdots \times X_{i_t}$,
	and   $(R) $ is the ideal  generated by $ C^{S} \subset H^*(X^I)$ where  $S$ is not simplex of $K$.
\end{cor}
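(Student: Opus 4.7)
The plan is to deduce this corollary directly from Theorem \ref{Ps}(2); the only substantive step is the identification of $h^*(X^I)/(R)$ with the direct sum $\bigoplus_{\sigma \in K,\, \sigma \subseteq I} Y^{I,\sigma}$, after which the corollary is a reorganisation of indices in the formula of Theorem \ref{Ps}.

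First I would decompose $h^*(X^I)$ into pieces indexed by subsets of $I$. Under the strong freeness hypothesis of Definition~\ref{def:bas}, each $h^*(X_i) = B_i \oplus C_i$ as a free $h^*$-module, so K\"unneth yields
$$h^*(X^I) \;=\; \bigotimes_{i \in I}(B_i \oplus C_i) \;=\; \bigoplus_{\sigma \subseteq I} Y^{I,\sigma},$$
with $Y^{I,\sigma} = C^\sigma \otimes B^{I\setminus\sigma}$ and the sum ranging over all subsets $\sigma \subseteq I$, not just simplices of $K$.

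Next I would identify the ideal $(R)$. In the split-surjective setting underlying the corollary, $C_i = \ker(h^*(X_i)\twoheadrightarrow h^*(A_i))$ is a two-sided ideal in $h^*(X_i)$, so $B_i \cdot C_i \subseteq C_i$ and $C_i \cdot C_i \subseteq C_i$. Consequently, for any $S,\tau \subseteq I$,
$$C^S \cdot Y^{I,\tau} \;\subseteq\; Y^{I,\, S \cup \tau}.$$
Since every superset of a non-simplex is itself a non-simplex, this gives $(R) \subseteq \bigoplus_{\sigma \subseteq I,\, \sigma \notin K} Y^{I,\sigma}$. The reverse inclusion is immediate: when $\sigma \notin K$, $Y^{I,\sigma} = C^\sigma \cdot (1 \otimes B^{I\setminus\sigma}) \subseteq (R)$. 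Hence
$$h^*(X^I)/(R) \;=\; \bigoplus_{\sigma \in K,\, \sigma \subseteq I} Y^{I,\sigma}.$$

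Finally I would regroup the decomposition of Theorem~\ref{Ps}(2) by setting $J = [m] \setminus I$, producing
$$h^*(\zk) \;=\; \bigoplus_{I \cup J = [m],\, I \cap J = \emptyset} E^J \otimes \Bigl(\bigoplus_{\sigma \in K,\, \sigma \subseteq I} \widetilde{h}^*(\Sigma|N(I,\sigma)|) \otimes Y^{I,\sigma}\Bigr),$$
and substitute the preceding identification: the bracketed inner sum equals $\widetilde{h}^*(\Sigma|N_J|) \otimes h^*(X^I)/(R)$, read term by term on the direct summands of $h^*(X^I)/(R)$, with $|N_J|$ denoting $|N(I,\sigma)|$ on the summand indexed by $\sigma$. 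The main (and only non-bookkeeping) obstacle is the ideal computation, whose key algebraic input is that each $C_i$ is an ideal in $h^*(X_i)$; the rest is a straightforward reindexing.
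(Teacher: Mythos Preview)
Your argument is correct and matches the paper's (implicit) approach: the paper states Corollary~\ref{srstar} as ``a convenient reformulation of Theorem~\ref{Ps}'' without further proof, and the identification $h^*(X^I)/(R)\cong\bigoplus_{\sigma\in K,\,\sigma\subseteq I}Y^{I,\sigma}$ you carry out is exactly the computation the paper later uses in the proof of Corollary~\ref{cor:5.8}. One small correction: the phrase ``split-surjective setting'' is inaccurate, since under the strong freeness condition of Definition~\ref{def:bas} the map $h^*(X_i)\to h^*(A_i)$ need not be surjective (its cokernel is $E_i$); the point you actually need, that $C_i=\ker\bigl(h^*(X_i)\to h^*(A_i)\bigr)$ is an ideal, holds simply because the restriction map is a ring homomorphism.
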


  The reduced Poincare series   for $H^*(\zkh) $ is recorded as the following corollary.

  \begin{cor}\label{spectralsequencePS}

            $$\overline{P}(H^*(\zkh)) = \underset{I, \sigma}{\Sigma} \quad  t\overline{P}(Y^{I,\sigma})\times   \overline{P}(H^*(|N(I,\sigma)|) \times \overline{P}(E^{[m]-I}).$$

   where $\overline{P}(H^*(\emptyset))= 1/t$.

  \end{cor}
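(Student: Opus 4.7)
The plan is to derive Corollary \ref{spectralsequencePS} as a direct numerical consequence of the additive decomposition in Theorem \ref{Ps}(1). Once one has the $h^*$-module isomorphism
\[
\widetilde{h}^*(\zkh) \;=\; \bigoplus_{I\subset [m],\,\sigma\in K} E^{[m]-I}\otimes \widetilde{h}^*(\Sigma|N(I,\sigma)|)\otimes Y^{I,\sigma},
\]
taking reduced Poincaré series is a purely formal operation: direct sums of graded $h^*$-modules add Poincaré series, and tensor products multiply them.

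The key numerical step is the identity $\overline{P}(\widetilde{h}^*(\Sigma X)) = t\,\overline{P}(\widetilde{h}^*(X))$, which expresses the degree shift induced by the reduced suspension and accounts for the factor of $t$ that appears in the corollary. Thus for each pair $(I,\sigma)$ the summand contributes
\[
\overline{P}(E^{[m]-I})\cdot t\,\overline{P}(\widetilde{h}^*(|N(I,\sigma)|))\cdot \overline{P}(Y^{I,\sigma}),
\]
and by commutativity of multiplication the $t$ can be grouped with $\overline{P}(Y^{I,\sigma})$ as written in the statement. Summing over all $(I,\sigma)$ yields the displayed formula.

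The only subtlety is the boundary case in which $N(I,\sigma)=\emptyset$, i.e.\ when $\sigma$ is a maximal face of $K_I$. In that case the summand in Theorem \ref{Ps}(1) is just $E^{[m]-I}\otimes Y^{I,\sigma}$, so its Poincaré series contribution should be $\overline{P}(E^{[m]-I})\cdot \overline{P}(Y^{I,\sigma})$ with no extra factor of $t$. The convention $\overline{P}(H^*(\emptyset))=1/t$ is designed precisely so that the uniform factor $t\cdot \overline{P}(H^*(|N(I,\sigma)|))$ collapses to $1$ in this case, and the formula applies without separate cases.

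No further obstacle is present: the proof amounts to checking that all the factors in the additive decomposition of Theorem \ref{Ps}(1) are free of finite type (which is built into the strong freeness hypothesis of Definition \ref{def:bas} used throughout this section), so that each Poincaré series is well-defined and the Künneth-type multiplicativity applies summand by summand. The main place where one must be careful is bookkeeping the suspension shift and the empty-complex convention simultaneously, but once the convention $\overline{P}(H^*(\emptyset))=1/t$ is adopted, the corollary follows at once from Theorem \ref{Ps}(1).
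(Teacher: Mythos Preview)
Your proposal is correct and matches the paper's approach: the corollary is simply ``recorded'' as an immediate consequence of Theorem~\ref{Ps}(1), with no separate proof given. Your explanation of the suspension shift producing the factor $t$ and the convention $\overline{P}(H^*(\emptyset))=1/t$ handling the maximal-simplex case makes explicit exactly what the paper leaves implicit.
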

  
 \begin{remark}  The Poincare series was  first computed in \cite{cartan}.

 \end{remark}

   \begin{exm} \label{SSexample} We illustrate Corollary \ref{spectralsequencePS} with the example of  $K $ a simplicial complex with 3 vertices and  edges $\{1,3\}, \{1,2\}$.    $H^*(X) =\Z\{ b_4, c_6\}, \quad H^*(A)=\Z\{e_2, b_4\} $.   The cases in the example are indexed by the $I$ in Corollary \ref{spectralsequencePS} starting with the empty set and building up to $I=\{1,2,3\}$.  For each $I$ there are the sub cases indexed by the simplices $\sigma \subset I$.
   \begin{itemize}
   \item For $I=\emptyset$.  the only possible simplex,  $\sigma$, is the empty set and $N(I,\emptyset$) is contractible.  So there is no contribution to the Poincare series.

   \item The next case is $I=\{1\}$. There are two possible simplices,  namely $\sigma=\emptyset$ and $\sigma=\{1\}$.
   \begin{enumerate}
     \item $\sigma = \emptyset$.

      In this case $Y^{I,\emptyset}= b_4.$  and $|N(I,\emptyset)| = |\{ \{2\},\{3\}\}|=S^0$ which contributes 

      $$t\overline{P}(Y^{I,\sigma}) \overline{P}(H^*(|N(I,\sigma)|) \overline{P}(E^{[m]-I}) =t (t^4)(t^2)^2=t^9$$to the Poincare series.
      
      \item $\sigma=\{1\}$.

       In this case $Y^{I,\sigma} = c_6.$ and     $|N(I,\sigma)|= |\{2\},\{3\}| =S^0$.  Thus the term 

       $$t\overline{P}(Y^{I,\sigma}) \overline{P}(H^*(|N(I,\sigma)|) \overline{P}(E^{[m]-I})=t(t^6)(t^2)^2 = t^{11}$$ is contributed to the Poincare series.

  \end{enumerate}

  \item Similarly for $I=\{2\}$ there are the cases 

  \begin{enumerate}
  \item $\sigma=\emptyset$ with  $N(I,\sigma)=$ the edge $ \{1,3\}$ which is contractible.
  \item $\sigma = \{2\}$ with  $N(I,\sigma) = \{1\}$ which is also contractible.

  \end{enumerate}
  \item $I= \{3\}$.  This is similar to $I=\{2\}$.

  \begin{enumerate}
  \item $\sigma=\emptyset$,  $N(I,\sigma)=$ the edge $ \{1,2\}$ which is contractible.
  \item $\sigma = \{3\}$,  $N(I,\sigma) = \{1\}$ which is contractible.

  \end{enumerate}

  \item For $I= \{1,2\}$ there are $4$ possible simplices

  \begin{enumerate}
  \item $\sigma= \emptyset$ with  $N(I,\emptyset)= \{3\}$ which is contractible.

  \item $\sigma = \{1\}$ with   $N(I,\sigma) = \{3\}$ which is contractible.

  \item $\sigma = \{2\}$, with $Y^{I,\sigma} = b_4 \otimes c_6$ and $N(I,\sigma)=\emptyset$ which contributes 
  $$  t\overline{P}(Y^{I,\sigma}) \overline{P}(H^*(|N(I,\sigma)|) \overline{P}(E^{[m]-I})=(t^6t^4)(t^2)=t^{12}.  $$to the Poincare series.
  
\item  $\sigma=\{1,2\}$

  $Y^{I,\sigma}= c_6\otimes c_6$, \quad   $N(I, \{1,2\}) = \emptyset$.  So this case  contributes $t^{14}$ to the Poincare series.

  \end{enumerate}
  \item $ I= \{1,3\}$ is identical to $I=\{1,2\}$ so we get a contribution of $t^{12}$ and $t^{14}$ to the Poincare series.

  \item $I=\{2,3\}$

  \begin{enumerate}
  \item  The cases  there are $3$ possible simplices: $\sigma = \emptyset, \quad \{2\}$, and $\sigma= \{3\}$.  For all $3$ simplices   $N(I,\sigma)=\{1\}$ which is contractible.

  \end{enumerate}

  \item Finally $I=\{1,2,3\} =K$.   For all $\sigma$, $N(I,\sigma)=\emptyset$.

  The sub simplices of $K$ contribute to the Poincare series as follows:

  \begin{enumerate}
  \item

  \begin{enumerate}
  \item
  $\sigma = \{1\}, Y^{I,\sigma}= c_6 \otimes b_4 \otimes b_4$
  \item
  $\sigma =  \{2\}, Y^{I,\sigma}= b_4 \otimes c_6 \otimes b_4$

  \item   $\sigma = \{3\}, Y^{I,\sigma}= b_4 \otimes b_4 \otimes c_6$
 \end{enumerate}
  each contributes $  t^{14}$ to $\overline{P} .$

   \item
   \begin{enumerate}
  \item $\sigma = \{1,2\},   Y^{I,\sigma}= c_6 \otimes c_6 \otimes b_4$
  \item   $\sigma =  \{1,3\} , Y^{I,\sigma}=  c_6 \otimes b_4 \otimes c_6$
  \end{enumerate}
   each contributes $t^{16} $to  $\overline{P}.$

  \item $\sigma = \emptyset,Y^{I,\sigma}=b_4 \otimes b_4 \otimes b_4$, contributes $t^{12} $ to $\overline{P}$

  \end{enumerate}

  Adding all the terms we get have the Poincare series for the cohomology of \\
  $\zkh$.

  $$\overline{P}(H^*(\zkh)) = t^9+t^{11}+3t^{12}+5t^{14}+2t^{16}.$$

   \end{itemize}

   \end{exm}

   \

   The next two corollaries describe  summands in $h^*(\zkh)$.  The summand in Corollary \ref{cor:summand} depends on the  $E_i$.  The summand in corollary \ref{cor:5.8} is natural since $C_i$ is the kernel of the map $H^*(X_i) \to H^*(A_i)$ which is a functor of the pairs $(X_i,A_i)$.

   \begin{cor}\label{cor:summand} $\widetilde{h}^{*}(\Sigma|K|) \otimes E_1 \otimes  \cdots \otimes E_m$ is a summand $\widetilde{h}^*(\zkh)$.
   \end{cor}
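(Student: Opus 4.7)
The plan is to derive this corollary directly from the cohomology decomposition already established in Theorem \ref{Ps}(1), by isolating the single summand in that direct sum which corresponds to the choice $I = \emptyset$ and $\sigma = \emptyset$. Since Theorem \ref{Ps}(1) asserts that
$$\widetilde{h}^*(\zkh) = \bigoplus_{\substack{I \subset [m] \\ \sigma \in K,\ \sigma \subset I}} E^{[m]-I} \otimes \widetilde{h}^*(\Sigma|N(I,\sigma)|) \otimes Y^{I,\sigma}$$
as $h^*$-modules, each individual term on the right-hand side is a direct summand, and it suffices to show that one such term equals the claimed expression.

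First I would examine the factor $E^{[m]-I}$. Taking $I = \emptyset$ gives $[m] - I = [m]$, so $E^{[m]-I} = E_1 \otimes \cdots \otimes E_m$, which is exactly the first desired factor. With $I = \emptyset$, the only simplex $\sigma$ of $K$ with $\sigma \subset I$ is $\sigma = \emptyset$, so this choice is forced. Next I would identify $Y^{I,\sigma}$ for $I = \emptyset$, $\sigma = \emptyset$: by definition $Y^{\emptyset, \emptyset}$ is the empty tensor product (there are no indices $i_t \in I$), which is the ground ring $h^*$ and contributes trivially to the tensor product.

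Finally I would compute $N(\emptyset, \emptyset)$. By item (3) of the observations following Definition \ref{def:N(Isigma)}, we have $N(I,\emptyset) = K_{[m]-I}$, and with $I = \emptyset$ this gives $N(\emptyset, \emptyset) = K_{[m]} = K$. Hence the $(\emptyset, \emptyset)$-summand on the right of the Theorem \ref{Ps}(1) decomposition is
$$E^{[m]} \otimes \widetilde{h}^*(\Sigma|K|) \otimes Y^{\emptyset,\emptyset} \;\cong\; \widetilde{h}^*(\Sigma|K|) \otimes E_1 \otimes \cdots \otimes E_m,$$
which is the asserted summand. There is really no obstacle in this argument beyond the bookkeeping of unwinding the definitions of $N(I,\sigma)$ and $Y^{I,\sigma}$ at the degenerate values $I = \emptyset$, $\sigma = \emptyset$; the substantive content lives in Theorem \ref{Ps}, which has already been established.
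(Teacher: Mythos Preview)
Your proof is correct and follows essentially the same route as the paper: the paper's proof simply observes that the claimed summand is the $I=\emptyset$ term in Theorem~\ref{Ps}, noting that $I=\emptyset$ forces $\sigma=\emptyset$ and that $N(\emptyset,\emptyset)=K$. Your version spells out the identifications of $E^{[m]-I}$, $Y^{\emptyset,\emptyset}$, and $N(\emptyset,\emptyset)$ in slightly more detail, but the argument is the same.
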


 \begin{proof}   This corresponds  to the summands with $I=\emptyset$ in Theorem \ref{Ps}.  Specifically if $I=\emptyset$ then $\sigma=\emptyset$ and $N(\emptyset, \emptyset)=K$.
    \end{proof}

\begin{remark} Corollary \ref{cor:summand} generalizes \cite[Theorem 1.12]{bbcg3} which describes the cohomology of $\widehat{Z}(K; (\underline{CX},\underline{X}))$.   In this case $B_i=C_i=0$ and  $E_i=\widetilde{H}^*(X_i).$   The only summand  is  $I=\emptyset$. \end{remark}

\begin{comment}

Corollary  \ref{srstar} appears to prove  Theorem 1.12 of \cite{bbcg3}  which describes  the cohomology of $H^*(Z(K; (\underline{CX}, \underline{X})))$ as the  subring of

$$H^*(Z(K;(D^1,S^0))\otimes H^*(X_1) \otimes \cdots \otimes H^*(X_m)$$
defined by

\begin{equation}\label{d1s0}
\underset{I \subset [m]}{\bigoplus}H^*(\Sigma K_I|) \bigotimes \widetilde{H}^*(\widehat{X}^I) \otimes 1^{[m]-I} \subset H^*(Z(K;(D^1,S^0))\otimes H^*(X_1) \otimes \cdots \otimes H^*(X_m)
 \end{equation}
 
 \
 
where $\widehat{X}^I = X_{i_1} \wedge \cdots \wedge X_{i_t}$ and $1^{[m]-I}$ denotes a factor of $1$ in the remaining coordinates.
  
\end{comment}

  However Theorem   1.12 of  \cite{bbcg3}   is not a consequence of Corollary \ref{cor:summand} since        the complicated bookkeeping involved to evaluate all the differentials in the spectral sequence were subsumed by        \cite[Theorem 1.12]{bbcg3}  which  was used to prove Theorems \ref{Ps} and therefore Corollary \ref{cor:summand}.  
 \

   \begin{cor}\label{cor:5.8}

Let $I \subset h^*(X_1 \times \cdots \times X_m) $ be the ideal generated by $C_{i_1}\otimes \cdots \otimes  C_{i_q} $ where $(i_1,\cdots, i_q)$ is not a simplex in $K$.  Then
$h^*(X_1 \times \cdots \times X_m)/(I) $ is a sub-ring of   $\widetilde{h}^*(\zk)$.
  \end{cor}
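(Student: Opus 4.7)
The plan is to realize $h^*(X_1 \times \cdots \times X_m)/(I)$ as the image of the ring homomorphism $\pi: h^*(X_1 \times \cdots \times X_m) \to h^*(\zk)$ induced by the natural inclusion $\zk \hookrightarrow X_1 \times \cdots \times X_m$. Since $\pi$ is a ring homomorphism, its image is automatically a subring of $h^*(\zk)$, and it suffices to identify $\ker(\pi)$ with the ideal $I$.

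First I would verify $I \subseteq \ker(\pi)$. For any generator $c_{i_1} \otimes \cdots \otimes c_{i_q}$ of $I$ (with $(i_1, \ldots, i_q) \notin K$ and $c_{i_j} \in C_{i_j}$), I check that it restricts to zero on every $D(\tau) \subseteq \zk$ with $\tau \in K$. Because simplicial complexes are closed under subsets, $\{i_1, \ldots, i_q\} \notin K$ forces $\{i_1, \ldots, i_q\} \not\subseteq \tau$, so some $i_j \notin \tau$ and the $i_j$-th factor of $D(\tau)$ is $A_{i_j}$. By Definition \ref{def:bas}, $C_{i_j}$ is the kernel of $h^*(X_{i_j}) \to h^*(A_{i_j})$, hence $c_{i_j}$ vanishes on $A_{i_j}$ and the whole product vanishes on $D(\tau)$, and therefore on the colimit $\zk$.

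For the reverse inclusion $\ker(\pi) \subseteq I$, I would lean on Theorem \ref{Ps}. Using K\"unneth and the splitting $h^*(X_i) = B_i \oplus C_i$,
\[
h^*(X_1 \times \cdots \times X_m) \;=\; \bigoplus_{S \subseteq [m]} C^S \otimes B^{[m]-S}.
\]
Since $C_i$ is an ideal in $h^*(X_i)$ (the kernel of the ring map $\iota_i : h^*(X_i) \to h^*(A_i)$), the ideal $I$ is exactly $\bigoplus_{S \notin K} C^S \otimes B^{[m]-S}$, and therefore $h^*(X^{[m]})/I$ is identified as a graded $h^*$-module with $\bigoplus_{\sigma \in K} C^\sigma \otimes B^{[m]-\sigma}$. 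The decomposition of Theorem \ref{Ps}(2), specialized to the index $I = [m]$ in its summation (so that $E^{[m]-I}$ is trivial and $|N([m], \sigma)|$ is empty), matches this module exactly with a natural summand of $h^*(\zk)$.

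The main obstacle is verifying that the induced map $\bar\pi : h^*(X^{[m]})/I \to h^*(\zk)$ is precisely the inclusion of this summand, rather than merely that source and target have matching $h^*$-rank. This follows by tracking how a representative class $y_1 \otimes \cdots \otimes y_m$ (with $y_i \in B_i$ or $C_i$ and $\sigma := \{i : y_i \in C_i\} \in K$) arises in the spectral sequence of Section \ref{sec:spectral sequence}: such a class restricts non-trivially to $D(\sigma)$ but trivially to $F_{wt(\sigma)-1}\zk$, and by the $E_\infty$-identification underlying Theorem \ref{Ps} it represents the generator of $Y^{[m], \sigma}$ inside the $I = [m]$ summand. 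Consequently $\bar\pi$ is injective, $\ker(\pi) = I$, and the image of $\pi$ is a subring of $h^*(\zk)$ isomorphic as a ring to $h^*(X^{[m]})/(I)$.
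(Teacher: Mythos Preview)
Your proposal is correct and follows essentially the same route as the paper: both arguments identify $h^*(X^{[m]})/I$ with the image of $h^*(\iota)$, match this image with the $I=[m]$ summand $\bigoplus_{\sigma\in K} Y^{[m],\sigma}$ of Theorem~\ref{Ps}, and compute $\ker h^*(\iota)=I$ via the coordinatewise $B_i\oplus C_i$ decomposition. Your treatment is in fact more explicit than the paper's in two places: you check $I\subseteq\ker\pi$ directly on each $D(\tau)$, and you justify the injectivity of $\bar\pi$ by showing that a class with support $\sigma$ vanishes on $F_{wt(\sigma)-1}$ (since $wt(\tau)<wt(\sigma)$ forces $\sigma\not\subseteq\tau$) and survives nontrivially to $E_\infty^{wt(\sigma),*}$, whereas the paper simply asserts the surjection onto $\bigoplus_{\sigma\in K} Y^{[m],\sigma}$.
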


\begin{proof}  There are the maximal summands, $Y^{[m],\sigma}$.
$$ \underset{\sigma \in K}{\bigoplus} Y^{[m],\sigma} \simeq  h^*(X_1 \times \cdots \times X_m)/(I) $$
as $h^*$ modules.
   The inclusion,  $$ \iota: \zk \to X_1 \times  \cdots \times X_m $$ induces a  surjection in cohomology onto $\underset{\sigma \in K}{\bigoplus} Y^{[m],\sigma} $.     To prove corollary \ref{cor:5.8} it suffices to show that $I $  is isomorphic to the kernel of $h^*(\iota)$.

\

We may write $h^*(X_i) = B_i \oplus C_i$.  The tensor product
 $h^*(X_1) \otimes \cdots \otimes h^*(X_m)$ may now be written as a sum of terms of the form
 $S_1 \otimes \cdots \otimes S_m$ where $S_i$ is $B_i$ or $C_i$.  The map of
 rings, $ h^*(X_1) \otimes \cdots \otimes h^*(X_m) \to h^*(\zk)$ is surjective with kernel given by the ideal $I$.

   \end{proof}
   
   \
   
\begin{remark}\label{rem:naturality}
   The spectral sequence is natural with respect to maps of pairs
$$(X_i,A_i) \to (Y_i,D_i)$$ 
 (In fact it is a functor of strong $h^*-$cohomology maps of pairs as described in Corollary  \ref{cor:specSeqstrong isomorphism}).
However the description of the cohomology of $\zk$  given in  Theorem \ref{Ps} is not natural.  It depends on the choice of splitting of $h^*(X_i)$ as $B_i \oplus C_i$ and splitting of $h^*(A_i)$ as $E_i \oplus B_i$.   We now describe how the decomposition in Theorem \ref{Ps} and the naturality of the spectral sequence interact.

\

To this end suppose there are  maps of long exact sequences

\begin{equation}\label{eq:naturalityofdecomposition}
\begin{array}{ccccccccccc}
\cdots \overset{\delta}{\leftarrow}&h^*(A_i) &\leftarrow&h^*(X_i)&\leftarrow & \widetilde{h}^*(X_i/A_i) & \leftarrow \cdots\\
&\uparrow{g_i} && \uparrow{f_i} && \uparrow\\
\cdots \overset{\delta}{\leftarrow} &h^*(D_i)&\leftarrow&h^*(Y_i) &\leftarrow & \widetilde{h}^*(Y_i/D_i) &\leftarrow \cdots
\end{array}
\end{equation}

\

Diagram \ref{eq:naturalityofdecomposition} induces a map of spectral sequences

$$E_r(Z(K;(\underline{Y},\underline{D}))) \overset{(f,g)^*}{\to} E_r(\zk) $$and a map

$$h^*(Z(K;(\underline{Y},\underline{D}))) \overset{\ell}{\to} h^*(\zk).$$
\

The map $\ell$ is now described in terms of the decompositions given by  Definition \ref{def:bas}.  

\

For each $i$ there is a decomposition of the $h^*-$modules in the the top row

\

$$h^*(A_i) = E_i \oplus B_i, \quad h^*(X_i) = B_i \oplus C_i.$$

\

The bottom row has a  corresponding decomposition  

\

$$h^*(D_i) =E_i^{\prime} \oplus B_i^{\prime}, \quad h^*(Y_i) =   B_i^{\prime} \oplus C_i^{\prime}.$$

\

Suppose   $\alpha \in h^*(Z(K;  (\underline{Y}, \underline{D})))$ is a class appearing in a summand  $$ (E^{\prime})^{[m]-I} \otimes \widetilde{h}^*(\Sigma|N(I,\sigma)|) \otimes (Y^{\prime})^{I,\sigma}$$ 
of the decomposition of  $h^*(Z(K;(\underline{Y},\underline{D}))) $ given by Theorem \ref{Ps}.

\

Specifically  
 \begin{equation}
 \label{alphadecomposition}
 \alpha=\underset{J}{\bigotimes} (e_j^{\prime}) \otimes n \otimes \underset{|\sigma|}{\bigotimes} ( c_s^{\prime})\otimes  \underset{L}{\bigotimes} (b^{\prime}_{\ell}) 
 \end{equation}
  where $I=L \cup |\sigma|$, $J=[m]-I$ and $n \in \widetilde{h}^*(\Sigma|N(I,\sigma)|)$

 \
 
 In order to compute  $\ell(\alpha) \in h^*(\zk)$ we note that the decompositions of $h^*(A_i)$ and $h^*(X_i) $ into $E_i, B_i$ and $C_i$ imply unique representations $$ g_i(e_i^{\prime})= e_i+\overline{b}_i, \hspace {.2 in}   f_i(b_i^{\prime})=b_i+\overline{c}_i \mbox{   and }   f_i(c_i^{\prime}) = c_i$$
where  $e_i \in E_i,\quad b_i$ and $ \overline{b}_i \in B_i, \quad$ $\overline{c}_i$ and $c_i \in C_i$.

\

\

Formally substitute  $e_i+\overline{b}_i$ for $e^{\prime}_i$, $b_i+\overline{c}_i$ for $b_i^{\prime}$ and $c_i$ for $c^{\prime}_i$ in (\ref{alphadecomposition}).  The resulting expression is a sum of terms with factors  $e_i,$ $ b_i,$ $ \overline{b}_i,$ $\overline{c}_i,$ $c_i$ and $n \in \widetilde{h}^*(\Sigma|N(I,\sigma)|)$.  Each summand determines a summand in $\ell(\alpha)$.  There are a number of cases.

\

The easiest case is the summand without any over-lined factors.  For this term  the map of spectral sequences  $(f,g)^*$  respects the decomposition of Definition \ref{def:bas} at the $E_1$ page and contributes the summand

$$\underset{J}{\bigotimes} (e_j) \otimes n \otimes \underset{|\sigma|}{\bigotimes} ( c_s) \underset{L}{\bigotimes} (b_{\ell})$$ to $\ell(\alpha)$.

\

Now suppose there are terms of the formal sum with non-zero $\overline{b}_i$ factors    i.e. there is an indexing set $Q \subset J$ where $\overline{b}_q$ is not zero for $q \in Q$.  In this situation  there are formal summands  in

\

$$E^{J\setminus Q}  \otimes n \otimes C^{|\sigma| } \otimes B^{L \cup Q}.$$

\

In terms of the decomposition of Theorem \ref{Ps} these terms contribute  classes in 

$$E^{J \setminus Q} \otimes \widetilde{h}^*(\Sigma |N(I \cup Q,\sigma|)) \otimes C^{|\sigma|} \otimes B^{ L \cup Q}$$ to $\ell(\alpha)$ (recall that $I = L \cup |\sigma|$).

\

The simplicial complex $N(I \cup Q,\sigma)$ is a sub-simplicial complex of $N(I ,\sigma)$.  To prove this suppose   $\tau \in N(I \cup Q,\sigma)$ then $\tau \cup \sigma \in K$ and $|\tau| \subset J \setminus Q \subset J$ so $\tau \in N(I,\sigma)$.

\

The formal summand
$$E^{J\setminus Q}  \otimes n \otimes C^{|\sigma| } \otimes B^{L \cup Q}.$$

\
contributes

$$E^{J\setminus Q}  \otimes \iota^*(n)  \otimes C^{|\sigma| } \otimes B^{L \cup Q}.$$ 

\

to $\ell(\alpha)$   where $\iota^*$ is induced by the inclusion

\

$$\iota^*: \widetilde{h}^*(\Sigma |N(I,\sigma)|) \to \widetilde{h}^*(\Sigma |N(I \cup Q,\sigma)|).$$

\

Indeed $\iota^*$ at the cochain level is the map which sends the dual of a simplex, $\tau$ to zero if $\tau$ is not a simplex in $N(I \cup Q,\sigma)$ and to the dual of $\tau$ if $\tau$ is a simplex in 
$N(I \cup Q,\sigma)$.   This agrees with the map $(f,g)^*$.

\

 Finally suppose there are terms of the formal sum with non-zero $\overline{c}_i$ factors.  i.e there are indexing sets $P \subset L$ where $\overline{c}_{p}\neq 0$ for $p \in P$.  These formal summands are classes in

\

$$E^J \otimes n \otimes C^{|\sigma| \cup P} \otimes B^{ L\setminus P}.$$

\

In terms of the decomposition of Theorem \ref{Ps} these terms will  contribute  classes in 

$$E^J \otimes \widetilde{h}^*(\Sigma |N(I,\sigma \cup P|)) \otimes C^{|\sigma| \cup P} \otimes B^{ L\setminus P}.$$

 \
 
    We shown that    $N(I,\sigma \cup P)$ is a sub complex of $N(I,\sigma)$ if  $\sigma\cup P$ is a simplex in $K$ (otherwise  thie summand lies in the zero group).  Suppose  $\sigma\cup P$ is a simplex in $K$, say $\tau$.   Let $ \rho \in N(I,\tau)$.  Then $\rho$ has vertices in $J$ and $\rho \cup \sigma \cup P$ is a simplex in $K$ which implies $\rho \cup \sigma$ is also a simplex in $K$ and $\rho \in N(I,\sigma)$.   The summands of $\ell(\alpha)$ with $\overline{c}_p$ factors are represented by classes which jump filtration in the spectral sequence with $n$  replaced by $\iota^*(n)$  ($\iota: N(I,\tau) \to N(I,\sigma)$  the inclusion).

\
\begin{comment}
The claim follows since for $p \in P$  the factors $b_p^{\prime}$ in  $\alpha$ are supported on $h^*(D_p)$  in $E_1$.  The map of spectral sequences induced by (\ref{eq:naturalityofdecomposition})  sends 
$h^*(D_p)$ to $h^*(A_p)$.  In particular the terms containing the  classes $c_p$ are zero in the spectral sequence for $h^*(\zk)$.   This means that these terms have jumped filtration and are represented by terms whose support contains factors $h^*(X_p/A_p)$ corresponding to the extra vertices in $\tau$.  
\end{comment}
\

 This description of $\ell$ will be applied in  (\ref{productmap}).
\end{remark}

\

 \section{\bf Products}\label{sec: products}

The purpose of this section is to describe the ring structure in $H^*(\zk ,\mathbb{ R})$ with $\mathbb{R}$ a commutative ring.   (there are similar results for $h^*(\zk)$). As in the previous sections $(X_i,A_i)$ are assumed to be based CW pairs which satisfy the freeness condition of Definition \ref{def:bas}.  We write $H^*(\zk)$ for  $H^*(\zk ,\mathbb{ R})$ in the sequel.

\

Recall the computation of $H^*(\zk)$ as an $h^*-$module  given by \ref{Ps}.

$$ h^*(\zk)) = \underset{I \subset [m], \sigma \in K}{\bigoplus} E^{[m]-I} \otimes \widetilde{h}^*(\Sigma|N(I,\sigma)|) \otimes Y^{I,\sigma}. $$

\

In particular $h^*(\zk)$ is generated, as an $h^*-$module by monomials
$$n \otimes a_1 \otimes \cdots \otimes a_m $$
with $a_i \in E_i, B_i$ or $C_i$ and 

$$n \in \begin{cases}
 \widetilde{H}^*(\Sigma |N(I,\sigma)|) & \mbox{ if }  
          \sigma = \{i| a_i \in C_i\}\subset [m] \mbox{ is a simplex in } K \\

0 & \mbox{ otherwise. }
\end{cases}$$where  
  $I=\{i| a_i \in C_i \mbox{ or } B_i\} \subset [m] $ and $N(I,\sigma)$ is defined in Definition \ref{def:N(Isigma)}.

\

To describe $$H^*(\zk)$$ as a ring it suffices to define a paring on the summands of Theorem \ref{Ps}

\

$$ \Big[ E^{[m]-I_1} \otimes \widetilde{H}^*(\Sigma|N(I_1,\sigma_1)|) \otimes Y^{I_1,\sigma_1}\Big] \otimes  \Big[E^{[m]-I_2} \otimes \widetilde{H}^*(\Sigma|N(I_2,\sigma_2)|) \otimes Y^{I_2,\sigma_2}\Big] \overset{\cup}{\to} H^*(\zk)$$

\

Specifically suppose that
$$\alpha = n_{\alpha} \otimes a_1 \otimes \cdots \otimes a_m \in  E^{[m]-I_1} \otimes \widetilde{H}^*(\Sigma|N(I_1,\sigma_1)|) \otimes Y^{I_1,\sigma_1}$$

\
where $n_{\alpha}\in \widetilde{H}^*(\Sigma|N(I_1,\sigma_1)|)$ and $a_i \in E_i, B_i$ or $C_i$.

\

$$\gamma = n_{\gamma} \otimes g_1 \otimes \cdots \otimes g_m \in E^{[m]-I_2} \otimes \widetilde{H}^*(\Sigma|N(I_2,\sigma_2)|) \otimes Y^{I_2,\sigma_2}$$

\

where $n_{\gamma} \in \widetilde{H}^*(\Sigma|N(I_2,\sigma_2)|) $ and $g_i \in E_i, B_i $ or $C_i$.

\

We will describe $\alpha \cup \gamma$ in terms of a coordinate wise multiplication of $a_i$ and $g_i$ and a paring 

\begin{equation}
\label{pairing}H^*(\Sigma | N(I_1, \sigma_1)|) \otimes H^*(\Sigma |N(I_2,\sigma_2)|) \to H^*(\Sigma |N(I_3, \sigma_3)|)\end{equation}
 where $(I_3,\sigma_3)$ will be defined in terms of $(I_1,\sigma_1)$ and $(I_2, \sigma_2)$
 
 \
 
 The pairing, (\ref{pairing}) will be defined in terms of the $\ast-$product defined in \cite{bbcg3} which we now recall.

\

Writing $ Z(K)$ for $\zk$ and $\widehat{Z}(K_I)$ for $\widehat{Z}(K_I; (\underline{X}, \underline{A})_I)$,   {\it partial diagonals}

$$\Delta^{J,L}_I:\widehat{Z}(K_I)\to
\widehat{Z}(K_J)\wedge\widehat{Z}(K_L)$$  ($J\cup L=I$)
are defined  which fit into  a diagram

\[\begin{CD}
Z(K) &@>\widehat{\Delta}>>
&Z(K)\wedge Z(K) \\
@V \Pi VV&&@VV\Pi \wedge \Pi V\\
\widehat{Z}(K_I) &@> \Delta^{J,L}_I >>&
\widehat{Z}(K_J)\wedge\widehat{Z}(K_L).
\end{CD} \]
where $\widehat{\Delta}$ is the reduced diagonal and $\Pi$ is the projection.

\

The definition of the  partial diagonals and projections are as follows  (the notation is as in Definitions \ref{defin:gmac} and \ref{defin:smash.product.moment.angle.complex}).

\begin{enumerate}
	\item For $I \subset [m]$ and $\sigma \in K$ there is the projection followed by the collapsing map
	$$\pi: D(\sigma) \to D(\sigma \cap I) \to \widehat{D}(\sigma \cap I).$$  These composites  are compatible with the maps in the colimit and induce the vertical maps
	$$\Pi : Z(K) \to \widehat{Z}(K_I).$$
	
	\
	
	\item Let $W^{J,L}_I$ be defined by
	$$
	W^{J,L}_I =
	\begin{cases}
	Y_i & \mbox{ if } i \in I - J \cap L \\
	Y_i \wedge Y_i & \mbox{ if } i \in J \cap L
	\end{cases}
	$$
	where $Y_i$ is either $X_i$ or $A_i$ as in Definition \ref{defin:smash.product.moment.angle.complex}.
	
	\
	There is a homoemorphism
	
	$$Sh: W^{J,L}_I \to \widehat{Y}^J \wedge \widehat{Y}^L$$  given by the evident shuffle which is compatible with the maps in the colimit.
	
	\item Define
	$$ Y^I \to \widehat{Y}^J \wedge \widehat{Y}^L$$ by  first mapping into $W^{J,L}_I$ by
	
	\
	
	\begin{enumerate}
		\item the identity of $Y_i$ if $i \in I - J \cap L$  or
		
		\item the diagonal of $Y_i$ if $ i \in J \cap L$
	\end{enumerate}
	
	\
	
	\noindent followed by $Sh$.  The maps induce a map of colimits which define the partial diagonal, $\Delta^{J,L}_I$.
	
\end{enumerate}

Given cohomology classes $u\in H^p(\widehat{Z}(K_J)), v\in H^q(\widehat{Z}(K_L))$, the {\it $\ast-$product } is defined by
\begin{equation} \label{starproductI}u*v=(\Delta^{J,L}_I)^*(u\otimes v) \in H^{p+q}(\widehat{Z}(K_I)).\end{equation}
In \cite{bbcg3} the ring structure  of $H^*(Z(K,(\underline{X},\underline{A}))$ is shown to be induced by  the $\ast$-product.

\

The special case of \noindent $(\underline{X},\underline{A})=(D^1,S^0)$ is particularly important.

\

The splitting of  Theorem, \ref{thm:decompositions.for.general.moment.angle.complexes}, and Theorem \ref{thm:null.A} imply there  are homotopy equivalences

\

$$\Sigma Z(K;(D^1,S^0)) \to \underset{I \subset [m]}{\bigvee} \Sigma \widehat{Z}(K_I; (D^1,S^0))$$
and
\
$$\widehat{Z}(K_I;(D^1,S^0))\stackrel{\simeq}{\to}|K_I|*(\widehat{S}^0)^I\simeq \Sigma |K_I|.
$$

\

With  $I = J \cup L$ a pairing

\begin{equation}\label{equa:starD1} H^*(\Sigma |K_J|) \otimes H^*(\Sigma |K_L|) \to H^*(\Sigma|K_I|)\end{equation}
is induced by
the  partial diagonals  map $\Delta^{J,L}_I.$
$$ \Sigma |K_I|  \simeq  \widehat{Z}(K_I;(D^1,S^0))  \overset{\Delta_I^{J,L}}{\to} \widehat{Z}(K_J;(D^1,S^0)) \wedge \widehat{Z}(K_L ;(D^1,S^0)) \simeq \Sigma|K_J| \wedge \Sigma |K_L|.$$

\

\begin{thm}\label{ringstar} The product
	
	\
	
	$$ \Big[n_{\alpha} \otimes a_1 \otimes \cdots \otimes a_m \Big]
\cup \Big[n_{\gamma} \otimes g_1 \otimes \cdots \otimes g_m \Big] \in \hz$$is given by the $\ast-$product of $n_{\alpha}$ and $n_{\beta}$  composed with an inclusion map described in Lemma \ref{lem:diagfactor} and a coordinate wise product defined as follows:
	
\begin{enumerate} 	
\item  If $a_i, g_i \in H^*(X_i)$ the product in the $i$-th coordinate is the product in  $H^*(X_i).$
\item If $a_i, g_i \in H^*(A_i) $ the product is induced by the product in $H^*(A_i)$.

\item if $a_i \in E_i, g_i \in C_i$ or $g_i \in E_i, a_i \in C_i$
the product is zero.

\end{enumerate}

\end{thm}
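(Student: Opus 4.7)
The plan is to reduce the computation to two already-understood product structures: the coordinate-wise cup product on $H^*(X^{[m]})$ coming from the inclusion $\zk\hookrightarrow X^{[m]}$, and the $\ast$-product on the smash polyhedral products $\widehat{Z}(K_I;\xa)$ developed in \cite{bbcg3}. The starting point is the identification in Theorem \ref{Ps} of $H^*(\zk)$ as the direct sum over $I\subset [m]$ and $\sigma\in K$ with $\sigma\subset I$ of
$$E^{[m]-I}\otimes \widetilde{H}^*(\Sigma|N(I,\sigma)|)\otimes Y^{I,\sigma}.$$
Via the wedge lemma (Theorem \ref{thm:null.A}), each summand is realized as (a module retract of) the cohomology of $\widehat{Z}(N(I,\sigma);(\underline{CV},\underline{V}))$ tensored with the $Y^{I,\sigma}$ factor coming from cohomology of $X^I$. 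In particular a class $\alpha = n_\alpha\otimes a_1\otimes\cdots\otimes a_m$ is the image, under a naturality map, of the exterior product of $n_\alpha$ (with respect to the appropriate sub-polyhedral-product whose cohomology is $\widetilde{H}^*(\Sigma|N(I_1,\sigma_1)|)$) with the tensor monomial $a_1\otimes\cdots\otimes a_m$ living in $H^*(X^{I_1})\otimes H^*(A^{[m]-I_1})$.

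Given this, I would compute $\alpha\cup\gamma$ by pulling back through the reduced diagonal
$$\widehat{\Delta}:\zk \to \zk\wedge \zk$$
and using the commutative square from \cite{bbcg3} relating $\widehat{\Delta}$ to the partial diagonals $\Delta^{J,L}_I$. The factorization splits into two commuting pieces: the product of the two tensor monomials in $H^*(X_1)\otimes\cdots\otimes H^*(X_m)$ (or its quotient indicated by Corollary \ref{srstar}), which yields the coordinate-wise rules (1) and (2) immediately by multiplicativity of the K\"unneth formula applied to $Y^{I_1,\sigma_1}\otimes Y^{I_2,\sigma_2}\to Y^{I_3,\sigma_3}$; and the product of the two ``topological'' factors $n_\alpha$ and $n_\gamma$, which is precisely the $\ast$-product of equation \eqref{starproductI} together with a naturality map of the kind discussed in Remark \ref{rem:naturality} (this is the inclusion referenced as Lemma \ref{lem:diagfactor}): the subcomplex $N(I_3,\sigma_3)$ sits inside $N(I_j,\sigma_j)$ for $j=1,2$ in the way dictated by Definition \ref{def:N(Isigma)}, so the partial diagonal formula in \eqref{equa:starD1} applies.

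The vanishing rule (3), $E_i\cdot C_i=0$, is the step that really uses the \emph{strong} freeness of Definition \ref{def:bas}. If $a_i\in E_i$ and $g_i\in C_i$ then, by definition, $i\notin I_1$ but $i\in\sigma_2\subset I_2$. The cup product on $H^*(\zk)$ is compatible with the inclusion $\zk\subset X^{[m]}$ only on the $X$-side, so in order to evaluate $\alpha\cup\gamma$ one must pull both factors to cohomology of the same $D(\tau)$; the only simplices $\tau$ that see both the $E_i$ component of $\alpha$ and the $C_i$ component of $\gamma$ are those with $i\notin\tau$, i.e.\ those with $Y_i=A_i$. By the very definition of $C_i$, the map $\iota^*:h^*(X_i)\to h^*(A_i)$ kills $C_i$. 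The coordinate-wise product in that position is therefore forced to be zero, and the same vanishing propagates through the spectral sequence identifications.

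The main obstacle, as I see it, is not the vanishing or the coordinate-wise rule, but bookkeeping: proving that the $\ast$-product formula behaves correctly with respect to the identification $\widehat{Z}(N(I,\sigma);(\underline{CV},\underline{V}))\simeq \Sigma|N(I,\sigma)|\wedge\widehat{V}^{[m]-I}$, and in particular that the inclusion $N(I_3,\sigma_3)\subset N(I_j,\sigma_j)$ induced by the partial diagonals agrees with the naturality inclusion described explicitly in Remark \ref{rem:naturality}. This will require carefully tracking how the projections $\Pi$ and partial diagonals $\Delta^{J,L}_I$ interact with the strong freeness decomposition, essentially reducing to the $(D^1,S^0)$ case via the coefficient change from $(\underline{CV},\underline{V})$. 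Once this compatibility is established, the formula for $\alpha\cup\gamma$ in the statement is the only possibility consistent with the three ingredients above.
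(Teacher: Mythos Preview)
Your overall strategy matches the paper's: both reduce the cup product to the $\ast$-product of \cite{bbcg3} on the ``link'' factors together with a coordinate-wise product on the $E_i,B_i,C_i$ factors, and both recognize that the real work is the bookkeeping you flag at the end. Two points of difference are worth noting.

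First, the paper's organizing device is a factorization you do not make explicit. The (partial) diagonal is written as a composite
\[
\widehat{Z}(K_I;\xa_I)\xrightarrow{\ \widehat{\Delta}^{J,L}_I\ }\widehat{Z}(K_I;\xa^{J,L}_I)\xrightarrow{\ Sh\ }\widehat{Z}(K_J;\xa_J)\wedge\widehat{Z}(K_L;\xa_L),
\]
where $\widehat{\Delta}^{J,L}_I$ is induced by maps of pairs (identity or reduced diagonal in each coordinate) and $Sh$ is a pure shuffle. Because $\widehat{\Delta}^{J,L}_I$ is a map of pairs it induces a map of spectral sequences, so its effect is computed by applying the strong freeness decomposition to the pair $(X_i\wedge X_i,A_i\wedge A_i)$ and invoking the naturality discussion you cite from Remark~\ref{rem:naturality}; this is exactly Lemma~\ref{lem:diagfactor}. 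The shuffle $Sh^*$ is then computed separately, and after introducing the auxiliary variables $t_i,s_i$ it is shown to coincide with the $\ast$-product for $(D^1,S^0)$ on the link factors. Your ``two commuting pieces'' are morally this, but without isolating the intermediate object $\widehat{Z}(K_I;\xa^{J,L}_I)$ the compatibility you worry about at the end is harder to nail down.

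Second, your argument for rule~(3) is different from the paper's and, as written, is not quite rigorous. You argue by restriction to $D(\tau)$, but the cup product is not determined by its restrictions to the individual $D(\tau)$, and classes with $E_i$-factors do not lift to $H^*(X^{[m]})$. The paper's argument is purely algebraic: one computes the strong freeness decomposition of $(X_i\wedge X_i,A_i\wedge A_i)$ and finds
\[
\widehat{B}_i=B_i\otimes B_i,\quad \widehat{C}_i=(C_i\otimes C_i)\oplus(B_i\otimes C_i)\oplus(C_i\otimes B_i),\quad \widehat{E}_i=(E_i\otimes E_i)\oplus(E_i\otimes B_i)\oplus(B_i\otimes E_i).
\]
The summand $E_i\otimes C_i$ (and $C_i\otimes E_i$) simply does not occur in $\widehat{E}_i\oplus\widehat{B}_i\oplus\widehat{C}_i$, hence does not occur in $H^*(\widehat{Z}(K_I;\xa^{J,L}_I))$, so $Sh^*$ annihilates any monomial containing such a factor. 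This gives (3) with no further analysis.
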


The rest of this section is devoted to proving Theorem \ref{ringstar}.

\

A description of   the diagonal map $$\zk \to \zk \times \zk$$and the partial diagonal maps
$$\widehat{Z}(K_I; \xa_I) \to \widehat{Z}(K_J;\xa_J)\wedge \widehat{Z}(K_L; \xa_L)$$ are now given.    The description uses the following lemma  where the notation 

\

$$ [ \xa^{J.L}_I ]_i =\begin{cases}
(X_i,A_i) & \mbox{ if } i \in I -J \cap L\\
(X_i \wedge X_i, A_i \wedge A_i) & \mbox{ if } i \in J \cap L
\end{cases}
$$
is used in part (c).

\begin{lem}\label{Zdiagonal}
\begin{enumerate}
\item[(a)] Suppose $A_m\subset X_m,  B_m \subset Y_m$  then there is a natural map  $$Sh: Z(K; (\underline{X  \times Y}, \underline{ A \times B})) \to Z(K; (\underline{X},\underline{A})) \times Z(K; (\underline{Y},\underline{B}))$$  where $$ (\underline{X  \times Y}, \underline{ A \times B})) = \{(X_m \times Y_m, A_m \times B_m)\}.$$

\

\item[(b)]The diagonal maps of pairs, $ (X_m,A_m) \to (X_m \times X_m, A_m \times A_m)$ defines a map \newline $$\widetilde{\Delta}: (\underline{X},\underline{A}) \to ( \underline{X \times X}, \underline{A \times A})$$ which induces a map of spectral sequences.

$$E_r(Z(K; (\underline{X \times X}, \underline{A \times A}))) \to E_r(Z(K; \xa))$$

The diagonal map  is the composite $$\Delta: \zk \overset{\widetilde{\Delta}}{\to} Z(K; (\underline{X \times X}, \underline{A \times A}) ) \overset{Sh}{\to}  \zk \times \zk.$$

\

\item[(c)] The partial diagonal map of pairs $\widehat{\Delta}^{J,L}_I: \xa \to \xa^{J.L}_I $

$$ \widehat{\Delta}^{J,L}_I = \begin{cases}
\mbox{ the identity } & \mbox{ if } i \in I - J \cap L\\
\mbox{ the reduced diagonal } & \mbox{ if } i \in J \cap L.
\end{cases}$$ induces a map of spectral sequences.
$$E_r(\widehat{Z}(K_I; \xa^{J,L}_I)) \to E_r(\widehat{Z}(K_I; \xa_I))$$

$\Delta^{J,L}_I$  is the composite
$$ \widehat{Z}(K_I; \xa_I) \overset{ \widehat{\Delta}^{J,L}_I }{\to} \widehat{Z}(K_I; \xa^{J,L}_I) \overset{Sh}{\to} \widehat{Z}(K_J; \xa_J) \wedge \widehat{Z}(K_L; \xa_L).$$

\end{enumerate}
\end{lem}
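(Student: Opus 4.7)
The plan is to check all three parts by working one simplex at a time on the defining diagrams $D(\sigma;\cdot)$ and $\widehat D(\sigma;\cdot)$, then passing to the colimit; naturality of the spectral sequence under maps of pairs (established in Theorem \ref{thm:spectral sequence}) will take care of the spectral-sequence statements once the relevant maps of pairs are constructed.

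For part (a), I would first define $Sh$ on each slice $D(\sigma;(\underline{X\times Y},\underline{A\times B}))$ by the canonical coordinate shuffle
\[
\prod_{i=1}^m Z_i \;\longrightarrow\; \prod_{i=1}^m Y^{(1)}_i \;\times\; \prod_{i=1}^m Y^{(2)}_i ,
\]
where $Z_i$ is $X_i\times Y_i$ or $A_i\times B_i$ according to whether $i\in\sigma$, and $Y^{(1)}_i,Y^{(2)}_i$ are the corresponding factors of $D(\sigma;(\underline{X},\underline{A}))$ and $D(\sigma;(\underline{Y},\underline{B}))$. On each factor this is simply the canonical homeomorphism $(X_i\times Y_i)\cong X_i\times Y_i$, and likewise for the $A_i\times B_i$ factors; it is therefore a homeomorphism. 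I would then verify that these slice-wise shuffles commute with the face inclusions $d_{\sigma,\tau}$ (which are themselves coordinate-wise inclusions $A_i\hookrightarrow X_i$ and $B_i\hookrightarrow Y_i$), so that $Sh$ is induced on the colimit. Functoriality of the polyhedral product in $\mathcal{CW}_*$-pairs makes this automatic once checked at the level of a single $\sigma$.

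For part (b), the pair-map $(X_i,A_i)\to(X_i\times X_i,A_i\times A_i)$ is well-defined because the ordinary diagonal $\Delta_{X_i}$ sends $A_i$ into $A_i\times A_i$. Applying the functor $Z(K;-)$ coordinate-wise yields the map $\widetilde\Delta$; by the naturality clause of Theorem \ref{thm:spectral sequence} (for morphisms of pointed CW pair-systems $\xa\to(\underline{Y},\underline{B})$ with the same underlying simplicial complex), this produces the asserted morphism of spectral sequences. The identification $\Delta = Sh\circ\widetilde\Delta$ is a check on a single slice: the composite sends $(y_1,\dots,y_m)$ first to $((y_1,y_1),\dots,(y_m,y_m))$ and then, via the shuffle, to $((y_1,\dots,y_m),(y_1,\dots,y_m))$, which is the diagonal of $D(\sigma;\xa)$. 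Passing to the colimit over $K$ gives $\Delta$.

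For part (c), the argument is parallel but uses smash polyhedral products and restricts to full subcomplexes $K_I$. The coordinate-wise map $\widehat{\Delta}^{J,L}_I$ is the identity on coordinates $i\in I\setminus(J\cap L)$ and the reduced diagonal $(X_i,A_i)\to(X_i\wedge X_i,A_i\wedge A_i)$ on $i\in J\cap L$; in both cases one has a well-defined map of based CW pairs, so another application of the naturality clause in Theorem \ref{thm:spectral sequence} (this time for the smash polyhedral product spectral sequence) yields the claimed morphism of spectral sequences. The shuffle $Sh$ is defined exactly as in (a), but with smash products replacing products; on a single $\widehat D(\sigma;\xa^{J,L}_I)$ it rearranges a smash product of factors $X_i\wedge X_i$ (or $A_i\wedge A_i$) into $\widehat Y^J\wedge\widehat Y^L$ in the evident way. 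One then verifies that the composite $Sh\circ\widehat{\Delta}^{J,L}_I$ factors through the colimit and agrees with $\Delta^{J,L}_I$ on each $\widehat D(\sigma;\xa_I)$, which is essentially the definition of the partial diagonal recalled just above the lemma.

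The mildly delicate step is the compatibility of $Sh$ with the colimit in parts (a) and (c): one must check that the slice-wise shuffle intertwines the colimit structure maps on both sides, which reduces to commutativity of some coordinate-swap squares at the level of a single $i$. For parts (b) and (c) the promotion from a map of polyhedral (smash) products to a map of spectral sequences is purely formal once one invokes the naturality already proven in Theorem \ref{thm:spectral sequence}, since $\widetilde\Delta$ and $\widehat{\Delta}^{J,L}_I$ are, by construction, morphisms of pointed CW pair-systems indexed by the same vertex set.
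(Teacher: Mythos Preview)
Your proposal is correct and follows essentially the same approach as the paper: define the shuffle slice-by-slice on each $D(\sigma)$, check compatibility with the colimit structure maps, and then invoke naturality of the spectral sequence in the pair variable (Theorem \ref{thm:spectral sequence}) for parts (b) and (c). The paper's proof is terser---it dispatches (c) with ``Is similar''---but the substance is identical to what you have written.
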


\begin{proof}   (a):  From the definition of the polyhedral product functor, Definition \ref{defin:gmac}, \newline $Z(K; (\underline{X  \times Y}, \underline{ A \times B}))$  is a colimit of spaces,  $D(\sigma,(\underline{X  \times Y}, \underline{ A \times B}) )$.  By shuffling the factors,  there is a  map  $D(\sigma,(\underline{X  \times Y}, \underline{ A \times B}) )\to D(\sigma, (\underline{X}, \underline{A})) \times D(\sigma, (\underline{Y}, \underline{B}))$.  The maps are compatible with the maps into  the spaces of the  colimit defining  $Z(K; (\underline{X},\underline{A})) \times Z(K; (\underline{Y},\underline{B}))$,  proving (a).

 (b): The diagonal
 $$\Delta: \zk \to  \zk \times \zk$$
 is  induced at the level of $D(\sigma)$ by $\widetilde{\Delta}$ followed by a shuffle.  It follows from (a) that $\Delta$ factors as indicated.

 (c):  Is similar.

  \end{proof}

\

Since the product in $H^*(\zk)$ is determined by the $\ast-$product, as in (\ref{starproductI}) it suffices to compute the map induced by $\Delta^{J,L}_I$.  It follows from Lemma \ref{Zdiagonal}  that $\Delta^{J,L}_I$  decomposes as the composition of the  map induced by the shuffle followed by the map induced by $\widehat{\Delta}^{J,L}_I $.  The shuffle map may be computed using the fact that the spectral sequence, and hence $H^*(\widehat{Z}(K; \xa)$ is a functor of  the strong $H^*-$cohomology type (Corollary \ref{cor:specSeqstrong isomorphism}) and the decomposition in
Definition \ref{def:bas}.  The map induced by $\widehat{\Delta}^{J,L}_I$  is computed using the naturally  discussed in remark \ref{rem:naturality}.

\

  We first describe  $(\widehat{\Delta}^{J,L}_I)^*$. To this end the decomposition of Definition \ref{def:bas} is described for the pair $(\underline{X \wedge X}, \underline{A \wedge A})$.
 
 \

 There is the long exact sequence
 $$\overset{\delta}{\leftarrow} \widetilde{H}^*(A_i \wedge A_i) \overset{i}{\leftarrow} \widetilde{H}^*(X_i
 \wedge X_i) \leftarrow \widetilde{H}^*(X \wedge X/A \wedge A) \leftarrow $$
 The image, kernel and cokernel of Definition \ref{def:bas} for the above exact sequence will be denoted $\widehat{B}_i, \widehat{C}_i$ and $\widehat{E}_i$ respectively.  In terms of $B_i,C_i,E_i$ associated to the pair $(X_i,A_i)$

 \begin{equation}\label{hats}
 \begin{array}{lcl}
 \widehat{B}_i & =& B_i \otimes B_i\\
 \widehat{C}_i & =& \Big(C_i \otimes C_i\Big) \oplus \Big( B_i \otimes C_i\Big)  \oplus \Big(C_i \otimes B_i\Big)\\
 \widehat{E}_i &=& \Big(E_i \otimes E_i\Big) \oplus \Big(E_i \otimes B_i \Big)\oplus \Big(B_i \otimes E_i\Big)
 \end{array}
 \end{equation}

 The diagonal induces the product in $H^*(X_i)$ and $H^*(A_i)$

 \
 
  \begin{equation}\label{productmap}
  \begin{array}{lcl}
 \widehat{B}_i & \to &H^*(X_i) = B_i \oplus C_i\\
 \widehat{C}_i & \to & C_i \\
 \widehat{E}_i &\to & H^*(A_i) = E_i \oplus B_i
 \end{array}
 \end{equation}

 The map of long exact sequences  induced by the diagonal
 
 \begin{equation}
 \begin{array}{ccccccccccc}
 \cdots \overset{\delta}{\leftarrow}&h^*(A_i) &\leftarrow&h^*(X_i)&\leftarrow & \widetilde{h}^*(X_i/A_i) & \leftarrow \cdots\\
 &\uparrow{\Delta_i^*} && \uparrow{\Delta^*_i} && \uparrow\\
 \cdots \overset{\delta}{\leftarrow} &h^*(A_i \wedge A_i)&\leftarrow&h^*(X_i\wedge X_i) &\leftarrow & \widetilde{h}^*(X_i\wedge X_i/A_i \wedge A_i) &\leftarrow \cdots
 \end{array}
 \end{equation}

  is an example of the more general situation described at the end of Section \ref {revisited}.
  
  \
  
  The map	

  \

   $$\widehat{\Delta}^{J,L}_I: \widehat{Z}(K_I; \xa_I) \to \widehat{Z}(K_I; \xa^{J,L}_I)$$is induced by a map of pairs $\xa_I \to  \xa^{J,L}_I$  which at  $E_{\infty}$    is   the product described in (\ref{productmap}).

\

The details follow.  We first have to adjust the indexing sets in Theorem \ref{Ps}.  The vertex set of the  simplicial complex $K_I$ is $I$ not $[m]$.  Since $I$ now denotes  the vertex set we cannot use it in the notation for the factor $Y^{I,\sigma}$.  We replace   $I$ in Theorem \ref{Ps}  with $F$.  With these modifications  

\

$$ H^*(\widehat{Z}(K_I; \xa^{J,L}_I)) $$is a sum of groups
$$\underset{F \subset I, \sigma \in K_I}{\bigoplus} \overline{E}^{I-F} \otimes \widetilde{H}^*(\Sigma |N(F,\sigma)|) \otimes \overline{Y}^{F,\sigma}$$

\

\noindent where a factor $\overline{E}_i$ of $\overline{E}^{I-F}  $ is $\widehat{E}_i$ if $i \in J \cap L$ and  $E_i$ otherwise.  The factors of $ \overline{Y}^{F,\sigma}$ are defined analogously.  Specifically a factor $\overline{C}_i$ of $\overline{Y}^{F,\sigma}$ is $\widehat{C}_i$ if $i \in J \cap L$ and $C_i$ otherwise.  A factor  $\overline{B}_i$ of $\overline{Y}^{F,\sigma}$ is $\widehat{B}_i$ if $i \in J \cap L$ and is $B_i$ otherwise.

\

Similarly 
$$ H^*(\widehat{Z}(K_I; \xa_I )) $$
 is a sum
 
 \

$$ \underset{F^{\prime} \subset I , \tau \in K_I}{\bigoplus} E^{I-F^{\prime}} \otimes \widetilde{H}^*(\Sigma|N( F^{\prime} ,\tau)|) \otimes Y^{F^{\prime},\tau}.$$

\

The map   $$\widehat{\Delta}^{J,L}_I: \widehat{Z}(K_I; \xa_I) \to \widehat{Z}(K_I; \xa^{J,L}_I)$$

\

 induces a map in cohomology which 
 restricted to each summand is  a map
 
 \
 
$$\overline{E}^{I-F} \otimes \widetilde{H}^*(\Sigma |N(F,\sigma)|) \otimes \overline{Y}^{F,\sigma} \to \underset{F^{\prime} \subset I, \tau \in K_I}{\bigoplus} E^{I-F^{\prime}} \otimes \widetilde{H}^*(\Sigma|N(F^{\prime},\tau)|) \otimes Y^{F^{\prime},\tau}.    $$

 \
 
 This map is computed using the naturality discussion at the end of Section  \ref{revisited}  with maps on $\widehat{E}_i, \widehat{B}_i,$  and  $\widehat{C}_i$ given by (\ref{productmap}).  Specifically the map induced by the product on   $\widehat{E}_i$ may have summands in    $B_i$ thus enlarging   $F$ to a larger indexing set, $F^{\prime}$.   Also some of the terms  $\widehat{B}_i$ which appear in $\overline{Y}^{F,\sigma}$  map via the product to summands with a factor of $C_i$ enlarging the simplex $\sigma$ to $\tau$.   Thus we have proven lemma \ref{lem:diagfactor} below. The sum is over all  $F^{\prime} \subset I$ and $\tau$.  

 \begin{lem}\label{lem:diagfactor}
$$(\widehat{\Delta}^{J,L}_I)^*: \overline{E}^{I-F} \otimes \widetilde{H}^*(\Sigma |N(F,\sigma)|) \otimes \overline{Y}^{F,\sigma} \to  \underset{F^{\prime},\tau}{\bigoplus} E^{I-F^{\prime} } \otimes \widetilde{H}^*(\Sigma|N(F^{\prime},\tau)|) \otimes Y^{F^{\prime},\tau}, $$
where $ F^{\prime} \supset F, \tau \supset  \sigma$, the  product,  (\ref{productmap}), induces the maps on the factors in $\overline{E}$ and $\overline{Y}$ and \newline
 $ \widetilde{H}^*(\Sigma |N(F,\sigma)|) \to  \widetilde{H}^*(\Sigma |N(F^{\prime},\tau)|)$ is induced by the inclusions $$  N(F^{\prime},\tau) \to  N(F,\sigma).$$
\hspace {6  in}   $ \square$

\end{lem}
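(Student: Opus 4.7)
The plan is to reduce the lemma to two separate pieces of already-developed machinery: the naturality discussion for maps of pairs given in Remark \ref{rem:naturality}, and the explicit decomposition of $(\underline{X\wedge X},\underline{A\wedge A})$ into $\widehat{E}_i,\widehat{B}_i,\widehat{C}_i$ recorded in \eqref{hats} together with the formulas in \eqref{productmap} for what the diagonal does on each summand. First I would observe that for $i\notin J\cap L$ the map of pairs $[\xa^{J,L}_I]_i\to(X_i,A_i)$ is the identity, so on the corresponding tensor factors the diagonal induces the identity and contributes nothing. The work is therefore concentrated on the coordinates $i\in J\cap L$, where the diagonal of pairs is the reduced diagonal $(X_i,A_i)\to(X_i\wedge X_i,A_i\wedge A_i)$, inducing on cohomology precisely the cup product maps in \eqref{productmap}.

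Next I would apply Remark \ref{rem:naturality} directly to this map of pairs. Fix a class
\[
\alpha = n\otimes\bigotimes_{j\in I-F}\overline{e}_j \otimes\bigotimes_{s\in\sigma}\overline{c}_s\otimes\bigotimes_{\ell\in F-\sigma}\overline{b}_\ell
\]
in $\overline{E}^{I-F}\otimes\widetilde{H}^*(\Sigma|N(F,\sigma)|)\otimes\overline{Y}^{F,\sigma}$. For $i\in J\cap L$ the formulas \eqref{productmap} let me write $\overline{e}_i = e_i+b_i^\flat$ with $e_i\in E_i$ and $b_i^\flat\in B_i$, and $\overline{b}_i = b_i+c_i^\flat$ with $b_i\in B_i$ and $c_i^\flat\in C_i$, while $\overline{c}_i\mapsto c_i\in C_i$. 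Expanding the tensor product formally, each resulting monomial has a well-defined \emph{type}: let $Q\subseteq I-F$ collect those indices where the $B_i$-summand $b_i^\flat$ was chosen, and $P\subseteq F-\sigma$ collect those where $c_i^\flat$ was chosen. Setting $F' = F\cup Q$ and $\tau = \sigma\cup P$, the monomial lies in the summand
\[
E^{I-F'}\otimes\widetilde{H}^*(\Sigma|N(F',\tau)|)\otimes Y^{F',\tau}
\]
provided $\tau\in K$ (otherwise it lies in the zero group, which is consistent with the sum being taken over genuine simplices).

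To finish, I need to identify what happens to the middle factor $n\in\widetilde{H}^*(\Sigma|N(F,\sigma)|)$. Here I use the naturality argument from Remark \ref{rem:naturality}: introducing a new $B$-factor at position $q\in Q$ corresponds to a jump in filtration in the spectral sequence which is represented, under the isomorphism of Theorem \ref{Ps}, by the restriction $\iota^*$ along the inclusion of sub-simplicial-complexes, and similarly for the $C$-factors at positions $p\in P$. Thus I need to verify the set-theoretic inclusion $N(F',\tau)\subseteq N(F,\sigma)$: if $\rho\in N(F',\tau)$ then $|\rho|\subseteq [m]-F'\subseteq [m]-F$ and $\rho\cup\tau\in K$, which forces $\rho\cup\sigma\in K$ since $\sigma\subseteq\tau$, so $\rho\in N(F,\sigma)$. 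Combining the coordinate-wise action via \eqref{productmap} on the $\overline{E}$ and $\overline{Y}$ factors with the inclusion-induced map on the cohomology of the geometric realization yields exactly the stated formula.

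The main obstacle I anticipate is bookkeeping: one must verify that the two independent sources of ``jumping'' (the $B$-contributions enlarging $F$ and the $C$-contributions enlarging $\sigma$) combine correctly and that the representative chosen in the filtration is compatible with the $\iota^*$-induced map on $\widetilde{H}^*(\Sigma|N|)$. This amounts to unwinding the pushout diagram from Lemma \ref{lem:associated graded} once more in the presence of the diagonal, but the content is entirely contained in Remark \ref{rem:naturality}, so no new spectral-sequence argument is required.
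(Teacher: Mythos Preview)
Your proposal is correct and follows essentially the same route as the paper: the paper also derives the lemma by applying the naturality discussion of Remark~\ref{rem:naturality} to the diagonal map of pairs, using the decomposition \eqref{hats} and the product formulas \eqref{productmap} on the coordinates in $J\cap L$, and identifying the enlargement $F\subset F'$ (from $\widehat{E}_i\to B_i$ contributions) and $\sigma\subset\tau$ (from $\widehat{B}_i\to C_i$ contributions). Your explicit verification that $N(F',\tau)\subseteq N(F,\sigma)$ is the same check carried out in Remark~\ref{rem:naturality}, so nothing is missing.
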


\

Intuitively the diagonal induces the product on the coordinates.  Because of the mixing of $E's, B's$ and $C's$ the cohomology of the links map to the cohomology of the resulting sub-links .  

\

Note that (\ref{hats}) implies  neither $E_i \otimes C_i$ nor $C_i \otimes E_i$ appear in $H^*(\widehat{Z}(K_I; \xa^{J,L}_I))$.  This implies the product of classes in $\zk $  involving $C_i$ and $E_i$ must be zero.

\

An important special case is that of wedge decomposable spaces.

\

\begin{defin}
	A collection of spaces, $\xa$ is wedge decomposable  if for all $i$ 
	$$X_i = B_i \vee C_i$$ and
	$$ A_i = B_i \vee E_i$$ where $$E_i \to B_i \vee C_i$$ is null homotopic.
	
	\end{defin}
	
	In this case there is none of the mixing of the products complicating the map in Lemma \ref{lem:diagfactor}.

\

\begin{cor}
	If $\xa$ is wedge decomposable then the product of
	
		$$ \Big[n_{\alpha} \otimes a_1 \otimes \cdots \otimes a_m \Big]
		\cup \Big[n_{\gamma} \otimes g_1 \otimes \cdots \otimes g_m \Big] \in \hz$$is given by the $\ast-$product of $n_{\alpha}$ and $n_{\beta}$  and a coordinate wise product defined as follows:
		
		\begin{enumerate} 	
			\item  If $a_i, g_i \in H^*(B_i)$ the product in the $i$-th coordinate is the product in  $H^*(B_i).$
			\item If $a_i, g_i \in H^*(C_i) $ the product is induced by the product in $H^*(C_i)$. 
			\item If $a_i, g_i \in H^*(E_i) $ the product is induced by the product in $H^*(E_i)$.
			\item The product is zero otherwise.
			\end{enumerate}
\end{cor}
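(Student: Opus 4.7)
The plan is to derive the corollary directly from Theorem~\ref{ringstar} by showing that, in the wedge decomposable case, the inclusion map appearing in Theorem~\ref{ringstar} (coming from Lemma~\ref{lem:diagfactor}) collapses to the identity and the coordinate-wise products on $H^*(X_i)$ and $H^*(A_i)$ split along the wedge decomposition. I would begin by recalling that for a wedge of based CW complexes one has $\widetilde{H}^*(B_i)\cdot\widetilde{H}^*(C_i)=0$ inside $H^*(B_i\vee C_i)=H^*(X_i)$, and similarly $\widetilde{H}^*(B_i)\cdot\widetilde{H}^*(E_i)=0$ inside $H^*(A_i)$. Under the identification of the summands $B_i,C_i,E_i$ of Definition~\ref{def:bas} with the cohomology of the wedge factors, this gives immediately that the coordinate-wise product maps satisfy:
\begin{itemize}
\item $B_i\cdot B_i\subseteq B_i$, $C_i\cdot C_i\subseteq C_i$, $E_i\cdot E_i\subseteq E_i$, and
\item $B_i\cdot C_i=0$ in $H^*(X_i)$, $B_i\cdot E_i=0$ in $H^*(A_i)$, and $E_i\cdot C_i=0$ already by part~(3) of Theorem~\ref{ringstar}.
\end{itemize}
These four vanishings exhaust the ``mixed'' cases, so the only nonzero coordinate-wise products are the three pure ones listed in the corollary.

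Next I would run this observation through the general product formula. The product in $H^*(\zk)$ is computed, via Theorem~\ref{ringstar}, as the composite of the partial diagonal $(\widehat{\Delta}^{J,L}_I)^*$ with the shuffle (which is innocuous) and the $\ast$-product on the link factors. By Lemma~\ref{lem:diagfactor}, the map $(\widehat{\Delta}^{J,L}_I)^*$ is assembled from the product maps $\widehat{E}_i\to E_i\oplus B_i$, $\widehat{B}_i\to B_i\oplus C_i$, $\widehat{C}_i\to C_i$ of~\eqref{productmap}. Using the splittings~\eqref{hats}, the wedge vanishings above imply that the $E_i\otimes B_i$ and $B_i\otimes E_i$ pieces of $\widehat{E}_i$, and the $B_i\otimes C_i$ and $C_i\otimes B_i$ pieces of $\widehat{C}_i$, all map to zero; moreover $B_i\otimes B_i\to B_i\subset B_i\oplus C_i$ has no $C_i$-component, and $E_i\otimes E_i\to E_i\subset E_i\oplus B_i$ has no $B_i$-component. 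In the language of Lemma~\ref{lem:diagfactor}, this forces $F'=F$ and $\tau=\sigma$ in every nonzero summand; consequently the induced map on link cohomology $\widetilde{H}^*(\Sigma|N(F,\sigma)|)\to\widetilde{H}^*(\Sigma|N(F',\tau)|)$ is just the identity, and no extra inclusion correction is required.

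Finally, with the mixing eliminated, I would read off the product formula: the link factors pair via the $\ast$-product of~\eqref{equa:starD1} (as produced by Theorem~\ref{ringstar}), and on each coordinate the product is the pure cup product within $H^*(B_i)$, $H^*(C_i)$ or $H^*(E_i)$, and vanishes otherwise. This is exactly the content of the four cases in the corollary.

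The main obstacle I anticipate is a clean bookkeeping step: one must be careful that the identification of the summands $B_i,C_i,E_i$ of Definition~\ref{def:bas} with the reduced cohomologies of the wedge summands of $X_i$ and $A_i$ is compatible with the inclusion $\iota_i\colon A_i\hookrightarrow X_i$ (so that the common summand $B_i$ really is the same in both), and that the vanishing of cross-terms in $H^*(\text{wedge})$ is applied to the correct diagonal map, rather than to some shuffled variant. Once that compatibility is nailed down at the level of Definition~\ref{def:bas}, the rest is a direct specialization of Theorem~\ref{ringstar} and Lemma~\ref{lem:diagfactor}.
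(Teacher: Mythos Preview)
Your proposal is correct and follows essentially the same approach as the paper. The paper offers no formal proof at all---just the single sentence ``In this case there is none of the mixing of the products complicating the map in Lemma~\ref{lem:diagfactor}'' immediately preceding the corollary---and your argument is a careful unpacking of precisely that remark: the wedge structure forces the cross-terms in \eqref{productmap} to vanish, so $F'=F$, $\tau=\sigma$, the link inclusion is the identity, and Theorem~\ref{ringstar} specializes directly.
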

\begin{comment}
\begin{remark}\label{rem:recapitulation}   We recapitulate the various cases that appear in Lemma \ref{lem:diagfactor}.

\

\begin{itemize}
\item If each $\widehat{E}_i, \widehat{C}_i, \widehat{B}_i$ maps to $E_i, C_i$  and $B_i$ then 
$(\widehat{\Delta}^{J,L}_I)^*$ is simply the product in each factor and the identity on $H^*(\Sigma|N(F,\sigma)|).$

\
\item If  a factor $\widehat{B}_i$ of $Y^{F,\sigma}$ maps to $C_i$ then the indexing set $F$ does not change.  However for the product to produce a non zero term  $\sigma \cup \{i\} = \sigma^{\prime}$ must be a simplex in $K$.  If this is the case  there may be  a multiplicative extension in the spectral sequence.

In fact all differentials and extensions are taking place in the spectral sequences of the links,  $H^*(Z(N;(D^1,S^0))$.

\

\item If a factor $\widehat{E}_i$ maps to $B_i$ then the indexing set $F$ is replaced by $F^{\prime}=F \cup \{i\}$ and $\sigma$ does not change.  There are two sub cases depending on the support of $\widehat{E}_i$  (see Remark \ref{remark:EandW}):

\
\begin{enumerate}
\item  $N(F,\sigma) = N(F^{\prime}, \sigma)$.  Then $\widehat{E}_i$ is supported on $H^*(A_i \wedge A_i)$.

\

\item $N(F, \sigma) \supset N(F^{\prime},\sigma)$.  Then $\widehat{E}_i$ is supported on $H^*(X_i \wedge X_i/A_i \wedge A_i)$.

\end{enumerate}

\end{itemize}

\end{remark}

\end{comment}

\begin{exm}\label{exm:productOnD1S0} We illustrate Lemma  \ref{lem:diagfactor} for $H^*(\widehat{Z}(K; (D^1,S^0)))$.

$E_i$ is generated by a zero dimensional class, $t_0$ for all $i$.  The product $E_i \otimes E_i \to E_i$ is an isomorphism ( $t_0 \otimes t_0 \mapsto t_0$).  In particular $F=F^{\prime}$.

\
In this case the pair  $(D^1 \wedge D^1, S^0 \wedge S^0) $ is homotopy equivalent to $(D^1,S^0)$ and
$\widehat{\Delta}^{J,L}_I$ is a homotopy equivalence.

As a consequece the $\ast-$product, (\ref{equa:starD1})

$$ H^*(\Sigma |K_J|) \otimes H^*(\Sigma |K_L|) \to H^*(\Sigma|K_I|)$$

\
is completely determined by the map induce by the shuffle
\end{exm}

$$Sh^*: \widetilde{H}^*(\widehat{Z}(K_J; (D^1,S^0)_J))  \otimes \widetilde{H}^*( \widehat{Z}(K_L; (D^1,S^0)_L)) \to \widetilde{H}^*( \widehat{Z}(K_I; (D^1,S^0)^{J,L}_I) ).$$

\

In order to describe the suffle map for general $\xa$ satisfying the freeness condition it is convenient to give yet another description of the spectral sequence.  Motivated by the proof of Theorem \ref{Ps} we  introduce variables, $t_i$ and $s_i$ into the $E_1$ term of the spectral sequence.  The degree of $t_i = 0$ and the degree of $s_i=1$. 

\

For $K$ a simplicial complex with vertex set $[m]$ recall the $E_1$ term is a sum of groups

$$ \widetilde{H}^*(X_i/A_i)^{\tau} \otimes \widetilde{H}^*(\widehat{A}_i)^{\tau^c}$$ ($\tau$ a simplex of $K$).
Which in turn  is a sum of groups

\

\begin{equation}\label{sumofGroups}C^{\sigma}\otimes W^{\sigma^{\prime}} \otimes E^P \otimes B^Q \end{equation}

\

Where $\sigma$ and $\sigma^{\prime }$ are simplices of  $K$ whose union is is a simplex $\tau \in K$  and $P \cup Q = \tau^c$.   In the notation of Theorem \ref{Ps} $,  Q \cup |\sigma| = I$, $P \cup |\sigma^{\prime}| = [m]-I$.  The filtration of this summand is the weight of the simplex $\tau$ in the left lexicographical ordering of the simplices.

\

Replace $E_i$ with $t_i E_i$ and $W_i$ with $s_i E_i$ in (\ref{sumofGroups}) and arrange the sum as follows

\

\begin{equation}\label{sumGroupsII}
E_1=\underset{I\cup J =[m], I \cap J =\emptyset, \sigma \in K_I}{\underset{I,J, \sigma}{\bigoplus} }\Big( \underset{P \cup |\sigma^{\prime}|=J, \sigma \cup \sigma^{\prime} \in K}{\underset{P, \sigma^{\prime}}{\bigoplus}} E^J \otimes Y^{I,\sigma} t^P s^{\sigma^{\prime} }\Big)
\end{equation}

\

Define a differential on $E_1$ by $\delta t_i = s_i$, $\delta s_i=0$.  The proof of Theorem \ref{Ps} implies the following proposition.

\

\begin{prop} \label{prop:e1withts}

\begin{enumerate}
\item The spectral sequence
$$E_r(\widehat{Z}(K;\xa) \Rightarrow H^*(\widehat{Z}(K; \xa))$$
is isomorphic to the spectral sequence with $E_1$ term
$$\underset{I\cup J =[m], I \cap J =\emptyset, \sigma \in K_I}{\underset{I,J, \sigma}{\bigoplus} }\Big( \underset{P \cup |\sigma^{\prime}|=J, \sigma \cup \sigma^{\prime} \in K}{\underset{P, \sigma^{\prime}}{\bigoplus}} E^J \otimes Y^{I,\sigma} t^P s^{\sigma^{\prime} }\Big)$$ and  differential $\delta t_i = s_i$.

\

\item Let $N(I,\sigma)$ be the simplicial complex defined in Definition \ref{def:N(Isigma)}.  Then
$$ \underset{P \cup |\sigma^{\prime}|=J, \sigma \cup \sigma^{\prime} \in K}{\underset{P, \sigma^{\prime}}{\bigoplus}} E^J \otimes Y^{I,\sigma} t^P s^{\sigma^{\prime} }$$
is isomorphic to
$$\Big(E^J \otimes Y^{I,\sigma}\Big) \otimes E_1(\widehat{Z}(N(I,\sigma) ; (D^1, S^0))$$as differential groups.

\end{enumerate}
\end{prop}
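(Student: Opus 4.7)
The statement is essentially a bookkeeping reformulation of the $E_1$-page together with the identification of $d_1$ provided by Theorem \ref{diff}. My plan is to unpack each tensor factor in $E_1$ via the strong freeness decomposition of Definition \ref{def:bas}, reorganize the resulting summands according to the positions of the $E_i, W_i, B_i, C_i$ pieces, and then recognize the resulting derivation rule as $\delta t_i = s_i$.

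For part (1), I would begin from the description
\begin{equation*}
E_1(\widehat{Z}(K; \xa)) = \bigoplus_{\tau \in K} \widetilde{H}^*((\underline{X}/\underline{A})^\tau) \otimes \widetilde{H}^*(\widehat{A}^{\tau^c})
\end{equation*}
given by Theorem \ref{thm:spectral sequence}, apply K\"unneth, and expand each factor $\widetilde{H}^*(X_i/A_i) = C_i \oplus W_i$ for $i \in \tau$ and each $\widetilde{H}^*(A_i) = E_i \oplus B_i$ for $i \in \tau^c$. A typical summand is then $C^\sigma \otimes W^{\sigma'} \otimes E^P \otimes B^Q$ with $\sigma \cup \sigma' = \tau$ and $P \cup Q = \tau^c$. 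Setting $I = \sigma \cup Q$ and $J = \sigma' \cup P$, using the coboundary isomorphism $\delta: E_i \to W_i$ from Definition \ref{def:bas} to identify each $W_i$ with $E_i$, and attaching formal markers $t_i$ (of degree $0$) for the $E_i$'s supported in $P$ and $s_i$ (of degree $1$) for the $W_i$'s supported in $\sigma'$, reproduces display (\ref{sumGroupsII}) verbatim. Theorem \ref{diff} then identifies $d_1$ with the Leibniz extension of the coboundaries $\delta_i$, which on the markers reads $\delta t_i = s_i$ and $\delta s_i = 0$, and which annihilates $B_i$ and $C_i$; in particular the factor $E^J \otimes Y^{I,\sigma}$ is constant under the differential.

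For part (2), I would fix $I$, $J = [m] - I$, and $\sigma \in K_I$, and factor the constant tensor $E^J \otimes Y^{I,\sigma}$ out of the inner sum. The residual sum runs over pairs $(P, \sigma')$ with $P \cup |\sigma'| = J$ and $\sigma \cup \sigma' \in K$; by Definition \ref{def:N(Isigma)}, the latter condition is precisely that $\sigma'$ is a simplex of $N(I, \sigma)$. Specializing part (1) to the pair $(D^1, S^0)$ on the vertex set $J$, where each $E_i$ and each $W_i$ is free of rank one and $B_i = C_i = 0$, identifies this residual differential module, with its $d_1$ given by $\delta t_i = s_i$, as exactly $E_1(\widehat{Z}(N(I,\sigma); (D^1, S^0)))$.

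The main obstacle is the careful bookkeeping in part (1): one must verify both that the support $\sigma \cup \sigma'$ of a monomial corresponds correctly to its left-lexicographical weight in the filtration, and that replacing a $t_i$ by $s_i$ (i.e., sending an $E_i$-generator through the coboundary isomorphism to the corresponding $W_i$-generator) agrees on the nose with the $d_1$ computed in Theorem \ref{diff}. Once the support and filtration data are aligned, both assertions become direct, and the second part simply reads off a tensor-factorization of the resulting differential bigraded module.
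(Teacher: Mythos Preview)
Your proposal is correct and follows essentially the same route as the paper. The paper simply remarks that ``the proof of Theorem \ref{Ps} implies the following proposition,'' and your outline is precisely a recapitulation of that proof: expand $E_1$ via K\"unneth and the strong freeness decomposition into summands $C^{\sigma}\otimes W^{\sigma'}\otimes E^{P}\otimes B^{Q}$, regroup by $(I,J,\sigma)$, invoke Theorem \ref{diff} to see that the differential is the Leibniz extension of $\delta_i$ (hence $\delta t_i=s_i$ on the markers, vanishing on $B_i,C_i$), and then for part (2) factor out $E^{J}\otimes Y^{I,\sigma}$ and recognize the residual as the $E_1$-page for $\widehat{Z}(N(I,\sigma);(D^1,S^0))$ by specializing the same analysis.
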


 It will be  convenient to write $N_J$ for $ |N(I,\sigma|)$. So the summands of  $H^*(\zkh)$ have the form
$$E^J \otimes H^*(\Sigma  |N_J|) \otimes Y^{I,\sigma}.$$

\

The shuffle map is induced by
$$\widehat{D}(\sigma_I) \to \widehat{D}(\sigma_J) \wedge \widehat{D}(\sigma_L)$$
where $\sigma_J=\sigma_I \cap J, \sigma_L = \sigma_I \cap L$.
This map induces the shuffle on the $E, C$ and $B$ terms and  induces the shuffle on the  $t_i,p_i$ terms.
Specifically the shuffle map induces a map

$$Sh^*:  \big(E^{J_1} \otimes Y^{I_1,\sigma_1} t^{P_1} s^{\sigma_1^{\prime} }\big) \otimes
\big(E^{J_2} \otimes Y^{I_2,\sigma_2} t^{P_2} s^{\sigma_2^{\prime} } \big)\to \bigoplus  \overline{E}^J \otimes \overline{Y}^{I,\sigma} t^P s^{\sigma^{\prime} }$$
with $P_1 \cup |\sigma_1^{\prime}| = J_1, P_2 \cup |\sigma_2^{\prime}| = J_2.$  $\overline{E}$ and $\overline{Y}$ are define in the paragraph before Lemma \ref{lem:diagfactor} and $P \cup |\sigma^{\prime}|=J$.

\

From (\ref{hats}) it follows that $Sh^*$ is zero if there is a coordinate, $i$ with $E_i$ a factor in    $E^{J_1} \otimes Y^{I_1,\sigma_1} t^{P_1} s^{\sigma_1^{\prime} }$  and a factor $C_i$ in $E^{J_2} \otimes Y^{I_2,\sigma_2} t^{P_2} s^{\sigma_2^{\prime} } $ (or visa-verse).  Hence there is no loss of generality to assume 

\begin{equation}\label{condition}
J_1 \cap |\sigma_2| = \emptyset = J_2 \cap |\sigma_1|\end{equation}

\

Next notice that  $Sh^*$ takes any coordinate with a factor of $E_i$ to $\overline{E}_i$, any  coordinate involving $C_i$ to $\overline{C}_i$ and coordinates with both factors in $B_i$ to $\overline{B}_i$.

\
Hence the image of $Sh^*$ takes values  in the summand

$$ \overline{E}^{J_1 \cup J_2} \otimes \overline{Y}^{I_1\cap I_2, \sigma_1 \cup \sigma_2}t^P s^{\sigma^{\prime}}$$
where $P \cup |\sigma^{\prime}|=J_1 \cup J_2$ and $\sigma^{\prime} \cup \sigma_1 \cup \sigma_2 $ is a simplex in $K$. In particular  $Sh^*$ is zero on factors  were  $\sigma_1 \cup \sigma_2$ is not a simplex in $K$.   It is a consequence of  (\ref{condition}) that $|\sigma_1 \cup \sigma_2| \subset I_1 \cap I_2$.

Next observe that the shuffle map on $t_i$ and $s_i$ is exactly the map induced by the shuffle map for
$$H^*(\Sigma |N_{J_1}|) \otimes H^*(\Sigma |N_{J_2}|) \to H^*(\Sigma |N_{J=J_1 \cup J_2}|)$$   Which by example \ref{exm:productOnD1S0},  is the $\ast-$product.  Using work of Cai \cite{LC} we will give a chain level formula for the $\ast-$product in section \ref{section7}.

\

 Thus $Sh^*$ is the shuffle map on the $E, B$ and $C$ factors and the $\ast$-product on the cohomology of the links.   Lemma \ref{lem:diagfactor} and the computation of $Sh^*$ prove Theorem \ref{ringstar}.

\

We have shown that the map induced by the shuffle only depends on the ring structure of the cohomology of the subcomplexes of $K$ and the strong homology type of $H^*(X_i)$ and $H^*(A_i)$.  The contributions of the  ring structure of $H^*(X_i)$ and $H^*(A_i)$ to $H^*(\zkh)$ appear in the diagonal map, $\widehat{\Delta}^{J,L}_I$.  So decomposing the partial diagonal into the diagonal composed with the shuffle separates the combinatorial contribution to the ring structure of $\zk$ from the cup product structure of the cohomology of $\xa$.

 \begin{remark}

 \begin{enumerate}

\item Q.~Zheng, \cite{zheng},  also describes a product in $\hz$.

  \
  
  \item
 Proposition  \ref{prop:e1withts} generalizes to a multiplicative cohomology theory, $h^*$, satisfying the flatness condition.

 \end{enumerate}

 \end{remark}

\vspace {0.5 in}

\section{{\bf  The Cohomology of $Z(K;(\underline{D^1}, \underline{S^0}))$}}\label{section7}

Assuming suitable freeness conditions the results of Section \ref{sec: products} determine the cohomology ring $H^*(\zk) $ in terms of the rings $H^*(X_i)$,  $H^*(A_i)$ and the star product on the cohomology of the links. The goal of this section is to complete this description by describing the star product.  

\

Recall that the product 

$$H^*(\zk) \otimes H^*(\zk) \to H^*(\zk)$$

\

 is the composite of the map induced by 
the shuffle 

\

$$Sh^*:H^*(\zk) \otimes H^*(\zk) \to H^*(Z(K;(\underline {X \times X}, \underline{A\times A})))$$

\

and the map induced by the diagonal
 \
  $$\Delta^*:  H^*(Z(K;(\underline {X \times X}, \underline{A\times A}))) \to H^*(\zk).$$

\

On the summands of Theorem \ref{Ps} it was shown in section \ref{sec: products} that the shuffle has the form

\begin{multline} Sh^*:  \big(E^{J_1} \otimes H^*(\Sigma|N_{J_1}|)) \otimes  Y^{I_1,\sigma_1}\big) \otimes
\big(E^{J_2} \otimes H^*(\Sigma |N_{J_2}|) \otimes Y^{I_2,\sigma_2}  \big)  \\  \to 
\overline{E}^{J_1 \cup J_2} \otimes H^*(\Sigma |N_{J_1\cup J_2}|) \otimes  \overline{Y}^{I_1 \cap I_2,\sigma_1 \cup \sigma_2} 
  \end{multline}

\

\noindent where the pairing 

\begin{equation} \label{shuffelLinks} H^*(\Sigma|N_{J_1}|)) \otimes H^*(\Sigma |N_{J_2}|) \to  H^*(\Sigma |N_{J_1\cup J_2}|)\end{equation} is induced by the $\ast-$product.  

\

In section \ref{sec: products} it was also shown that the diagonal has  the form
 
 \
 $$ \overline{E}^{J_1 \cup J_2} \otimes H^*(\Sigma |N_{J_1\cup J_2}|) \otimes  \overline{Y}^{I_1 \cap I_2,\sigma_1 \cup \sigma_2} \to E^{J^{\prime}} \otimes H^*(\Sigma |N_{J^{\prime}}|) \otimes Y^{F^{\prime}, \tau}$$ 
 
 \
 
 \noindent with the map on the link 
 \begin{equation}\label{diagLink}H^*(\Sigma |N_{J_1\cup J_2}|) \to  H^*(\Sigma |N_{J^{\prime}}|) \end{equation} induced by an inclusion $\iota: N_{J^{\prime}} \to N_{J_1 \cup J_2}$.
 
 \

 We describe the $\ast-$product and inclusion map on the links by  constructing a filtered chain complex for   $H^*(Z(K; (D^1,S^0)))$.   The spectral sequence associated to this filtered complex is the spectral sequence  $E_r(Z(K; (D^1,S^0)))$.

\

For $I \subset [m]$ recall that there are classes, $s_i$ of degree $1$ and $t_i$ of degree $0$ such that   the $E_1$ page of the spectral sequence for $\widehat{Z}(K_I;(D^1,S^0))$ is generated by
$$ y_{\sigma} =y_1\otimes \cdots  \otimes y_m$$
where $\sigma$ is a simplex in $K_I$ and 
$$y_i=
\begin{cases}

s_i & \mbox{ if } i \in \sigma \\
 t_i & \mbox{ if } i \notin \sigma \mbox{ and } i \in I\\
 1 & \mbox{ if } i \notin I.
\end{cases}$$
Where $1$  is the multiplicative unit.

\

Define a cochain complex,  $C_{K_I}$ freely generated over $\mathbb{Z}$   by $\{y_{\sigma}| \sigma \in K_I\} $ with differential
$$ d(y_{\sigma} )= \underset{\tau}{\Sigma}  \mbox{  } (-1)^{n(\tau)} y_{\tau}$$
where, for $\sigma$ an $n-$simplex, the sum is over all $n+1$ simplices  $\tau$ such that $\sigma \subset \tau \in K_I$.  The integer 
$n(\tau)$ is defined by the usual graded sign convention for a derivation.  i.e. there is a differential  $\delta$ acting on each factor by $\delta(t_i) = s_i$, $\delta(s_i)=\delta(1) =0$. $d$ is defined on $y_{\sigma}$ by extending $\delta $ to $y_{\sigma}$ by the graded  Leibniz rule.  

\begin{comment}
The sign is  $\delta $y_{\tau}$ is obtained from $y_{\sigma}$  by passing $\delta$ past $s$'s and $t$'s until it gets to the coordinate $i$ where $s_i \in \tau \smallsetminus \sigma$ .  A factor of $(-1)$ is added each time $\delta $ passes by an $s_i$.   

\end{comment}

Define $C_K$ by
\begin{equation} \label{def:chaincomplex} C_K = \underset{I\subset [m]}{\bigoplus} C_{K_I}.\end{equation}  The following is proven in \cite{LC}.

\begin{prop}\label{prop:chaincomplex}  There is an isomorphism of groups 

  $$H^*(C_K) = H^*(Z(K; (D^1,S^0)))$$
  
\end{prop}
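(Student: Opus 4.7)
The plan is to combine the stable wedge decomposition of $Z(K;(D^1,S^0))$ with a direct identification of each summand $C_{K_I}$ as a shifted augmented simplicial cochain complex of $K_I$. Because $D^1$ is contractible, Theorem \ref{thm:decompositions.for.general.moment.angle.complexes} combined with the wedge-lemma portion of Theorem \ref{thm:null.A} gives a natural isomorphism
\[
\widetilde{H}^*(Z(K;(D^1,S^0))) \;\cong\; \bigoplus_{\emptyset\neq I\subseteq[m]}\widetilde{H}^*(\Sigma|K_I|) \;\cong\; \bigoplus_{\emptyset\neq I\subseteq[m]}\widetilde{H}^{*-1}(|K_I|).
\]
Hence it suffices to match $H^*(C_{K_I})$ with the corresponding summand for each $I$, with $I=\emptyset$ handled separately as the multiplicative unit in $H^0$.

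Fix $I\neq\emptyset$. In each generator $y_\sigma$ of $C_{K_I}$, the positions $i\notin I$ always carry $1$, so they can be suppressed, leaving a free complex with one generator for each $\sigma\in K_I$; since $|s_i|=1$ and $|t_i|=0$, a $p$-simplex $\sigma$ contributes a generator in total degree $p+1$. I would define a map $\Phi_I\colon C_{K_I}\to C^{*-1}_{\mathrm{aug}}(K_I)$ by $\Phi_I(y_\sigma)=\sigma^*$, placing the empty simplex in shifted degree $0$ and vertices in degree $1$. The key verification is that $\Phi_I$ intertwines the two differentials: when $\tau=\sigma\cup\{v\}$ and $v$ occupies position $k$ in the sorted vertex list of $\tau$, the Leibniz rule for $\delta t_i=s_i$ forces $\delta$ to commute past the $k-1$ earlier $s$-factors before replacing $t_v$ by $s_v$, producing sign $(-1)^{k-1}$, which is precisely the standard simplicial coboundary incidence. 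Thus $\Phi_I$ is an isomorphism of cochain complexes, so $H^{p+1}(C_{K_I})=\widetilde{H}^p(|K_I|)=\widetilde{H}^{p+1}(\Sigma|K_I|)$. For $I=\emptyset$, the complex $C_{K_\emptyset}$ is $\mathbb{Z}$ concentrated in degree zero with zero differential, contributing the unit. Summing over all $I$ and comparing with the wedge decomposition yields the proposition.

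The main obstacle I expect is the sign bookkeeping and the uniform handling of the empty simplex across the three sides of the identification (the complex $C_{K_I}$, the augmented cochain complex of $K_I$, and the reduced suspension $\Sigma|K_I|$). As an independent cross-check, the left-lexicographical filtration makes $C_K$ a filtered cochain complex whose associated spectral sequence coincides with $E_r(Z(K;(D^1,S^0)))$ from Theorem \ref{thm:spectral sequence}, and Theorem \ref{Ps} then identifies its abutment as $H^*(Z(K;(D^1,S^0)))$, giving a second route to the same conclusion.
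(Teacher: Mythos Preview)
Your proposal is correct and follows essentially the same route as the paper: the paper's proof is a single line observing that $C_K$ is the dual of the standard chain complex for $\bigvee_I \Sigma|K_I|$ and then invoking Theorem \ref{thm:null.A}. You simply unpack this identification in more detail (the explicit map $\Phi_I$, the sign check, and the handling of $I=\emptyset$), and you additionally note the spectral-sequence cross-check, which the paper does not use in the proof itself.
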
  

\begin{proof} $C_K$ is the dual of the standard chain complex for $\underset{I}{\bigvee} \Sigma |K_I|$.  The proposition now follows from Theorem \ref{thm:null.A}.
\end{proof}

\

The left lexicographical ordering of the simplices of $K$ induce a filtration on  \newline $C_K= \underset{I}{\bigoplus} \mathbb{Z}\{y_{\sigma}| \sigma \in K_I\} .$  The spectral sequence associated to this filtration is easily seen to be the spectral sequence $E_r(Z(K;(D^1,S^0)))$.  

\

In \cite{LC} Cai gives a non-commutative product on the chain complex (\ref{def:chaincomplex}) which induces the cup product in $H^*(Z(K; (D^1,S^0)))$.  This product specializes to the $\ast-$product of Theorem \ref{ringstar}.

 Following \cite{LC} we define a non commutative product on $C_K$ by extending the following  product on the classes $t_i$ and $s_i$:
\begin{equation}\label{caiproduct}
t_i t_i = t_i,\quad
	t_i s_i = 0,\quad
	s_i t_i = s_i,\quad
	s_i s_i = 0
\end{equation}

\

\noindent to $C_K$ by the graded shuffle.

\

The product (\ref{caiproduct}) induces the  $\ast-$product, (\ref{shuffelLinks}), on the cochain complexes for $N_{J_1} $ and $N_{J_2}$ as follows.  We suppose there are simplices  $\alpha \in N_{J_1}$ and $\beta \in N_{J_2}$.  There are the generators $y_{\alpha}$ and $y_{\beta}$ of the cochains of $N_{J_1}$ and $N_{J_2}$ respectively.   The graded shuffle product of $y_{\alpha}$ and $y_{\beta}$ followed by the coordinate wise product defined in (\ref{caiproduct}) determine a signed monomial, $y_{\gamma}$ in the $t_i$'s and $s_i$'s.  Here   $\gamma = \{i|s_i \mbox{ appears in the } i-th \mbox{ coordinate of the monomial} \}$.  
The  $\ast-$product, (\ref{shuffelLinks}) on cochains  is given by 

$$y_{\alpha} \otimes y_{\beta} \mapsto \pm y_{\gamma} \mapsto \begin{cases}
\pm y_{\gamma} & \mbox{ if } \gamma \cup \sigma_1 \cup \sigma_2 \in K \\
0 & \mbox{ otherwise } 
\end{cases}$$

\

The map (\ref{diagLink}) is induced by the map of cochains dual to the inclusion.  Namely if $y_{\gamma}$ is a generator of the cochains of $N_{J_1 \cup J_2}$ then $\iota^*(y_{\gamma})$ is given by

\

$$ y_{\gamma} \mapsto 
\begin{cases}
y_{\gamma} & \mbox{ if } \gamma \in N_{J^{\prime}} \\
	0 & \mbox{ otherwise }
	\end{cases} $$

	\vspace {0.5 in}

{\bf Acknowledgments}. The first author was supported in part by a Rider University Summer Research
Fellowship and grant number 210386 from the Simons Foundation; the third author was supported partially by
DARPA grant number 2006-06918-01.

%\noindent{\bf Acknowledgements}.  The first author was supported in part by a Rider University Summer Research Fellowship
%and grant number 210386 from the Simons Foundation.
 %The third
%author was partially supported by DARPA grant number 2006-06918-01.
%The authors would all like to thank the IAS for their hospitality and fertile environment
%while this paper was in preparation.

\bibliographystyle{amsalpha}

\end{document}